\tikzset{
    vertex/.style={circle, fill=black, scale=0.2, outer sep=2mm},
    open-vertex/.style={circle, draw=black, scale=0.2, outer sep=2mm},
    forwards/.style={every edge/.append style={-stealth,bend left=20}},
    backwards/.style={every edge/.append style={stealth-,dashed,bend right=20}},
}
\tikzstyle{goto}=[->,shorten >=1pt,>=stealth,semithick]
\DeclareMathOperator{\coker}{coker}
\DeclareMathOperator{\Ad}{Ad}
\DeclareMathOperator{\id}{id}
\DeclareMathOperator{\sgn}{sgn}
\DeclareMathOperator{\Prob}{Prob}
\DeclareMathOperator{\diag}{diag}
\newcommand{\into}{\hookrightarrow}
\DeclarePairedDelimiter{\abs}{\lvert}{\rvert}
\DeclarePairedDelimiter{\norm}{\lVert}{\rVert}
\def\C{\mathbb{C}}
\def\N{\mathbb{N}}
\def\Z{\mathbb{Z}}
\def\Q{\mathbb{Q}}
\def\FF{\mathcal{F}}
\def\GG{\mathcal{G}}
\def\JJ{\mathcal{J}}
\def\KK{\mathcal{K}}
\def\VV{\mathcal{V}}
\newtheorem{thm}{Theorem}[section]
\newtheorem{lemma}[thm]{Lemma}
\newtheorem{prop}[thm]{Proposition}
\newtheorem{coro}[thm]{Corollary}
\theoremstyle{definition}
\newtheorem{defin}[thm]{Definition}
\newtheorem{eg}[thm]{Example}
\theoremstyle{remark}
\newtheorem{remark}[thm]{Remark}
\newtheorem{notation}[thm]{Notation}
\numberwithin{equation}{section}
\begin{document}

\date{\today}
\title[Group actions on multitrees and the $K$-theory of their crossed products]{Group actions on multitrees and the \\ $K$-theory of their crossed products}

\author[Brownlowe]{Nathan Brownlowe}
\author[Spielberg]{Jack Spielberg}
\author[Thomas]{Anne Thomas}
\author[Wu]{Victor Wu}

\address{Nathan Brownlowe, Anne Thomas and Victor Wu \\ School of Mathematics and
	Statistics  \\
	The University of Sydney} \email{nathan.brownlowe@sydney.edu.au, anne.thomas@sydney.edu.au}\email{viwu8694@uni.sydney.edu.au}
\address{Jack Spielberg, School of Mathematical and Statistical Sciences\\Arizona State University\\USA} \email{jack.spielberg@asu.edu}

%\thanks{The first author was supported by the Australian Research Council grant DP200100155. The last author was supported by an Australian Government Research Training Program Stipend Scholarship.}

\subjclass[2010]{}
\keywords{}
\thanks{}

\begin{abstract}
	We study group actions on multitrees, which are directed graphs in which there is at most one directed path between any two vertices. In our main result we describe a six-term exact sequence in $K$-theory for the reduced crossed product $C_0(\partial E)\rtimes_r G$ induced from the action of a countable discrete group $G$ on a row-finite, finitely-aligned multitree $E$ with no sources. We provide formulas for the $K$-theory of $C_0(\partial E) \rtimes_r G$ in the case where $G$ acts freely on $E$, and in the case where all vertex stabilisers are infinite cyclic. We study the action $G\curvearrowright  \partial E$ in a range of settings, and describe minimality, local contractivity, topological freeness, and amenability in terms of properties of the underlying data. In an application of our main theorem, we describe a six-term exact sequence in $K$-theory for the crossed product induced from a group acting on the boundary of an undirected tree. 
\end{abstract}

\maketitle

\setcounter{tocdepth}{1}

%\tableofcontents

\section{Introduction}\label{sec:intro}

Group actions on (undirected) trees have for many years provided interesting objects of study from the point of view of $C^*$-algebras. While the groups themselves give important case studies in $C^*$-simplicity \cite{dlHarpe-Preaux} and amenability \cite{Julg-Valette}, the action is best studied via a crossed product $C^*$-algebra. While crossed products by groups acting on trees have been studied in general \cite{Pimsner}, the induced action of the group on the \textit{boundary} of the tree on which it acts has been particulary fertile ground for crossed products, and examples such as the Cuntz--Krieger algebras of \cite{Spielberg,Robertson}, and the Bunce--Deddens algebras of \cite{Orfanos}, have benefited from a crossed-product model arising from such boundary actions.

Group actions on trees are also central to the field of geometric group theory. Here, they are studied using Bass--Serre theory, which establishes a bijection between group actions on trees and quotient structures called graphs of groups \cite{Serre, Bass}. In \cite{BMPST}, the authors established a $C^*$-algebraic Bass--Serre theory by producing a graph-of-groups $C^*$-algebra which is Morita equivalent to the crossed product induced from the corresponding  boundary action of the tree. Using this description, the authors were able to study free group actions on trees, and group actions with infinite cyclic stabiliser groups. The work in \cite{BMPST} follows earlier work on $C^*$-algebras associated to graphs of groups by Okayasu \cite{Okayasu} and undirected graphs (graphs of trivial groups) by Cornelissen, Lorscheid and Marcolli \cite{CLM}. Following \cite{BMPST}, Mundey and Rennie examined the $K$-theory of graph-of-groups $C^*$-algebras by first producing a Cuntz--Pimsner model for graph-of-groups $C^*$-algebras \cite{Mundey-Rennie}.

In this paper we study a more general type of action. A \textit{multitree} is a directed graph $E$ in which there is at most one directed path between any two vertices; these objects were introduced in \cite{multitrees}, and they have been studied in fields relating to computer science, such as data representation and complexity theory. We consider actions of a countable discrete group $G$ on a row-finite, finitely-aligned multitree $E$ which has no sources. Such an action $G\curvearrowright E$ induces an action of $G$ on the boundary $\partial E$, which is the totally disconnected locally compact Hausdorff space of shift-tail equivalent infinite paths in $E$. In this article we are interested in the problem of calculating the $K$-theory of the reduced crossed product $C_0(\partial E)\rtimes_r G$.

Our approach to this problem starts with the observation that under certain assumptions (such as $G$ being exact, or the action of $G$ on $E^0 \cup \partial E$ being amenable), we have a short exact sequence 
\[
\begin{tikzcd}
	0 \ar{r} &C_0(E^0)\rtimes_r G \arrow[r,"\iota"] & C_0(E^0 \cup \partial E)\rtimes_r G \ar{r} & C_0(\partial E) \rtimes_r G  \ar{r} &0
\end{tikzcd}
\]
giving rise to the six-term exact sequence in $K$-theory
\[\begin{tikzcd}
	K_0(C_0(E^0) \rtimes_r G)
	\arrow[r,"\iota_{*,0}"] &
	K_0(C_0(E^0 \cup \partial E) \rtimes_r G) \arrow[r] &
	K_0(C_0(\partial E) \rtimes_r G) \arrow[d] \\
	K_1(C_0(\partial E) \rtimes_r G) \arrow[u] &
	K_1(C_0(E^0 \cup \partial E) \rtimes_r G) \arrow[l] &
	K_1(C_0(E^0) \rtimes_r G).
	\arrow[l,"\iota_{*,1}"]
\end{tikzcd}\]
We then turn our attention to the $K$-groups $K_*(C_0(E^0) \rtimes_r G)$ and $K_*(C_0(E^0 \cup \partial E) \rtimes_r G)$. For this we use the results of Cuntz, Echterhoff and Li \cite{CEL}, in which they prove that if $G$ is a locally compact group acting on a totally disconnected locally compact space $\Omega$, and there is a $G$-invariant collection $\mathcal{V}=\{V_i:i\in I\}$ of compact open subsets of $\Omega$ satisfying certain properties, then the $K$-theory of the reduced crossed product $C_0(\Omega)\rtimes_r G$ decomposes as a direct sum of the $K$-theories of reduced group $C^*$-algebras
\begin{equation}\label{eq:celktheoryformula}
	K_*(C_0(\Omega)\rtimes_r G)\cong \bigoplus_{[i]\in G\setminus I}K_*(C^*_r(G_i)).
\end{equation}
In \cite{CEL} it is assumed that $\mathcal{V}$ is closed under intersections, which causes a problem in the setting of multitrees. The collection of cylinder sets $\{Z(v):v\in E^0\}$ forms a natural collection of compact open sets in $\Omega:=E^0\cup\partial E$ which is not closed under intersections, but is rather finitely aligned, in the sense that the intersection $Z(v)\cap Z(w)$ is a finite disjoint union of other cylinder sets. To overcome this problem we generalise (in the setting of discrete group actions) the results of \cite{CEL} to the setting of finitely-aligned $\mathcal{V}$, and in particular, we show that the decomposition in Equation~\eqref{eq:celktheoryformula} still holds in this more general setting. During the preparation of this paper, we learnt that Alistair Miller also generalised \cite{CEL} in his PhD thesis \cite{Miller-PhD}.

Using our generalised version of Equation~\eqref{eq:celktheoryformula} we are able to prove our main result: the six-term exact sequence in $K$-theory
\[
\begin{tikzcd}
	\displaystyle\bigoplus_{[v]\in G\setminus E^0}K_0(C^*_r(G_v)) \ar{r} & \displaystyle\bigoplus_{[v]\in G\setminus E^0}K_0(C^*_r(G_v)) \ar{r} & K_0(C(\partial E) \rtimes_r G) \ar{d} \\
	K_1(C(\partial E) \rtimes_r G)  \ar{u} & \displaystyle\bigoplus_{[v]\in G\setminus E^0}K_1(C^*_r(G_v)) \ar{l} & \displaystyle\bigoplus_{[v]\in G\setminus E^0}K_1(C^*_r(G_v)), \ar{l}
\end{tikzcd}
\]
where we give concrete formulae for the maps involved, which are induced from homomorphisms on the level of the underlying $C^*$-algebras.

We apply this six-term exact sequence to produce specific $K$-theory formulae for two classes of group actions on multitrees. We show that when $G$ acts freely on $E$, then 
\[K_0(C_0(\partial E) \rtimes_{\tau_\partial,r} G) \cong \coker(1-A^T) \quad \text{and} \quad K_1(C_0(\partial E) \rtimes_{\tau_\partial,r} G) \cong \ker(1-A^T),\]
where $A$ is the adjacency matrix of the quotient graph $G\backslash E$. In the case where the action $G\curvearrowright E$ produces infinite cyclic vertex stabiliser groups, and that $G\curvearrowright (E^0 \cup\partial E)$ is amenable, we show that
\[ K_0(C_0(\partial E) \rtimes_{r} G) \cong \coker(1-A_0) \oplus \ker(1-A_1) \]
and
\[ K_1(C_0(\partial E) \rtimes_{r} G) \cong \coker(1-A_1) \oplus \ker(1-A_0), \]
where $A_0$ and $A_1$ are integer-valued matrices depending on the transversal data obtained from the inclusions of edge stabilisers into vertex stabilisers. This can be applied for example to generalised Baumslag--Solitar groups acting on the boundary of directed trees.

Our work was initially motivated by the problem of calculating the $K$-theory of the crossed products arising from group actions on undirected trees, and we are able to use our results to answer this question. The link between multitrees and undirected trees is given by the dual graph construction: if $T$ is an undirected tree, then the dual graph $\widehat{T}$ is a multitree, and there is a natural isomorphism of crossed products $C_0(\partial T)\rtimes_r G\cong C_0(\partial\widehat{T})\rtimes_r G$. In the setting where all vertex stabilisers are amenable, this means we have a six-term exact sequence for the $K$-theory of the graph-of-groups $C^*$-algebras of \cite{BMPST}, and it connects our results to the $K$-theoretic results of Mundey and Rennie for graph-of-groups $C^*$-algebras given in \cite{Mundey-Rennie}. We do note that the two approaches are different; while in \cite{Mundey-Rennie} the maps in their six-term exact sequence are described in terms of the $KK$-classes of $C^*$-correspondences, we are able to give the more direct approach of having induced homomorphisms as the maps in our sequence. It is clear that in the setting of a free action, or when the vertex stabilisers are infinite cyclic, our results coincide. In more generality it is not so clear, and it would be an interesting problem to clarify the precise relationship between the two approaches. 

We start with preliminaries in Section~\ref{sec:prelim}, where we give the necessary definitions and results for group actions on graphs. We also recall the definition of the boundary of a locally finite, nonsingular tree, and its topology. In Section~\ref{sec:celktheory} we look at the results of \cite{CEL} on the $K$-theory of crossed products induced by group actions on totally disconnected spaces, and we generalise some of their results in the setting of discrete group actions. The main tool to come out of work in this section is the generalised version of Equation~\eqref{eq:celktheoryformula}. Section~\ref{sec:multitreektheory} is devoted to the main results of the paper, on the $K$-theory of the crossed product $C_0(\partial E)\rtimes_r G$ coming from a discrete group $G$ acting on a row-finite, finitely aligned multitree $E$ with no sources. We consider two classes of examples: when the action of $G$ on $E$ is free, and hence the vertex stabilisers are trivial; and when all vertex stabilisers are infinite cyclic. In both settings we give an explicit formula for the $K$-theory of $C_0(\partial E)\rtimes_r G$ in terms of the kernel and cokernel of a matrix (or matrices) associated to the underlying data. In Section~\ref{sec:undirectedtrees} we apply our $K$-theory results for multitrees to the dual of an undirected tree $T$ to get an analogous six-term exact sequence in $K$-theory for the crossed product from an action on the boundary of an undirected tree. 

We devote Section \ref{sec:propsofaction}, our final section, to studying dynamical properties of actions on multitrees. It is well known that dynamical properties of a discrete group $G$ acting on a locally compact Hausdorff space $X$ induce key structural properties of the corresponding reduced crossed product $C_0(X)\rtimes_r G$. For example, $C_0(X)\rtimes_r G$ is simple if the action is minimal and topologically free; a simple $C_0(X)\rtimes_r G$ is purely infinite if the action is locally contractive; and the full and reduced crossed products coincide if the action is amenable. For a countable discrete group $G$ acting on a multitree $E$, we:
\begin{itemize}
	\item characterise minimality of $G\curvearrowright \partial E$ in terms of the quotient directed graph $\Gamma:=G\backslash E$;
	\item provide a sufficient condition on $\Gamma$ and the stabiliser subgroups that ensures that $G \curvearrowright \partial E$ is locally contractive; 
	\item show that if $G\curvearrowright E^0$ is free, then $G\curvearrowright\partial E$ is topologically free if and only if $\Gamma$ is aperiodic;
	\item characterise topological freeness of $G\curvearrowright \partial E$ in terms of $\Gamma$ and the stabiliser subgroups in the case where these stabiliser subgroups are infinite cyclic; and
	\item in the case where $E$ is a directed tree $T_+$, show that if each stabiliser subgroup is amenable, then the action $G\curvearrowright \partial T_+$ is amenable.
\end{itemize}

\vspace{0.2cm}

\textbf{Acknowledgements.} The first author was supported by the Australian Research Council grant DP200100155. The last author was supported by an Australian Government Research Training Program Stipend Scholarship.

\section{Preliminaries on group actions on graphs}\label{sec:prelim}

\subsection{Graphs and trees}\label{subsec:graphsandtrees}

An \textit{undirected graph} $\Gamma=(\Gamma^0, \Gamma^1, r, s)$ (called simply a \textit{graph} in \cite{BMPST, Mundey-Rennie, Serre}) consists of countable sets of \textit{vertices} $\Gamma^0$ and \textit{edges} $\Gamma^1$ together with \textit{range} and \textit{source maps} $r, s \colon \Gamma^1 \to \Gamma^0$, as well as an involutive ``edge reversal" map $e \mapsto \overline{e}$ on $\Gamma^1$. We insist that $e \neq \overline{e}$ and $r(e) = s(\overline{e})$. We say that an undirected graph $\Gamma$ is \textit{locally finite} if $\abs{r^{-1}(v)} < \infty$ for all $v \in \Gamma^0$, and \textit{nonsingular} if $\abs{r^{-1}(v)} > 1$ for all $v \in \Gamma^0$. All undirected graphs in this paper will be locally finite and nonsingular.

A \textit{path (of length $n$)} in $\Gamma$ is either a vertex $v \in \Gamma^0$ (when $n=0$) or, if $n > 0$, a sequence of edges $e_1 e_2 \dots e_n$ with $s(e_i) = r(e_{i+1})$ for $1 \leq i \leq n-1$ (we note that we are using the right-to-left ``Australian" convention for notating paths, to be consistent with \cite{BMPST} and \cite{Mundey-Rennie}). A path $e_1 e_2 \dots e_n$ is \textit{reduced} if either $n=0$, or $e_{i+1} \neq \overline{e_i}$ for $1 \leq i \leq n-1$ (that is, there is no back-tracking in the path). We write $\Gamma^n$ (respectively, $\Gamma^n_r$) for the collection of all paths (respectively, all \textit{reduced} paths) of length $n$ in $\Gamma$, and we write $\Gamma^*_r$ for the collection of all reduced paths of any length in $\Gamma$. For any vertex $v \in \Gamma^0$, we write $v\Gamma^*_r$ (respectively $v\Gamma^n_r$) for the collection of reduced paths in $\Gamma$ (respectively, the collection of reduced paths of length $n$) with range $v$, and we write $\Gamma^*_rv$ (respectively $\Gamma^n_rv$) for the collection of reduced paths in $\Gamma$ (respectively, the collection of reduced paths of length $n$) with source $v$.

We call a graph $\Gamma$ \textit{connected} if for any two vertices $v, w \in \Gamma^0$ there is a path in $\Gamma$ between them. We call $\Gamma$ a \textit{tree} if for any two vertices $v, w \in \Gamma^0$, there is a unique reduced path from $v$ to $w$.

We will also be considering \textit{directed} graphs. A \textit{directed graph} $E=(E^0, E^1, r, s)$ consists of countable sets of vertices $E^0$ and edges $E^1$ together with range and source maps $r, s \colon E^1 \to E^0$, but without an edge reversal map. Equivalently, we can think of a directed graph $E$ as an undirected graph $\Gamma$ (called the \textit{underlying undirected graph} of $E$) equipped with an \textit{orientation}, which is a subset $\Gamma_+^1$ of $\Gamma^1$ which contains exactly one element of $\{e, \overline{e}\}$ for each $e \in \Gamma^1$; this orientation specifies a direction for each edge in the graph. A \textit{directed tree} is a directed graph whose underlying undirected graph is a tree.

Paths in a directed graph are defined and notated in the same way as for undirected graphs. If we form a directed graph from an orientation on an undirected graph, then paths can only consist of edges in the orientation. Note however that the concept of a reduced path is redundant for directed graphs, since the reversal of an edge in an orientation cannot be an element of that orientation. We will therefore omit the subscript `$r$' in the $\Gamma^*_r$ notation when $\Gamma$ is directed.

We say that a directed graph $E$ is \textit{row-finite} if $\abs{r^{-1}(v)} < \infty$ for all $v \in E^0$, and we say that a vertex $v \in E^0$ is a \textit{source} if $\abs{r^{-1}(v)} = 0$. All directed graphs in this paper will be row-finite and have no sources.

\subsection{Group actions on graphs}\label{subsec:groupactionsongraphs}

A group $G$ is said to \textit{act} (on the left) on a graph $\Gamma$ (undirected or directed) if it acts on the sets $\Gamma^0$ and $\Gamma^1$ in such a way that $r(g \cdot e) = g \cdot r(e)$, $s(g \cdot e) = g \cdot s(e)$ and (for undirected $\Gamma$) $\overline{g \cdot e} = g \cdot \overline{e}$ for all $e \in \Gamma^1$ and $g \in G$. For each $v \in \Gamma^0$, we write $G_v \leq G$ for the stabiliser subgroup of $G$ at $v$, and similarly for each $e \in \Gamma^1$ we write $G_e \leq G$ for the stabiliser subgroup of $G$ at $e$. These stabiliser subgroups have the following properties:

\begin{enumerate}[(1)]
	\item If $v, w \in \Gamma^0$ are in the same orbit under the action of $G$, then the subgroups $G_v$ and $G_w$ are conjugate. Indeed, if $g \in G$ satisfies $g \cdot v = w$, then $G_w = g G_v g^{-1}$. Similarly, if $e, f \in \Gamma^1$ are in the same orbit under the action of $G$, then the subgroups $G_e$ and $G_f$ are conjugate.
	
	\item For any $e \in \Gamma^1$, the group $G_e$ is a subgroup of both $G_{r(e)}$ and $G_{s(e)}$. This is because any automorphism of $\Gamma$ fixing $e$ must also fix both the vertices $r(e)$ and $s(e)$.
	
	\item By the orbit-stabiliser theorem, for any $e \in \Gamma^1$, the orbit $G_{r(e)}\cdot e$ is in one-to-one correspondence with the set of cosets $G_{r(e)}/G_e$. In particular, if $\Gamma$ is locally finite or row-finite, then $G_e$ is a finite-index subgroup of $G_{r(e)}$.
\end{enumerate}

If a group $G$ acts on the directed graph $E$, then we can also consider the \textit{quotient directed graph} $G\backslash E$ associated to this action, which is defined as follows. The vertices $(G\backslash E)^0$ are the vertex orbits $\{G \cdot v : v \in E^0\}$; the edges $(G\backslash E)^1$ are the edge orbits $\{G \cdot e : e \in E^1\}$; and the range and source maps are given by $r(G \cdot e) = G \cdot r(e)$ and $s(G \cdot e) = G \cdot s(e)$, respectively, for $e \in E^1$.

\subsection{Boundaries}\label{subsec:boundaries}

We recall the definition of the boundary of a locally finite, nonsingular tree.

An \textit{infinite path} in a graph $\Gamma$ (undirected or directed) is an infinite sequence of edges $e_1 e_2 \dots$ such that $s(e_i) = r(e_{i+1})$ for all $i \geq 1$. Like for finite paths, we call an infinite path $e_1 e_2 \dots$ \textit{reduced} if $e_{i+1} \neq \overline{e_i}$ for $i \geq 1$. We write $\Gamma^\infty$ (respectively, $\Gamma^\infty_r$) for the collection of all infinite paths (respectively, all reduced infinite paths) in $\Gamma$. For any $v \in \Gamma^0$, we write $v\Gamma^\infty_r$ for the collection of all reduced infinite paths with range $v$. For any infinite path $\lambda = e_1 e_2 \dots$ in $\Gamma$ and any integer $n \geq 1$, we write $\sigma^n(\lambda)$ for the infinite path $e_{n+1} e_{n+2} \dots$, and $\lambda_n$ for the finite path $e_1 \ldots e_n$.

If $T$ is a locally finite, nonsingular tree, then we can define its \textit{boundary} as follows. Define an equivalence relation on $T^\infty_r$ by $\lambda \sim \mu$ if and only if $\sigma^m(\lambda) = \sigma^n(\mu)$ for some $m,n \in \N$ (in this case we say that $\lambda$ and $\mu$ are \textit{shift-tail equivalent}). The \textit{boundary} of $T$ is the set of equivalence classes of reduced infinite paths in $T$, and is denoted $\partial T$. For $\lambda \in T^\infty_r$ we write $[\lambda]$ for its shift-tail equivalence class in $\partial T$.

We can put a topology on $\partial T$ in the following way. For a path $\alpha \in T^*_r$ of length $n$, we say that an infinite path $\lambda \in T^\infty_r$ \textit{extends} $\alpha$ if $\lambda_n = \alpha$, and we define the \textit{cylinder set} $Z_\partial(\alpha)$ to be the subset $\{[\lambda] : \lambda \in T^\infty_r\ \text{extends}\ \alpha\}$ of $\partial T$. The collection $\{Z_\partial(e) : e \in T^1\}$ is a base for a topology on $\partial T$, turning it into a totally disconnected compact Hausdorff space. 

If $T$ is directed, then the boundary is defined as above, and now $\{Z_\partial(v) : v \in T^0\}$ is a base for a totally disconnected locally compact Hausdorff topology.

An action of a group $G$ on a tree $T$ naturally induces an action of the same group on the boundary of tree, by $g \cdot [\lambda] = [g \cdot \lambda]$ for all $\lambda \in T^\infty_r$. In this paper, we are interested in understanding actions arising in this way.

\section{$K$-Theory of crossed products by actions on totally disconnected spaces}\label{sec:celktheory}

In this section, we generalise the $K$-theory results of \cite[Section~3]{CEL} so that we can apply them to the setting of group actions on multitrees in Section~\ref{sec:multitreektheory}.

Let $G$ be a second countable locally compact group acting continuously on a second countable totally disconnected locally compact space $\Omega$, with $\mathcal{V} = \{V_i : i \in I\}$ a $G$-invariant family of compact open sets in $\Omega$. The $G$-invariance of $\mathcal{V}$ means that $G$ acts on $I$ with $g\cdot V_i=V_{g\cdot i}$ for each $g\in G,i\in I$. Denote by $\tau$ the induced action of $G$ on $C_0(\Omega)$, and suppose that $\alpha$ is an action of $G$ on a separable $C^*$-algebra $A$. Under some additional assumptions, in \cite[Corollary~3.14]{CEL} Cuntz, Echterhoff and Li prove that there is an isomorphism
\[
K_*((A\otimes C_0(\Omega))\rtimes_{\alpha\otimes\tau,r} G) \cong \bigoplus_{[i] \in G\backslash I} K_*(A\rtimes_{\alpha,r}G_i),
\]
where $G_i \leq G$ is the stabiliser subgroup at $i \in I$. Our main result in this section is Theorem~\ref{thm:mainCELthm}, which establishes this isomorphism in a more general setting than the one considered in \cite{CEL}.

We start with a discussion of this more general setting in Section~\ref{subsec:finite alignment}, and in Section~\ref{subsec:results on projections} we prove a technical result which we then use in Section~\ref{subsec:CEL generalisation} to generalise the $K$-theoretic arguments in \cite{CEL}.

\subsection{Finite alignment} \label{subsec:finite alignment}
Let $X$ be a set, and $\VV$ a subset of the power set $\mathcal{P}(X)$. Following \cite[Definition~2.4]{CEL} we say $\VV$ is \textit{independent} if for all $V,V_1\dots,V_k \in \VV$ we have $V=\cup_{i=1}^kV_k\implies V=V_i$ for some $1\le i\le k$. We say $\VV$ is \textit{finitely aligned} if for all $V_i,V_j\in\VV$, the intersection $V_i\cap V_j$ can be written as a (possibly empty) finite disjoint union of other sets in $\VV$.

In this subsection, we study the setting of a group $G$ acting on a totally disconnected space $\Omega$ which admits a $G$-invariant, independent, finitely aligned generating family of compact open subsets. Our main result here is Proposition~\ref{prop:cofinality}, which is a technical result that we will use to prove Theorem~\ref{thm:mainCELthm}.

\begin{defin}
    Let $H$ be any subgroup of $G$.
    \begin{enumerate}[(1)]
        \item For any $J\subseteq I$ define $\VV_J:=\{V_i:i\in J\}$.
        \item Define $\FF_H:= \{F\subseteq I: F\text{ is finite and $H$-invariant}\}$.
        \item Define $\JJ_H := \{J\in \FF_H : \VV_J\text{ is finitely aligned}\}$.
    \end{enumerate}
\end{defin}

\begin{prop}\label{prop:cofinality}
	Let $G$ be a second countable locally compact group acting continuously on a second countable totally disconnected locally compact space $\Omega$. Let $I$ be a set, and let $\VV:=\{V_i : i \in I\}$ be a $G$-invariant collection of compact open subsets of $\Omega$ such that:
    \begin{enumerate}[(1)]
        \item $\VV$ generates the compact open sets of $\Omega$ (via finite unions, finite intersections, and differences);
        \item $\VV$ is independent; and 
        \item $\VV$ is finitely aligned.
    \end{enumerate}
    The set $\JJ_H$ is a cofinal subset of $(\FF_H,\subseteq)$. 
\end{prop}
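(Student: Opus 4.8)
The plan is to establish cofinality directly: given an arbitrary $F \in \FF_H$, I will produce a single $J \in \JJ_H$ with $F \subseteq J$. The construction will be a \emph{one-step} closure. For each nonempty $S \subseteq F$, the finite intersection $\bigcap_{i \in S} V_i$ is a finite disjoint union of sets in $\VV$ (finite alignment extends from pairs to arbitrary finite intersections by an easy induction on $|S|$). I let $J$ be $F$ together with all indices occurring in these decompositions as $S$ ranges over the nonempty subsets of $F$. Since $F$ is finite there are at most $2^{|F|}$ such $S$, each contributing finitely many indices, so $J$ is finite and contains $F$. Everything then reduces to checking that $\VV_J$ is finitely aligned and that $J$ is $H$-invariant.

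The crux, and where I expect the real work to lie, is an indivisibility lemma: if $W \in \VV$ is nonempty and $W \subseteq \bigsqcup_m V_m$ with the $V_m \in \VV$ pairwise disjoint, then $W \subseteq V_{m_0}$ for exactly one $m_0$. To prove it I would write $W = \bigsqcup_m (W \cap V_m)$ and expand each $W \cap V_m$ by finite alignment into a finite disjoint union of $\VV$-sets; this exhibits $W$ itself as a finite disjoint union of sets in $\VV$, so independence forces $W$ to coincide with a single one of them, which lies inside a single $V_{m_0}$. Uniqueness of $m_0$ is immediate from disjointness. This lemma is precisely the tool that prevents the naive closure from having to be iterated indefinitely.

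Granting the lemma, finite alignment of $\VV_J$ should follow in one step, which is the main obstacle it is designed to clear. Take $V_k, V_l \in \VV_J$, arising in the decompositions of $\bigcap_{i \in S} V_i$ and $\bigcap_{i \in T} V_i$ respectively (the elements of $F$ are the case $S = \{i\}$). Then $V_k \cap V_l \subseteq \bigcap_{i \in S \cup T} V_i = \bigsqcup_m V_m$, and since $S \cup T \subseteq F$ every such $m$ already lies in $J$ by construction. Applying the lemma to each piece $V_m$ (which sits inside both $\bigcap_{i\in S}V_i$ and $\bigcap_{i\in T}V_i$), I find that $V_m$ lies in exactly one piece of the $\bigcap_S$-decomposition and exactly one piece of the $\bigcap_T$-decomposition; comparing these with $V_k$ and $V_l$ shows each $V_m$ is either contained in $V_k \cap V_l$ or disjoint from it. Hence $V_k \cap V_l = \bigsqcup\{V_m : V_m \subseteq V_k \cap V_l\}$ is a finite disjoint union of members of $\VV_J$. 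The key point making one round enough is that $S \cup T$ is again a subset of $F$.

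It remains to secure $H$-invariance of $J$, for which I would use that the decompositions are canonical. The same lemma shows that any two disjoint $\VV$-decompositions of a fixed $\bigcap_{i\in S} V_i$ consist of the same sets, so the decomposition is unique. Because $F$ is $H$-invariant, $h \cdot \bigcap_{i\in S} V_i = \bigcap_{i \in h \cdot S} V_i$ with $h \cdot S \subseteq F$, and applying $h$ to the unique decomposition of $\bigcap_{i\in S}V_i$ yields the unique decomposition of $\bigcap_{i\in h\cdot S}V_i$; thus the index set $J$ is closed under the $H$-action. Combining finiteness, $H$-invariance, and finite alignment gives $J \in \JJ_H$ with $F \subseteq J$, proving cofinality. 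I anticipate the only genuinely delicate points to be the indivisibility lemma itself and the bookkeeping verifying that each witness index produced along the way really lands in $J$.
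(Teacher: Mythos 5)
Your proposal is correct and takes essentially the same route as the paper's proof: the identical one-step closure $J = \bigcup_{S \subseteq F} J_S$, with your indivisibility lemma playing exactly the role of the paper's uniqueness-of-decomposition lemma (Lemma~\ref{lem:uniquenessofintersections}) --- both rest on the same application of independence after expanding intersections via finite alignment, and the $H$-invariance argument via uniqueness is the same. The only cosmetic difference is in verifying finite alignment of $\VV_J$: you sort the pieces of the $\bigcap_{S\cup T}$-decomposition into those contained in or disjoint from $V_k \cap V_l$, whereas the paper matches two global decompositions of $\bigcap_{S\cup T}$ and extracts the pieces of $V_k\cap V_l$ from the match; these are interchangeable.
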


\begin{remark}
    Let $H$ be any compact subgroup of $G$. These assumptions on $\VV$ in the statement of Proposition~\ref{prop:cofinality} generalise the notion of a \textit{regular basis} in \cite[Definition~2.9]{CEL}, where instead of finite alignment (3), the authors assume that $\VV$ is closed under intersections (up to the empty set).
\end{remark}

To prove Proposition~\ref{prop:cofinality} we need the following lemma.

\begin{lemma}\label{lem:uniquenessofintersections}
	Let $F\subseteq I$ be finite. Then $\bigcap_{i\in F}V_i$ decomposes uniquely as a finite disjoint union of sets in $\VV$.
\end{lemma}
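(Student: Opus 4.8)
The plan is to establish existence and uniqueness separately. Existence I would handle by induction on $|F|$ using only finite alignment, while uniqueness is where independence does the real work. Throughout I would work with decompositions into \emph{nonempty} members of $\VV$ (empty terms contribute nothing and may be discarded), and treat the statement as being about nonempty $F$, since the empty intersection is all of $\Omega$, which need not be compact.

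For existence, when $|F|=1$ the intersection is a single $V_i\in\VV$, i.e.\ a trivial one-term disjoint union. For the inductive step, suppose $\bigcap_{i\in F}V_i=\bigsqcup_{j=1}^n W_j$ with the $W_j\in\VV$ pairwise disjoint, and pick $i_0\notin F$. Distributing intersection over the union gives $\bigcap_{i\in F\cup\{i_0\}}V_i=\bigsqcup_{j=1}^n(V_{i_0}\cap W_j)$, and these terms remain pairwise disjoint because the $W_j$ are. Applying finite alignment (hypothesis (3)) to each $V_{i_0}\cap W_j$ rewrites it as a finite disjoint union of members of $\VV$; since the pieces for different $j$ live inside the disjoint sets $W_j$, assembling them yields the required finite disjoint decomposition.

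For uniqueness, suppose $\bigsqcup_a U_a=\bigsqcup_b W_b=\bigcap_{i\in F}V_i$ are two such decompositions. Fix a term $U_a$ and intersect with the partition $\{W_b\}$ to get $U_a=\bigsqcup_b(U_a\cap W_b)$; finite alignment expresses each $U_a\cap W_b$ as a finite disjoint union of $\VV$-sets, so $U_a$ is a finite union of members of $\VV$. Now independence (hypothesis (2)) forces $U_a$ to equal one of these members, which lies inside some $W_{b_0}$, whence $U_a\subseteq W_{b_0}$. The symmetric argument gives $W_{b_0}\subseteq U_{a'}$ for some $a'$, so $U_a\subseteq W_{b_0}\subseteq U_{a'}$; disjointness of the $U$'s together with $U_a\neq\emptyset$ forces $a=a'$ and hence $U_a=W_{b_0}$. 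This sets up a bijection between the two families of sets, so they coincide.

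The main obstacle is the uniqueness half. Finite alignment by itself only guarantees that \emph{some} decomposition exists, and it is precisely the interplay of finite alignment with independence that pins it down: independence is exactly the property preventing a member of $\VV$ from being a nontrivial disjoint union of other members, which is what converts the containment $U_a\subseteq W_{b_0}$ (and its symmetric partner) into an equality of sets rather than a mere refinement.
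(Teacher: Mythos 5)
Your proof is correct and follows essentially the same route as the paper: existence by the standard induction on $|F|$ using finite alignment (which the paper only sketches), and uniqueness by fixing one term, intersecting it with the other partition, invoking independence to force containment in a single piece, and then using symmetry plus disjointness to upgrade the containment to equality. The only cosmetic difference is that you make explicit the handling of empty terms and the case $F=\emptyset$, which the paper leaves implicit.
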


\begin{proof}
	A standard inductive argument shows that $\VV$ finitely aligned means that the intersection of finitely many elements of $\VV$ decomposes as a (possibly empty) finite disjoint union of sets in $\VV$. So we just need to prove uniqueness. Suppose that
	\begin{equation}\label{eq:twodisjointunions}
		\bigcap_{i\in F}V_i = \bigsqcup_{i=1}^m W_i = \bigsqcup_{j=1}^n W'_j,
	\end{equation}
	where $m,n \geq 1$ and each $W_i, W'_j \in \mathcal{V}$. We want to prove that
    \[ \{W_1, \dots, W_m\} = \{W'_1, \dots, W'_n\}. \]
	
	Fix $i \in \{1, \dots, m\}$. By Equation~\eqref{eq:twodisjointunions}, we have that
	\[
	W_i = \bigsqcup_{j=1}^n (W_i \cap W'_j),
	\]
	where each $W_i \cap W'_j$ is a disjoint union of sets in $\mathcal{V}$. Then the independence of $\mathcal{V}$ implies that $W_i \subseteq W_i \cap W'_j$ for some $j$, which in turn implies that $W_i \subseteq W'_j$. By symmetry, there is some $k \in \{1, \dots, m\}$ such that $W'_j \subseteq W_k$, and so $W_i \subseteq W_k$. But then disjointness of the sets $W_1, \dots, W_m$ implies that we must have $i = k$, and so $W_i = W'_j$. This shows that $\{W_1, \dots, W_m\} \subseteq \{W'_1, \dots, W'_n\}$, and a symmetric argument gives equality.
\end{proof}

We are now able to prove Proposition~\ref{prop:cofinality}.

\begin{proof}[Proof of Proposition~\ref{prop:cofinality}]
	Fix $F\in\FF_H$. For each $Y\subseteq F$ we know from Lemma~\ref{lem:uniquenessofintersections} that there is a unique (finite) $J_Y\subseteq I$ with $\bigcap_{i\in Y}V_i=\bigsqcup_{j\in J_Y}V_j$. We claim that $J:=\bigcup_{Y\subseteq F}J_Y$ satisfies $F\subseteq J$ and $J\in \JJ_H$.
	
	To see that $F\subseteq J$, let $i\in F$ and $Y=\{i\}$. Then $J_Y=\{i\}$, and so $i\in J$. We now show that $J\in\JJ_H$. We first claim that $J$ is $H$-invariant. Fix $j\in J$ and $h\in H$. Let $Y\subseteq F$ with $j\in J_Y$, and consider $\bigcap_{i\in h\cdot Y}V_i$. Since $F$ is $H$-invariant, we know that $h\cdot Y\subseteq F$, and so 
	\[
	\bigcap_{i\in h\cdot Y}V_i=\bigsqcup_{k\in J_{h\cdot Y}}V_k.
	\]
	We also have 
	\[
	\bigcap_{i\in h\cdot Y}V_i=\bigcap_{i\in Y}V_{h\cdot i}=\bigcap_{i\in Y}h\cdot V_i=h\cdot\left(\bigcap_{i\in Y}V_i\right)=h\cdot\left(\bigsqcup_{k\in J_Y}V_k\right)=\bigsqcup_{k\in J_Y}h\cdot V_k.
	\]
	This gives two decompositions of $\bigcap_{i\in h\cdot Y}V_i$ as disjoint unions of sets in $\VV$; since by Lemma~\ref{lem:uniquenessofintersections} such a decomposition is unique, the set $h\cdot V_j$ appearing in the second decomposition must be equal to $V_k$ for some $k\in J_{h\cdot Y}$. Hence $h\cdot  j=k\in  J$, and so $J$ is $H$-invariant. Since $J$ is finite (being a finite union of finite sets), we have $J\in \FF_H$.
	
	To prove that $\VV_J$ is finitely aligned, we need to show that for any $i,j \in J$, there is $J'\subseteq J$ such that $V_i\cap V_j=\bigsqcup_{k\in J'}V_k$.
	
	Let $Y,Y'\subseteq F$ with $i\in J_Y$ and $j\in J_{Y'}$, and consider $\bigcap_{k\in Y\cup Y'}V_k$. We have 
	\[
	\bigcap_{k\in Y\cup Y'}V_k=\bigsqcup_{l\in J_{Y\cup Y'}}V_l.
	\]
	We also have
	\[
	\bigcap_{k\in Y\cup Y'}V_k =\left(\bigcap_{m\in Y}V_m\right)\cap\left(\bigcap_{m'\in Y'}V_{m'}\right)=\left(\bigsqcup_{n\in J_Y}V_n\right)\cap\left(\bigsqcup_{n'\in Y'}V_{n'}\right)=\bigsqcup_{n\in J_Y,n'\in J_{Y'}}(V_n\cap V_{n'}).
	\]
	For each $n\in J_Y$ and $n'\in J_{Y'}$, finite alignment ensures that there is a set $Y_{n,n'}\subseteq I$ with $V_n\cap V_{n'}=\bigsqcup_{p\in Y_{n,n'}}V_p$, so
    \[ \bigcap_{k\in Y\cup Y'}V_k = \bigsqcup_{\substack{n\in J_Y,n'\in J_{Y'} \\ p \in Y_{n,n'}}} V_p. \]
    So we have two decompositions of $ \bigcap_{k\in Y\cup Y'}V_k$ as disjoint unions of sets in $\VV$, and since by Lemma~\ref{lem:uniquenessofintersections} such a decomposition is unique, we get that
    \[ V_i\cap V_j = \bigsqcup_{p \in Y_{i,j}} V_p = \bigsqcup_{l\in J'\subseteq J_{Y\cup Y'}} V_l \]
    for some subset $J'\subseteq J_{Y\cup Y'}\subseteq J$. Hence $\VV_J$ is finitely aligned. 
\end{proof}

\subsection{Results on projections} \label{subsec:results on projections}

We know from \cite[Lemmas 2.2 and 2.3]{CEL} that studying sets of compact open subsets of a totally disconnected space which generate all compact open subsets is equivalent to studying sets of projections generating a commutative $C^*$-algebra. This subsection is dedicated to proving Proposition~\ref{prop:CEL iff}, which is a generalisation of \cite[Lemma~3.8]{CEL}, and uses the language of projections and $C^*$-algebras.

\begin{notation}
For $\{e_1,\dots,e_n\}$ a set of projections, for each $1\le i\le n$ we define 
\[
e'_i := e_i - \bigvee_{e_j < e_i} e_j.
\]
\end{notation}

\begin{prop} \label{prop:CEL iff}
	Let $\{e_1, \dots, e_n\}$ be a finite set of commuting projections generating a $C^*$-algebra $D$.
	\begin{enumerate}[(1)]
		\item The set $\{e'_1, \dots, e'_n\}$ is a family of non-zero, pairwise orthogonal projections linearly spanning $D$ if and only if $\{e_1, \dots, e_n\}$ is independent and $e_i e_j = \bigvee_{e_k \leq e_i e_j} e_k$ for any $i, j \in \{1, \dots, n\}$.
		\item In this case, the transition matrix $\Gamma=(\gamma_{ij})$ determined by the equations $e_j=\sum_{i=1}^n\gamma_{ij}e_i'$ is a unipotent, $\{0,1\}$-matrix, and hence invertible over $\mathbb{Z}$.
	\end{enumerate}
\end{prop}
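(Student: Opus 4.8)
The plan is to use that the $e_i$ commute, so that $D$ is a finite-dimensional commutative $C^*$-algebra whose projections form a Boolean lattice, with meet the product $e_ie_j$, join written $\vee$, and order $\le$. I will repeatedly use the elementary fact that for a family $\{e'_i\}$ of pairwise orthogonal projections and index sets $A,B$ one has $\bigl(\sum_{i\in A}e'_i\bigr)\vee\bigl(\sum_{i\in B}e'_i\bigr)=\sum_{i\in A\cup B}e'_i$, as well as the fact that an atom of the lattice lying below a join lies below one of the joined projections. I would prove the two implications of (1) separately and then read off (2) from the description of the coefficients $\gamma_{ij}$ produced along the way. Throughout, the $e_i$ are distinct, being a set.

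For the backward implication of (1), assume independence together with $e_ie_j=\bigvee_{e_k\le e_ie_j}e_k$. Independence gives $e'_i\neq0$: if $e'_i=0$ then $e_i=\bigvee_{e_k<e_i}e_k$, forcing $e_i=e_k$ for some $e_k<e_i$, which is absurd. The key computation is the cancellation lemma that $e'_ie_j=e'_i$ if $e_i\le e_j$ and $e'_ie_j=0$ otherwise. The first case is clear; for the second, put $p:=\bigvee_{e_k<e_i}e_k\le e_i$. Since $e_i\not\le e_j$ gives $e_ie_j<e_i$, every $e_k\le e_ie_j$ satisfies $e_k<e_i$, hence $e_k\le p$, so the hypothesis yields $e_ie_j\le p$; combined with $p\le e_i$ this gives $e_ie_j=p\wedge e_j=pe_j$, i.e.\ $e'_ie_j=e_ie_j-pe_j=0$. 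Pairwise orthogonality of the $e'_i$ then follows by splitting into the cases $e_i\le e_j$ (where $e_i<e_j$, so $e_i\le\bigvee_{e_k<e_j}e_k$ and hence $e'_je_i=0$) and $e_i\not\le e_j$ (where $e'_ie_j=0$); in each case $e'_ie'_j=0$. The cancellation lemma also shows $e'_i\le e_j\iff e_i\le e_j$, and I would then prove the telescoping identity $e_j=\sum_{e_i\le e_j}e'_i$ by induction on $|\{i:e_i\le e_j\}|$, splitting $e_j=e'_j+\bigvee_{e_k<e_j}e_k$ and applying the inductive hypothesis together with the join-of-subsums fact. This places every $e_j$, and hence all of $D$, inside $\spn\{e'_i\}$.

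For the forward implication the $e'_i$ are now $n$ nonzero pairwise orthogonal projections spanning the $n$-dimensional algebra $D$, hence are precisely its atoms, partitioning the unit. Independence follows exactly as above. The real content is the identity $e_ie_j=\bigvee_{e_k\le e_ie_j}e_k$; the inequality $\bigvee_{e_k\le e_ie_j}e_k\le e_ie_j$ is trivial, and for the reverse I would write $e_ie_j$ as the sum of the atoms below it and show that each such atom $e'_l$ already satisfies $e_l\le e_ie_j$, whence $e_ie_j=\sum_{e'_l\le e_ie_j}e'_l\le\bigvee_{e_k\le e_ie_j}e_k$. This reduces the statement to the implication that $e'_l\le e_i$ forces $e_l\le e_i$, which I expect to be the main obstacle. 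I would prove the equivalent transitivity statement ``$e'_m\le e_i\Rightarrow e_m\le e_i$'' by induction on $|\{m:e'_m\le e_i\}|$, using $e_m=\sum_{e'_r\le e_m}e'_r$: if $e'_m\le e_i$ with $m\neq i$, then since $e'_i$ is a single atom with $e'_i\not\le e_k$ for every $e_k<e_i$, the atom $e'_m$ lies below some generator $e_k<e_i$ having strictly fewer atoms beneath it, and the inductive hypothesis applied to $e_k$ gives $e_m\le e_k<e_i$. Feeding this back establishes condition~(b).

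Finally, for (2), multiplying $e_j=\sum_i\gamma_{ij}e'_i$ by $e'_i$ and using orthogonality gives $e'_ie_j=\gamma_{ij}e'_i$; as the left side is a projection and $e'_i\neq0$, we get $\gamma_{ij}\in\{0,1\}$, with $\gamma_{ij}=1$ exactly when $e'_i\le e_j$, that is (by the cancellation lemma) exactly when $e_i\le e_j$. Thus $\gamma_{ii}=1$, while for $i\neq j$ a nonzero entry forces the strict relation $e_i<e_j$. Relabelling the $e_i$ along a linear extension of $\le$ therefore makes $\Gamma$ upper triangular with unit diagonal, so $\Gamma$ is unipotent with $\det\Gamma=1$, and hence invertible over $\mathbb{Z}$.
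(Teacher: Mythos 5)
Your proof is correct, and while it follows the same overall skeleton as the paper's --- both directions of (1) pivot on the key lemma that $e'_a \leq e_i$ forces $e_a \leq e_i$, and (2) follows from $\gamma_{ij}=1 \iff e_i \leq e_j$ --- the technical execution differs genuinely at three points. First, the paper proves the key lemma (its Lemma~3.5) by expanding the orthogonality relations $e'_ie'_j=0$ into an explicit coefficient identity for $e_ie_j$ involving the auxiliary quantities $\mu_{ai} = \max\{\gamma_{ak} : e_k < e_i\}$, extracting from it an intermediate projection $e_j$ with $e'_a \leq e_j < e_i$, and then iterating until a strictly decreasing chain terminates; you instead get the intermediate projection directly from the lattice fact that an atom of $D \cong \C^n$ below a join lies below one of the joined projections, and close the argument by induction on the number of atoms below $e_i$. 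Your route is cleaner and avoids deriving the coefficient identity altogether, at the cost of invoking the Boolean-lattice structure of the finite-dimensional algebra (which your hypotheses do supply). Second, for the spanning claim in the backward direction, the paper argues that $\dim D \leq n$ by iteratively rewriting products of the $e_i$ as linear combinations; you prove the explicit telescoping identity $e_j = \sum_{e_i \leq e_j} e'_i$, which is more informative --- it exhibits the matrix $\Gamma$ and its $\{0,1\}$ entries as a byproduct, so part (2) falls out immediately. Third, for unipotence in (2) the paper defers to the arguments of \cite[Lemma~3.8]{CEL}, whereas your linear-extension/triangularity argument is self-contained. The one place where you are slightly glib is the claim that independence ``follows exactly as above'' in the forward direction: what you proved above is that independence implies $e'_i \neq 0$, and the forward direction needs the converse; but that converse is the same one-line computation (if $e_i = \bigvee_k e_{j_k}$ with every $e_{j_k} \neq e_i$ then $e_i \leq \bigvee_{e_k < e_i} e_k$, so $e'_i = 0$), and the paper itself dismisses this equivalence as holding ``essentially by definition,'' so this is a matter of wording rather than a gap.
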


To prove Proposition~\ref{prop:CEL iff}, we need the following lemma.

\begin{lemma} \label{lemma:CEL iff lemma}
	Let $\{e_1, \dots, e_n\}$ be a finite set of commuting projections generating a $C^*$-algebra $D$. Suppose that $\{e'_1, \dots, e'_n\}$ is a family of non-zero, pairwise orthogonal projections linearly spanning $D$. Then $e'_a \leq e_i\iff e_a \leq e_i$, for any $a, i \in \{1, \dots, n\}$.
\end{lemma}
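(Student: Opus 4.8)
The plan is to exploit the fact that the hypotheses force $D$ to be a finite-dimensional commutative $C^*$-algebra in which $\{e'_1,\dots,e'_n\}$ is precisely the set of minimal projections. Indeed, pairwise orthogonal nonzero projections are linearly independent, so $\{e'_1,\dots,e'_n\}$ is a basis of $D$, giving $D\cong\C^n$ with the $e'_j$ as its coordinate (minimal) projections. Consequently every projection of $D$, and in particular each generator $e_i$, is the sum of the minimal projections lying beneath it; writing $S_i:=\{j: e'_j\le e_i\}$ we have $e_i=\sum_{j\in S_i}e'_j$ and $i\in S_i$ (since $e'_i\le e_i$). Because $e'_a\le e_i$ is the same as $a\in S_i$, the lemma is equivalent to the assertion that $a\in S_i\iff e_a\le e_i$, and I will prove the slightly more flexible statement $j\in S_i\iff e_j\le e_i$ for all $i,j$.

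One implication is immediate: if $e_j\le e_i$ then $e'_j\le e_j\le e_i$, so $j\in S_i$. The substance is the converse. First I would translate the definition $e'_i=e_i-\bigvee_{e_k<e_i}e_k$ into combinatorial data about $S_i$. Since the join of commuting projections is the projection onto the union of their supports, $\bigvee_{e_k<e_i}e_k=\sum_{j\in\bigcup_{e_k<e_i}S_k}e'_j$, and subtracting this from $e_i=\sum_{j\in S_i}e'_j$ shows that $e'_i$ is the sum of the $e'_j$ with $j\in S_i\setminus\bigcup_{e_k<e_i}S_k$. As $e'_i$ is a single nonzero minimal projection, this set is exactly $\{i\}$; hence $S_i=\{i\}\cup\bigcup_{e_k<e_i}S_k$, with $i$ not lying in the union.

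With this recursion in hand, I would prove $j\in S_i\Rightarrow e_j\le e_i$ by strong induction on $\abs{S_i}$. If $\abs{S_i}=1$ then $S_i=\{i\}$ and $j=i$, so $e_j=e_i$. Otherwise, given $j\in S_i$ with $j\neq i$, the recursion provides a generator $e_k<e_i$ with $j\in S_k$; here $S_k\subseteq S_i$ (as $e_k\le e_i$) and $S_k\neq S_i$ (as $e_k\neq e_i$ and the $e'_j$ are linearly independent, so generators with equal support coincide), whence $\abs{S_k}<\abs{S_i}$. The inductive hypothesis gives $e_j\le e_k$, and then $e_j\le e_k<e_i$. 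Combining the two implications yields $j\in S_i\iff e_j\le e_i$, and specialising to $j=a$ gives $e'_a\le e_i\iff a\in S_i\iff e_a\le e_i$, as required.

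The main obstacle is the converse implication, and the crux is obtaining the clean recursion $S_i=\{i\}\cup\bigcup_{e_k<e_i}S_k$: this is exactly where the hypotheses that each $e'_i$ is nonzero and that the family is pairwise orthogonal are indispensable. Nonvanishing guarantees that $S_i\setminus\bigcup_{e_k<e_i}S_k$ is a single index rather than empty, and orthogonality (hence linear independence) is what makes $e_k<e_i$ entail the strict drop $\abs{S_k}<\abs{S_i}$ needed to run the induction; dropping either hypothesis breaks the argument.
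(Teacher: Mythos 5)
Your proof is correct. It shares the paper's overall skeleton---the easy implication is immediate, and the hard implication rests on a ``descent'' step (if $e'_a \leq e_i$ with $a \neq i$, then $e'_a \leq e_k$ for some generator $e_k < e_i$) followed by a finiteness argument---but you establish the descent step by a genuinely different and cleaner mechanism. The paper works coefficient-by-coefficient: it expands the products $e_i e_j$ using pairwise orthogonality of the $e'_a$, introduces the auxiliary quantities $\mu_{ai} = \max\{\gamma_{ak} : e_k < e_i\}$, and extracts the coefficient of $e'_a$ in $e_a e_i$ to force $\mu_{ai} = 1$; it then iterates to build a strictly decreasing chain $e'_a \leq e_b < \cdots < e_j < e_i$, which must terminate at $e_a$ since there are finitely many projections. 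You instead identify $D \cong \C^n$ with the $e'_j$ as the minimal (coordinate) projections, and read the support recursion $S_i = \{i\} \sqcup \bigcup_{e_k < e_i} S_k$ directly off the definition of $e'_i$, using that the join of commuting projections is the projection onto the union of supports; your strong induction on $\abs{S_i}$ then plays exactly the role of the paper's chain-termination argument (the two are equivalent well-foundedness arguments, and your $S_k \subsetneq S_i$ observation supplies the strict decrease). What your route buys is transparency: the structural picture makes the use of each hypothesis visible at a glance (non-vanishing of $e'_i$ makes $S_i \setminus \bigcup_{e_k<e_i} S_k$ nonempty, orthogonality gives the unique support decomposition and hence the strict drop), whereas the paper's computation stays at the level of the generators and never invokes the structure of $D$, at the cost of a more opaque calculation. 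Both arguments are complete; the one implicit step you should make explicit is the containment $\bigcup_{e_k < e_i} S_k \subseteq S_i$ (immediate from $e_k < e_i$), which justifies writing $e'_i$ as the sum over the set difference.
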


\begin{proof}
	The implication $e_a\le e_i\implies e'_a\le e_i$ is immediate. For the reverse implication, note that we can write each $e_i$ as $e_i=\sum_{a=1}^n\gamma_{ai}e_a'$, where for each $a=1, \dots, n$, we have $\gamma_{ai} = 1$ if $e'_a \leq e_i$ and $\gamma_{ai} = 0$ otherwise. Pairwise orthogonality of the $e'_i$ means that for each pair of distinct $i, j \in \{1, \dots, n\}$, we have
	\[
	0 = e'_i e'_j = \left( e_i - \bigvee_{e_k < e_i} e_k \right) \left(e_j - \bigvee_{e_l < e_j} e_l \right) 
	= e_i e_j - \bigvee_{e_l < e_j} e_i e_l - \bigvee_{e_k < e_i} e_j e_k + \bigvee_{\substack{e_k < e_i \\ e_l < e_j}} e_k e_l,
	\]
	and so
	\begin{align*}
		e_i e_j &= \bigvee_{e_l < e_j} e_i e_l + \bigvee_{e_k < e_i} e_j e_k - \bigvee_{\substack{e_k < e_i \\ e_l < e_j}} e_k e_l \\
		&= \bigvee_{e_l < e_j} \left(\sum_{a=1}^n \gamma_{ai}\gamma_{al}e'_a\right) + \bigvee_{e_k < e_i} \left(\sum_{a=1}^n \gamma_{aj}\gamma_{ak}e'_a\right) - \bigvee_{\substack{e_k < e_i \\ e_l < e_j}} \left(\sum_{a=1}^n \gamma_{ak}\gamma_{al}e'_a\right).
	\end{align*}
	If we define $\mu_{ai} := \max\{\gamma_{ak} : e_k < e_i\}$ for all $a, i \in \{1, \dots, n\}$, then
    \begin{equation} \label{eq:CEL iff lemma}
        e_i e_j = \sum_{a=1}^n \gamma_{ai}\mu_{aj}e'_a + \sum_{a=1}^n \gamma_{aj}\mu_{ai}e'_a - \sum_{a=1}^n \mu_{ai}\mu_{aj}e'_a 
        = \sum_{a=1}^n (\gamma_{ai}\mu_{aj} + \gamma_{aj}\mu_{ai} - \mu_{ai}\mu_{aj})e'_a.
    \end{equation}
	
	Now suppose that $e'_a \leq e_i$. If $a = i$ then $e_a \leq e_i$ trivially, so suppose that $a \neq i$. Multiplying both sides of the inequality $e'_a \leq e_i$ by $e_a$ gives that $e'_a = e_a e'_a \leq e_a e_i$. From Equation~\eqref{eq:CEL iff lemma}, this implies that
	\[\gamma_{aa}\mu_{ai} + \gamma_{ai}\mu_{aa} - \mu_{aa}\mu_{ai} = 1.\]
	Now, $\gamma_{aa} = 1$ by definition, since $e'_a \leq e_a$. We also claim that $\mu_{aa} = 0$. Indeed, the definition of $\mu_{aa}$ is
	\[\mu_{aa} = \max\{\gamma_{ak} : e_k < e_a\}.\]
	But if $e_k < e_a$, then by the definition of $e'_a$ we have that $e_k e'_a = 0$, and so $\gamma_{ak} = 0$. So $\mu_{aa} = 0$. This implies that
	\[(1)\mu_{ai} + \gamma_{ai}(0) - (0)\mu_{ai} = \mu_{ai} = 1.\]
	By definition of $\mu_{ai}$, this means that there is some $j$ such that
	\[e'_a \leq e_j < e_i.\]
	If $j = a$, then $e'_a \leq e_a < e_i$, and we are done. So suppose $j \neq a$. Then repeating the above argument with $j$ instead of $i$ yields some $k$ such that
	\[e'_a \leq e_k < e_j < e_i.\]
	Once again, if $k = a$, then $e'_a \leq e_a < \cdots < e_i$, and we are done. Otherwise, we can repeatedly apply the above argument, and at each step we have indices $j, k, \dots, b$ such that
	\[e'_a \leq e_b < \cdots < e_k < e_j < e_i.\]
	Now since there are a finite number of projections, this process must eventually terminate (otherwise we would get an infinite, strictly decreasing chain of projections, which is impossible). So at some point the chain will look like
	\[e'_a \leq e_a < \cdots < e_j < e_i,\]
	which shows that $e_a \leq e_i$ as desired.
\end{proof}

\begin{proof}[Proof of Proposition~\ref{prop:CEL iff}]
	For (1), we first assume that $\{e_1, \dots, e_n\}$ is independent and $e_i e_j = \bigvee_{e_k \leq e_i e_j} e_k$ for any $i, j \in \{1, \dots, n\}$. The independence of $\{e_1, \dots, e_n\}$ is equivalent to each $e'_i$ being non-zero, essentially by definition. Note also that for each $i \in \{1, \dots, n\}$, we have $e'_i \leq e_i$ (again, by definition). Fix distinct $i, j \in \{1, \dots, n\}$. Independence implies that $e_i \neq e_j$. If $e_j < e_i$, then by definition of $e'_i$, we get that $e'_i e_j = 0$, which implies that $e'_i e'_j = e'_i e_j e'_j = 0$. If $e_j$ is not less than $e_i$, then we must have $e_i e_j < e_j$. In this case,
	\[e_i e_j = \bigvee_{e_k \leq e_i e_j} e_k \leq \bigvee_{e_k < e_j} e_k,\]
	and multiplying through by $e'_j$ gives that $e_i e'_j < 0$, and hence $e_i e'_j = 0$. This in turns implies that $e'_i e'_j = 0$. Thus, $e'_1, \dots, e'_n$ are pairwise orthogonal. To see that $e'_1, \dots, e'_n$ linearly span $D$, it suffices to show that $\dim D \leq n$. For this, note that $e_ie_j$ is either $e_i$ or $e_j$, or is the join $\bigvee_{e_k < e_i,e_j} e_k$, which is a linear combination of products of the projections $e_1,\dots,e_n$. If we keep writing products as single projections or joins, then since there are only finitely many $e_1,\dots,e_n$, we eventually get $e_ie_j$ expressed as a linear combination of  these projections. An inductive argument gives that any finite product of the $e_i$'s can be written as a linear combination of the $e_i$'s, and it follows that $\dim D \leq n$.
	
	Now suppose that $\{e'_1, \dots, e'_n\}$ is a family of non-zero, pairwise orthogonal projections linearly spanning $D$. Then each $e_i e_j$, being a projection in $D$, can be expressed as a sum
	\[e_i e_j = \sum_{e'_a \leq e_i e_j} e'_a,\]
	and so Lemma~\ref{lemma:CEL iff lemma} implies that
	\[e_i e_j = \bigvee_{e'_a \leq e_i e_j} e_a,\]
	as required.
	
	For (2), we just note that Lemma~\ref{lemma:CEL iff lemma} gives $\gamma_{ij}=1\iff e_i\le e_j$, and then the arguments in the proof of \cite[Lemma~3.8]{CEL} give the result.
\end{proof}

\subsection{Results on $K$-theory} \label{subsec:CEL generalisation}

We now state and prove Theorem~\ref{thm:mainCELthm}, which is a generalisation of \cite[Corollary~3.14~and~Lemma~3.16]{CEL}.

\begin{notation}
    In the below statement, we denote by $\xi_i$ the standard basis element in $C_0(I)$, and by $e_{i,j}$ the rank-one partial isometry in $\KK(\ell^2(I))$ which sends $\xi_j$ to $\xi_i$. We denote by $u$ the canonical unitary representation of a group into any crossed product induced from an action of the group.
\end{notation}

\begin{thm}\label{thm:mainCELthm}
	Let $G$ be a second countable locally compact group acting continuously on a second countable totally disconnected locally compact space $\Omega$, and denote by $\tau$ the induced action of $G$ on $C_0(\Omega)$.  Let $A$ be a separable $C^*$-algebra, and $\alpha$ an action of $G$ on $A$. Suppose that $\mathcal{V}=\{V_i:i\in I\}$ is a $G$-invariant collection of compact open subsets of $\Omega$ that generates the compact open subsets of $\Omega$, and is independent and finitely aligned. Suppose that $G$ satisfies the Baum--Connes conjecture with coefficients in $A\otimes C_0(I)$ and $A\otimes C_0(\Omega)$. Then there is a homomorphism 
	\[
	\bigoplus_{[i]\in G\backslash I} A\rtimes_{\alpha,r}G_i \to \KK(\ell^2(I))\otimes\big((A\otimes C_0(\Omega))\rtimes_{\alpha\otimes\tau,r}G\big),
	\]
	which, when restricted to the $[i]$ summand, satisfies $au_g\mapsto  e_{i, i}\otimes ((a\otimes 1_{V_i})u_g)$ for all $a\in A,g\in G_i$, where $G_i \leq G$ is the stabiliser subgroup at $i \in I$, under the action of $G$ on $I$ satisfying $g\cdot V_i = V_{g \cdot i}$ for all $g\in G, i\in I$. Moreover, this homomorphism induces an isomorphism 
	\[
	\bigoplus_{[i] \in G\backslash I} K_*(A\rtimes_{\alpha,r}G_i)\cong K_*((A\otimes C_0(\Omega))\rtimes_{\alpha\otimes\tau,r} G) ,
	\]
	which, when restricted to the $[i]$ summand, is the map in $K$-theory induced from the inclusion $A\rtimes_{\alpha,r}G_i\to (A\otimes C_0(\Omega))\rtimes_{\alpha\otimes\tau,r}G,\, au_g\mapsto (a\otimes 1_{V_i})u_g$. When $A=\C$, we get an isomorphism 
	\[
	\bigoplus_{[i] \in G\backslash I} K_*(C^*_r(G_i))\cong K_*(C_0(\Omega)\rtimes_{\tau,r} G).
	\]
\end{thm}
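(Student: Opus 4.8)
The plan is to follow the strategy of \cite{CEL}, realising $C_0(\Omega)$ as a $G$-equivariant inductive limit of finite-dimensional subalgebras, decomposing the crossed product of each finite-dimensional piece over orbit representatives, and passing to the limit in $K$-theory; the new ingredient is that finite alignment replaces the intersection-closedness of \cite{CEL}, which is absorbed via the transition matrices of Proposition~\ref{prop:CEL iff}.

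I would first construct the homomorphism. For $i\in I$ and $g\in G_i$ the set $V_i$ is $G_i$-invariant, since $g\cdot V_i=V_{g\cdot i}=V_i$, so $1_{V_i}$ is fixed by $\tau|_{G_i}$ and the assignment $au_g\mapsto e_{i,i}\otimes((a\otimes 1_{V_i})u_g)$ extends to a $*$-homomorphism $A\rtimes_{\alpha,r}G_i\to\KK(\ell^2(I))\otimes((A\otimes C_0(\Omega))\rtimes_{\alpha\otimes\tau,r}G)$, with multiplicativity following from $u_g(b\otimes 1_{V_i})u_g^*=\alpha_g(b)\otimes 1_{V_i}$. Choosing one representative per orbit $[i]\in G\backslash I$, the images lie in the mutually orthogonal corners cut by the $e_{i,i}$, so these maps assemble into a single $*$-homomorphism out of $\bigoplus_{[i]\in G\backslash I}A\rtimes_{\alpha,r}G_i$.

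For the isomorphism, by Proposition~\ref{prop:cofinality} the set $\JJ_G$ is cofinal in $\FF_G$, and since $\VV$ generates the compact open sets I can write $C_0(\Omega)=\varinjlim_{J\in\JJ_G}D_J$ and $C_0(I)=\varinjlim_{J\in\JJ_G}C_0(J)$ as $G$-algebras, where $D_J:=\spn\{1_{V_i}:i\in J\}$. For fixed $J\in\JJ_G$, finite alignment within $J$ together with independence lets me apply Proposition~\ref{prop:CEL iff} to the projections $\{1_{V_i}:i\in J\}$: the reduced projections $e'_i$ are nonzero, pairwise orthogonal and span $D_J$, and since $g\cdot 1_{V_i}=1_{V_{g\cdot i}}$ one checks $g\cdot e'_i=e'_{g\cdot i}$, so $G$ permutes the minimal projections exactly as it acts on $J$. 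Hence $D_J\cong C_0(J)$ $G$-equivariantly (via $e'_i\leftrightarrow\delta_i$), and the decomposition of $(A\otimes C_0(J))\rtimes_{\alpha\otimes\tau,r}G$ over orbit representatives gives
\[
K_*\big((A\otimes D_J)\rtimes_{\alpha\otimes\tau,r}G\big)\cong\bigoplus_{[i]\in G\backslash J}K_*(A\rtimes_{\alpha,r}G_i),
\]
as in \cite[Lemma~3.16]{CEL}. This decomposition, and the passage to the inductive limit below, are carried out on the Baum--Connes left-hand side and transported to the reduced crossed products using that the assembly map is an isomorphism for the coefficients $A\otimes C_0(I)$ and $A\otimes C_0(\Omega)$; this is the only place the hypothesis enters. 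Taking the limit and using continuity of $K$-theory, with the connecting map for $J\subseteq J'$ being an inclusion of summands, yields $\bigoplus_{[i]\in G\backslash I}K_*(A\rtimes_{\alpha,r}G_i)\cong K_*((A\otimes C_0(\Omega))\rtimes_{\alpha\otimes\tau,r}G)$.

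I expect the main obstacle to be matching the concrete homomorphism, written using the idempotents $1_{V_i}$, with the abstract decomposition isomorphism, which is naturally phrased in terms of the minimal projections $e'_i$. The resolution is Proposition~\ref{prop:CEL iff}(2): the transition matrix between $\{1_{V_i}\}$ and $\{e'_i\}$ is unipotent and invertible over $\Z$, so the change of basis induces an isomorphism in $K$-theory and using $1_{V_i}$ in place of $e'_i$ still produces an isomorphism; verifying that these transition matrices are compatible with the connecting maps of the inductive system is the most delicate point, and is exactly where the finitely aligned setting departs from \cite{CEL}. Finally, specialising to $A=\C$ gives $A\rtimes_{\alpha,r}G_i=C^*_r(G_i)$ and $(A\otimes C_0(\Omega))\rtimes_{\alpha\otimes\tau,r}G=C_0(\Omega)\rtimes_{\tau,r}G$, with the $\KK(\ell^2(I))$ factor absorbed by stability of $K$-theory, yielding the stated isomorphism $\bigoplus_{[i]\in G\backslash I}K_*(C^*_r(G_i))\cong K_*(C_0(\Omega)\rtimes_{\tau,r}G)$.
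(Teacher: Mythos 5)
Your construction of the homomorphism is fine, and you have correctly identified the two ingredients this paper adds to \cite{CEL}: the cofinality statement (Proposition~\ref{prop:cofinality}) and the transition-matrix statement (Proposition~\ref{prop:CEL iff}). The gap is in how you deploy them: you run the finite-subset/inductive-limit argument over $\FF_G$ and $\JJ_G$, i.e.\ over finite \emph{$G$-invariant} subsets of $I$, for the full group $G$. When $G$ is noncompact --- which is the case of interest, e.g.\ a countable infinite discrete group acting on a multitree --- there are typically no nonempty finite $G$-invariant subsets of $I$ at all: no finite invariant set can meet an infinite orbit, and for a free action of an infinite group one has $\FF_G=\{\emptyset\}$. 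So the identifications $C_0(I)=\varinjlim_{J\in\JJ_G}C_0(J)$ and $C_0(\Omega)=\varinjlim_{J\in\JJ_G}D_J$ are false in general (the right-hand sides can be zero), and the rest of your argument has nothing to take a limit over.

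The missing idea is the reduction to \emph{compact} subgroups, which is exactly where the Baum--Connes hypothesis does its work. The paper follows the two-step structure of \cite{CEL}: in Step 1, the orbit decomposition $\bigoplus_{[i]}K_*(A\rtimes_{\alpha,r}G_i)\cong K_*((A\otimes C_0(I))\rtimes_{\alpha\otimes\mu,r}G)$ is taken over from \cite[Remark~3.15 and Lemma~3.16]{CEL} for the full group $G$ directly, with no finite-subset machinery; in Step 2, the comparison of $C_0(I)$ with $C_0(\Omega)$ (Theorem~\ref{thm:CELmap2}) is reduced via \cite[Proposition~3.3]{CEL} --- a Going-Down argument, which is what actually consumes the Baum--Connes assumption with coefficients in $A\otimes C_0(I)$ and $A\otimes C_0(\Omega)$ --- to showing that $K_*(C_0(J)\rtimes H)\to K_*(C_0(\Omega)_J\rtimes H)$ is an isomorphism for every \emph{compact} subgroup $H\le G$ and every $J$ in a cofinal subset of $\FF_H$. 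For compact $H$ every $H$-orbit is finite, so $\FF_H$ genuinely exhausts $I$; this is why Proposition~\ref{prop:cofinality} is stated for a subgroup $H$ and applied to compact ones. Once you restrict to compact $H$ and $J\in\JJ_H$, your remaining steps (the $H$-equivariant identification of $D_J$ with $C_0(J)$ via the minimal projections $e_i'$, and Proposition~\ref{prop:CEL iff}(2) giving a unipotent, $\Z$-invertible transition matrix so that working with the $1_{V_i}$ instead of the $e_i'$ still induces a $K$-theory isomorphism) are the correct replacement for \cite[Lemma~3.8]{CEL} and match the paper; the ``delicate compatibility'' with connecting maps that you flag is then handled by \cite[Lemma~3.6]{CEL} rather than by hand.
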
 

We denote by $\mu$ the action of $G$ on $C_0(I)$ induced from the action of $G$ on $I$ described in Theorem~\ref{thm:mainCELthm}. Since we are generalising \cite[Corollary~3.14]{CEL}, we first discuss its proof. The proof is broken into two main parts:
\begin{itemize}[leftmargin=5em]
	\item[(Step 1)] the homomorphism $\bigoplus_{[i]\in G\backslash I} A\rtimes_{\alpha,r}G_i \to (A\otimes C_0(I))\rtimes_{\alpha\otimes\mu,r}G$ (called $\eta$ in \cite[Remark~3.15]{CEL}), which induces an isomorphism
	\[
	\bigoplus_{[i]\in G\backslash I} K_*(A\rtimes_{\alpha,r}G_i)\cong K_*((A\otimes C_0(I))\rtimes_{\alpha\otimes\mu,r}G);
	\]
	and
	\item[(Step 2)] the homomorphism $(A\otimes C_0(I))\rtimes_{\alpha\otimes\mu,r}G\to \KK(\ell^2(I))\otimes\big((A\otimes C_0(\Omega))\rtimes_{\alpha\otimes\tau,r}G\big)$ (which, up to isomorphism of the codomain, is the map $\psi$ from \cite[Remark~3.15]{CEL}), which induces an isomorphism 
	\[
	K_*((A\otimes C_0(I))\rtimes_{\alpha\otimes\mu,r}G) \cong K_*((A\otimes C_0(\Omega))\rtimes_{\alpha\otimes\tau,r} G).
	\]
\end{itemize}
The result in (Step 1) applies in our setting, but we require some adjustments to prove (Step 2) with a finitely-aligned $\VV$. 

The proof of (Step 2) in \cite{CEL} proceeds as follows. The authors first use \cite[Proposition~3.3]{CEL} to reduce the problem to actions involving compact subgroups $H$ of $G$. Then in \cite[Lemma~3.6]{CEL} (see also \cite[Remark~3.7]{CEL} for the translation between $H$-equivariant $K$-theory and the $K$-theory of the corresponding crossed products by $H$), the problem is further reduced to considering finite subsets of $\VV$. In particular, for $J \in \FF_H$ (recall that $\FF_H$ is the set of all finite $H$-invariant subsets of $I$), and $C_0(\Omega)_J:=C^*(\{1_{V_j}:j\in J\})\subseteq C_0(\Omega)$, if the maps $K_*(C_0(J)\rtimes H)\to K_*(C_0(\Omega)_J\rtimes H)$ are isomorphisms for every $J$ in some cofinal subset of the set $\FF_H$ ordered by inclusion, then there is an isomorphism 
\[
K_*((A\otimes C_0(I))\rtimes_{\alpha\otimes\mu}H)\cong K_*((A\otimes C_0(\Omega))\rtimes_{\alpha\otimes\tau}H).
\]
Finally, the key step in getting isomorphisms $K_*(C_0(J)\rtimes H)\cong K_*(C_0(\Omega)_J\rtimes H)$ for every compact $H$ and every $J \in \FF_H$, is the following lemma:
\begin{lemma}[Lemma 3.8 of \cite{CEL}] \label{CEL 3.8}
	Let $D$ be a commutative $C^*$-algebra generated by a multiplicatively closed (up to 0) and independent finite family of projections $\{p_i : i \in I\}$. For each $i \in I$ let $p'_i := p_i - \bigvee_{p_j < p_i} p_j$. Then $\{p'_i\}$ is a family of nonzero pairwise orthogonal projections spanning $D$. The transition matrix $\Gamma = (\gamma_{ij})$ determined by the equations $p_j = \sum_{i \in I} \gamma_{ij} p'_i$ is unipotent and therefore invertible over $\Z$.
\end{lemma}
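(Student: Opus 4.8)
The plan is to recognise this lemma as the special case of \proref{prop:CEL iff} in which the finitely-aligned family is in fact closed under products (up to $0$), and then to deduce it from the machinery already established earlier. Writing $p_i$ for the projections to match the statement, the hypothesis that $\{p_i\}$ is \emph{multiplicatively closed up to $0$} says precisely that for each pair $i,j$ the product $p_ip_j$ is either $0$ or equals some $p_m$. First I would check that this forces the join condition $p_ip_j=\bigvee_{p_k\le p_ip_j}p_k$ appearing in \proref{prop:CEL iff}(1): if $p_ip_j=p_m$, then $p_m$ is itself the largest member of the family dominated by $p_ip_j=p_m$, so the join equals $p_m=p_ip_j$; and if $p_ip_j=0$, the join is an empty (or all-zero) join, which is again $0$. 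Combined with the assumed independence, this verifies the right-hand side of the equivalence in \proref{prop:CEL iff}(1).

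Applying \proref{prop:CEL iff}(1) then immediately yields the first assertion, namely that $\{p'_i\}$ is a family of non-zero, pairwise orthogonal projections linearly spanning $D$. For the statement about the transition matrix I would argue directly from \lemref{lemma:CEL iff lemma} rather than invoke \proref{prop:CEL iff}(2), whose proof refers back to the very argument in question. Since the $p'_i$ are orthogonal, non-zero, and span $D$, they form a basis, so expanding the projection $p_j=\sum_i\gamma_{ij}p'_i$ and multiplying by $p'_a$ forces $\gamma_{ij}\in\{0,1\}$ with $\gamma_{ij}=1\iff p'_i\le p_j$; then \lemref{lemma:CEL iff lemma} upgrades this to $\gamma_{ij}=1\iff p_i\le p_j$. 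In particular every diagonal entry is $\gamma_{ii}=1$.

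To conclude unipotence and invertibility over $\Z$, I would choose a linear extension of the partial order $\le$ on the (distinct) projections $p_i$ and relabel the indices accordingly. With respect to this ordering, $\gamma_{ij}\neq 0$ implies $p_i\le p_j$ and hence that $i$ precedes $j$, so $\Gamma$ is triangular; together with the unit diagonal this shows $\Gamma$ is unipotent, whence $\det\Gamma=1$ and $\Gamma^{-1}$ has integer entries.

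The honest assessment is that there is no serious obstacle: all of the combinatorial content has already been extracted into \lemref{lemma:CEL iff lemma} and \proref{prop:CEL iff}, and the task reduces to a hypothesis translation together with a standard triangularisation. The only points demanding a little care are the degenerate cases in the join condition (zero products and empty joins) and the tacit assumption that the $p_i$ are distinct, so that $\le$ is a genuine partial order admitting a linear extension; both are routine to handle.
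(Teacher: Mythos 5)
Your proof is correct, but note that the paper itself never proves this statement: it is quoted verbatim from \cite{CEL} as background, and the paper's own contribution is the generalisation \proref{prop:CEL iff}, which \emph{replaces} this lemma in the finitely aligned setting (see the proof of \thmref{thm:CELmap2}). Your argument runs the logic in the other direction, specialising \proref{prop:CEL iff} back to the multiplicatively closed case, and the hypothesis translation you carry out (closure up to $0$ forces $p_ip_j=\bigvee_{p_k\le p_ip_j}p_k$, with the empty join covering $p_ip_j=0$) is exactly what makes that specialisation legitimate. Your decision to avoid \proref{prop:CEL iff}(2) is the key point of care: the paper's proof of that part explicitly defers the unipotence argument back to ``the arguments in the proof of [CEL, Lemma~3.8]'', so invoking it here would have been circular. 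Instead you reconstruct the missing argument yourself: orthogonality and spanning of the $p'_i$ give $\gamma_{ij}\in\{0,1\}$ with $\gamma_{ij}=1\iff p'_i\le p_j$, \lemref{lemma:CEL iff lemma} upgrades this to $\gamma_{ij}=1\iff p_i\le p_j$, and a linear extension of the partial order on the (distinct) $p_i$ makes $\Gamma$ triangular with unit diagonal, hence unipotent and invertible over $\Z$; this is precisely the argument that \cite{CEL} supplies and that the paper leaves implicit. What your route buys is a derivation that is self-contained within the paper's framework; what the paper's citation buys is only brevity. The degenerate points you flag (zero products, distinctness of the $p_i$) are genuine but routine, and are glossed over equally by the paper and by \cite{CEL}.
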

In \cite[Appendix A]{CEL} the authors show how the matrix $\Gamma$ gives rise to a class $x_\Gamma \in KK^H(C_0(J), C_0(\Omega)_J)$, and prove that $\Gamma$ being invertible implies that $x_\Gamma$ is invertible. Remark 3.9 of \cite{CEL} then explains how $x_\Gamma$ being invertible ensures that the maps $K_*(C_0(J)\rtimes H)\to K_*(C_0(\Omega)_J\rtimes H)$ are isomorphisms; this argument relies on the entries of $\Gamma$ being elements of $\{0,1\}$, which is shown in the proof of \cite[Lemma~3.8]{CEL}.

We now state and prove our replacement of (Step 2) for our more general setting. This, when combined with (Step 1), gives Theorem~\ref{thm:mainCELthm}.

\begin{thm}\label{thm:CELmap2}
	Let $G$ be a second countable locally compact group acting continuously on a second countable totally disconnected locally compact space $\Omega$, and denote by $\tau$ the induced action of $G$ on $C_0(\Omega)$.  Let $A$ be a separable $C^*$-algebra, and $\alpha$ an action of $G$ on $A$. Suppose that $\mathcal{V}=\{V_i:i\in I\}$ is a $G$-invariant collection of compact open subsets of $\Omega$ that generates the compact open subsets of $\Omega$, and is independent and finitely aligned. Denote by $\mu$ the action of $G$ on $C_0(I)$ induced from the action of $G$ on $I$ satisfying $g\cdot V_i = V_{g \cdot i}$ for all $g\in G, i\in I$. Suppose that $G$ satisfies the Baum--Connes conjecture with coefficients in $A\otimes C_0(I)$ and $A\otimes C_0(\Omega)$. Then there is a homomorphism 
	\[
	(A\otimes C_0(I))\rtimes_{\alpha\otimes\mu,r}G\to \KK(\ell^2(I))\otimes\big((A\otimes C_0(\Omega))\rtimes_{\alpha\otimes\tau,r}G\big)
	\]
	satisfying $(a\otimes\xi_i)u_g\mapsto e_{i,g^{-1}\cdot i}\otimes ((a\otimes 1_{V_i})u_g)$ for all $a\in A,g\in G, i\in I$, which induces an isomorphism
	\[
	K_*((A\otimes C_0(I))\rtimes_{\alpha\otimes\mu,r} G) \cong K_*((A\otimes C_0(\Omega))\rtimes_{\alpha\otimes\tau,r} G).
	\]
\end{thm}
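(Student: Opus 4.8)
The plan is to separate the proof into two parts: the construction of the homomorphism, and the verification that it induces the claimed $K$-theory isomorphism. The first part is a direct computation; the second part mirrors (Step~2) of \cite{CEL}, but with their intersection-closed hypothesis replaced by finite alignment, fed through the two technical results \proref{prop:cofinality} and \proref{prop:CEL iff} that we have already established.

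First I would define the map on the dense $*$-subalgebra $C_c(G,A\otimes C_0(I))$ by the stated formula on generators $(a\otimes\xi_i)u_g$ and verify that it is a $*$-homomorphism. The essential inputs are the two covariance relations $u_g(b\otimes\xi_j)=(\alpha_g(b)\otimes\xi_{g\cdot j})u_g$ and $u_g(b\otimes 1_{V_j})=(\alpha_g(b)\otimes 1_{V_{g\cdot j}})u_g$, together with $\xi_i\xi_{g\cdot j}=\delta_{i,g\cdot j}\xi_i$ and the matrix-unit rule $e_{i,g^{-1}\cdot i}e_{j,h^{-1}\cdot j}=\delta_{g^{-1}\cdot i,j}\,e_{i,(gh)^{-1}\cdot i}$. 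A short check then confirms that both sides of the multiplicativity identity vanish unless $i=g\cdot j$ and coincide when $i=g\cdot j$ (using $1_{V_i}1_{V_{g\cdot j}}=1_{V_i}$ in that case), while the identity $g\cdot(g^{-1}\cdot i)=i$ yields $*$-compatibility. The map extends to the full crossed product by universality, and I would then check it intertwines the canonical conditional expectations onto the coefficient algebras, so that it descends to $(A\otimes C_0(I))\rtimes_{\alpha\otimes\mu,r}G$.

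For the $K$-theory statement I would follow the reduction scheme of \cite{CEL}. By \cite[Proposition~3.3]{CEL} it suffices to prove the isomorphism after restricting to the compact subgroups $H\le G$, and by \cite[Lemma~3.6]{CEL} together with \cite[Remark~3.7]{CEL} it is enough to show that the maps $K_*(C_0(J)\rtimes H)\to K_*(C_0(\Omega)_J\rtimes H)$ are isomorphisms for all $J$ in some cofinal subset of $(\FF_H,\subseteq)$. The point of departure from \cite{CEL} is that their argument forces $J$ to be closed under intersections, whereas here I would take the cofinal family to be $\JJ_H$, which \proref{prop:cofinality} guarantees is cofinal. For each $J\in\JJ_H$ the projections $\{1_{V_j}:j\in J\}$ form a finite, commuting, independent, finitely aligned family, so that $1_{V_i}1_{V_j}=\bigvee_{1_{V_k}\le 1_{V_i}1_{V_j}}1_{V_k}$; hence \proref{prop:CEL iff} applies and produces a unipotent $\{0,1\}$-valued transition matrix $\Gamma$, invertible over $\Z$. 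As in \cite[Appendix~A]{CEL}, $\Gamma$ determines a class $x_\Gamma\in KK^H(C_0(J),C_0(\Omega)_J)$ which is invertible because $\Gamma$ is, and the $\{0,1\}$-entry condition lets \cite[Remark~3.9]{CEL} conclude that the induced maps in $K$-theory are isomorphisms. Assembling these over $\JJ_H$ and then over the compact subgroups $H$ yields the desired isomorphism.

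The main obstacle I anticipate is not any single computation but keeping the interface clean: the whole CEL machinery (the $KK^H$-class, its invertibility, and the passage to $K$-theory) rests on the transition matrix having entries in $\{0,1\}$ and on the indexing family being cofinal, so the two substitutions---replacing ``closed under intersections'' by finite alignment, and replacing the intersection-closed cofinal family by $\JJ_H$---must be made consistently. Concretely, the linchpin is verifying that finite alignment of $\VV_J$, combined with independence via \lemref{lem:uniquenessofintersections}, genuinely yields the join identity $1_{V_i}1_{V_j}=\bigvee_{1_{V_k}\le 1_{V_i}1_{V_j}}1_{V_k}$, since this is exactly the hypothesis that makes \proref{prop:CEL iff} applicable to each $J\in\JJ_H$ and lets the remainder of the \cite{CEL} argument run unchanged.
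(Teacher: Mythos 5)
Your proposal is correct and takes essentially the same route as the paper: the paper's proof likewise runs the reduction scheme of \cite{CEL} (compact subgroups via \cite[Proposition~3.3]{CEL}, finite invariant index sets via \cite[Lemma~3.6 and Remark~3.7]{CEL}, and the $KK^H$-class machinery of \cite[Appendix~A and Remark~3.9]{CEL}), substituting the cofinal family $\JJ_H$ from \proref{prop:cofinality} and part~(2) of \proref{prop:CEL iff} for the intersection-closed cofinal family and \cite[Lemma~3.8]{CEL}. The extra details you supply --- the explicit check that the generator formula defines a $*$-homomorphism descending to the reduced crossed product, and the verification via \lemref{lem:uniquenessofintersections} that finite alignment yields the join identity needed to invoke \proref{prop:CEL iff} --- are left implicit in the paper (the former via \cite[Remark~3.15]{CEL}), but they do not alter the argument.
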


\begin{proof}
    The above discussion of the proof of \cite[Corollary~3.14]{CEL} means we need to provide: a cofinal subset of $\FF_H$ such that for every $J$ in that subset, the maps $K_*(C_0(J)\rtimes H)\to K_*(C_0(\Omega)_J\rtimes H)$ are isomorphisms; and a suitable replacement for Lemma~\ref{CEL 3.8}. For this we take $\JJ_H$ from Proposition~\ref{prop:cofinality}, and part (2) of Proposition~\ref{prop:CEL iff}.
\end{proof}

This completes the proof of Theorem~\ref{thm:mainCELthm}.

\section{Group actions on multitrees}\label{sec:multitreektheory}

Our main application of Theorem~\ref{thm:mainCELthm} is in studying the $K$-theory of the reduced crossed product of a group acting on the boundary of a certain type of directed graph (a \textit{finitely aligned multitree}). These graphs generalise directed trees and also allow us to study group actions on the boundaries of \textit{undirected} trees, which will be discussed in Section~\ref{sec:undirectedtrees}.

We start with a discussion of multitrees in Section~\ref{subsec:multitrees}, and then in Section~\ref{subsec:multitree K-theory} we explain how Theorem~\ref{thm:mainCELthm} can be applied to this setting. Following this, we give formulae for the $K$-theory of the crossed product in the special cases where the group acts freely on the multitree (Section~\ref{subsec:trivialvertexstabs}) and where the vertex stabilisers of the action are infinite cyclic (Section~\ref{subsec:icvertexstabs}).

\subsection{Multitrees} \label{subsec:multitrees}

In this subsection, we give a definition of multitrees, and prove some useful results about their structure.

\begin{defin}
	A \textit{multitree} is a directed graph $E = (E^0, E^1, r, s)$ such that for any two vertices $v, w \in E^0$, there is at most one (directed) path from $w$ to $v$ (or equivalently, at most one directed path between $v$ and $w$). This condition ensures that there is a partial order on the vertex set $E^0$ defined by $v \leq w$ if and only if there is a directed path from $w$ to $v$. We call a multitree $E$ \textit{finitely aligned} if for any pair of vertices $v, w \in E^0$, the collection $v\vee w$ of minimal common upper bounds of $v$ and $w$, under the partial order described above, is finite.
\end{defin}

Multitrees have also been called \textit{strongly unambiguous graphs} or \textit{mangroves} in \cite{mangroves}.
\vspace{1ex}
\begin{eg}[Example of a multitree]~
    \[\begin{tikzpicture}
        \foreach \i in {0,1,2,3} {
            \node[vertex] (a\i) at (\i,0) {};
            %\node at (\i,-0.3) {$\vdots$};
        }
        
        \foreach \i in {0,1,2} {
            \node[vertex] (b\i) at (\i+0.5,0.8) {};
            \node[vertex] (c\i) at (\i+0.5,1.6) {};
            %\node at (\i+0.5,2.9) {$\vdots$};
        }
        
        \path (b1) -- (c1) node[midway,vertex,white] (m) {};
        
        \foreach \i in {0,1,2,3,4,5}
        \node[vertex] (d\i) at (\i/2+0.25,2.4) {};
        
        \draw[-stealth]
        (b0) edge (a0)
        (b0) edge (a1)
        (b1) edge (a1)
        (b1) edge (a2)
        (b2) edge (a2)
        (b2) edge (a3)
        
        (c0) edge (b0)
        (c0) edge (b2)
        (c1) edge[-] (m)
        (m) edge (b1)
        (c2) edge[-] (m)
        (m) edge (b0)
        (c2) edge (b2)
        
        (d0) edge (c0)
        (d1) edge (c0)
        (d2) edge (c1)
        (d3) edge (c1)
        (d4) edge (c2)
        (d5) edge (c2);
    \end{tikzpicture}\]
\end{eg}

In this paper, we restrict our attention to multitrees $E$ which are row-finite, finitely aligned and have no sources. We can define the boundary $\partial E$ of a multitree $E$ in the same way as for directed trees (as described in Section~\ref{subsec:boundaries}), and it will also be a totally disconnected locally compact Hausdorff space. Given an action of a countable discrete group $G$ on $E$, we wish to apply Theorem~\ref{thm:mainCELthm} to calculate the $K$-theory of the reduced crossed product induced from the action of $G$ on the boundary $\partial E$. However, this boundary will not have a suitable collection of compact open subsets, so in order to apply the theorem, we will need to enlarge the space with the vertices $E^0$.

\begin{defin} \label{def:Omega}
	Suppose that $E$ is a row-finite, finitely aligned multitree without sources, and consider the space $E^0\cup\partial E$. We define the cylinder set of a vertex $v \in E^0$ to be $Z(v) := \{w \in E^0 : vE^*w \neq \emptyset\} \cup Z_{\partial}(v)$. The collection $\{\{v\} : v \in E^0\} \cup \{Z(v) : v \in E^0\}$ forms a basis for a topology of compact open sets turning $E^0 \cup \partial E$ into a totally disconnected locally compact Hausdorff space.
\end{defin}

The following proposition will be useful for Section~\ref{subsec:multitree K-theory}.

\begin{prop} \label{prop:multitree cylinder intersection}
	Let $E$ be a row-finite, finitely aligned multitree with no sources. For any two vertices $v, w \in E^0$, we have that
	\[Z(v) \cap Z(w) = \bigsqcup_{u\in v\vee w} Z(u).\]
\end{prop}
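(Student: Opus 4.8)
The plan is to establish the two inclusions separately, together with the disjointness of the sets $Z(u)$ on the right-hand side, after first recording a monotonicity property of the cylinder sets. The key observation is that $Z(v)\cap E^0=\{u\in E^0: v\leq u\}$ is the up-set of $v$ (since $vE^*u\neq\emptyset$ is precisely $v\leq u$), and that $v\mapsto Z(v)$ is order-reversing: if $v\leq u$ then $Z(u)\subseteq Z(v)$. For the vertex part this is just transitivity of $\leq$; for the boundary part, given $[\lambda]\in Z_\partial(u)$ with a representative $\mu$ of range $u$, one prepends a path $\tau\in vE^*u$ (which exists precisely because $v\leq u$) to obtain $\tau\mu\sim\mu$ of range $v$, so that $[\lambda]\in Z_\partial(v)$. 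I would prove this monotonicity first, as a standalone claim, since both inclusions lean on it.

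The inclusion $\bigsqcup_{u\in v\vee w}Z(u)\subseteq Z(v)\cap Z(w)$ is then immediate: each $u\in v\vee w$ is a common upper bound, so $v\leq u$ and $w\leq u$ give $Z(u)\subseteq Z(v)$ and $Z(u)\subseteq Z(w)$. To see that the union is disjoint, I would suppose that distinct $u,u'\in v\vee w$ share a point of $Z(u)\cap Z(u')$. Using monotonicity together with the shift-tail analysis below, this point yields a vertex $y$ with $u\leq y$ and $u'\leq y$ (if the shared point is itself a vertex, take $y$ to be that vertex). Since also $v\leq u$ and $v\leq u'$, both $u$ and $u'$ lie on the unique path in $vE^*y$, hence are comparable; minimality then forces $u=u'$, a contradiction. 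This is exactly the place where the multitree hypothesis (uniqueness of directed paths) is essential.

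For the reverse inclusion $Z(v)\cap Z(w)\subseteq\bigsqcup_{u\in v\vee w}Z(u)$ I would split into a vertex case and a boundary case. If $x\in E^0$ lies in $Z(v)\cap Z(w)$, then $x$ is a common upper bound of $v$ and $w$; the interval $\{y: v\leq y\leq x\}$ is finite (in a multitree it consists exactly of the vertices on the unique path in $vE^*x$), so among the common upper bounds below $x$ there is a minimal one $u$, which is then minimal among all common upper bounds, i.e. $u\in v\vee w$, and $u\leq x$ gives $x\in Z(u)$. The boundary case is the \emph{main obstacle}: given $[\lambda]\in Z_\partial(v)\cap Z_\partial(w)$, there are representatives $\mu,\mu'$ of $[\lambda]$ with ranges $v$ and $w$, and shift-tail equivalence provides $m,n$ with $\nu:=\sigma^m(\mu)=\sigma^n(\mu')$. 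Writing $y:=r(\nu)$, the truncations $\mu_m$ and $\mu'_n$ witness $v\leq y$ and $w\leq y$, so $y$ is a common upper bound, $[\lambda]=[\nu]\in Z_\partial(y)$, and one descends to a minimal $u\in v\vee w$ with $u\leq y$ exactly as in the vertex case; monotonicity then gives $[\lambda]\in Z_\partial(u)\subseteq Z(u)$.

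Finally I would note where the hypotheses enter: the multitree property is used twice, through finiteness of order intervals (to descend to a minimal common upper bound) and through uniqueness of paths (for disjointness), while finite alignment ensures $v\vee w$ is finite, so that the decomposition is a genuine finite disjoint union—precisely what is needed for $\{Z(v):v\in E^0\}$ to be a finitely aligned family in the sense of Section~\ref{subsec:finite alignment}. The only genuinely delicate step is the alignment-by-shifting in the boundary case; everything else reduces to transitivity, prepending of connecting paths, and the descent argument.
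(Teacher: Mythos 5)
Your proposal is correct and follows essentially the same route as the paper: your monotonicity claim is the paper's Lemma~\ref{lemma:multitree cylinder sets}, the forward inclusion is handled by the same vertex/boundary case split (common tail via shift-tail equivalence, then descent to a minimal common upper bound), and disjointness rests on the uniqueness of directed paths. The only cosmetic differences are that you derive disjointness by placing both minimal upper bounds on the unique path in $vE^*y$ and invoking minimality, where the paper instead exhibits two distinct paths from a common upper bound to $v$ and contradicts the multitree property directly, and that you justify the descent step by finiteness of order intervals, a detail the paper leaves implicit.
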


We will use the following lemma in the proof of Proposition~\ref{prop:multitree cylinder intersection}.

\begin{lemma} \label{lemma:multitree cylinder sets}
	Let $E$ be a row-finite, finitely aligned multitree with no sources, and let $v, w \in E^0$. The following are equivalent:
	\begin{enumerate}[(i)]
		\item $v \leq w$;
		\item $Z(w) \subseteq Z(v)$;
		\item $w \in Z(v)$.
	\end{enumerate}
\end{lemma}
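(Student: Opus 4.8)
The plan is to prove the three-way equivalence by establishing the cyclic chain of implications $(i)\implies(ii)\implies(iii)\implies(i)$, since this is the most economical route and each single arrow should be short given the definitions in hand. Throughout I will use the definition of the cylinder set $Z(v) = \{w \in E^0 : vE^*w \neq \emptyset\} \cup Z_\partial(v)$ together with the partial order $v \leq w \iff$ there is a directed path from $w$ to $v$, and the multitree property that such a path, when it exists, is unique.

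\medskip

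\textbf{The $(i)\implies(ii)$ step.} Assuming $v \leq w$, I would take an arbitrary point of $Z(w)$ and show it lies in $Z(v)$, splitting into the two cases according to whether the point is a vertex or a boundary point. If $u \in E^0$ with $u \in Z(w)$, then there is a path from $w$ to $u$; since $v \leq w$ there is a path from $w$ to $v$, and concatenating (or rather, observing that the unique path from $w$ to $u$ must pass through $v$, or composing paths appropriately) yields a path from $v$ to $u$, so $u \in Z(v)$. The cleanest formulation uses transitivity of $\leq$: from $u \leq w$ and the hypothesis one wants $u \leq v$, but care is needed about direction — I expect the right statement is that a path from $w$ to $u$ combined with a path from $w$ to $v$, using uniqueness of paths in the multitree, forces $v$ to lie on the path from $w$ to $u$, giving a path from $v$ to $u$. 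For the boundary points, $[\lambda] \in Z_\partial(w)$ means $\lambda$ extends a path with range $w$; prepending the path from $w$ to $v$ shows $[\lambda]$ extends a path with range $v$, so $[\lambda] \in Z_\partial(v) \subseteq Z(v)$.

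\medskip

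\textbf{The $(ii)\implies(iii)$ step.} This is essentially immediate once I verify that $w \in Z(w)$, since then $Z(w) \subseteq Z(v)$ gives $w \in Z(v)$. That $w \in Z(w)$ holds because the length-zero path at $w$ witnesses $wE^*w \neq \emptyset$, so $w$ belongs to the vertex part of its own cylinder set.

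\medskip

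\textbf{The $(iii)\implies(i)$ step.} Here $w \in Z(v)$, and since $w$ is a vertex it cannot lie in the boundary part $Z_\partial(v)$, so it must satisfy $w \in \{u \in E^0 : vE^*u \neq \emptyset\}$, i.e.\ there is a directed path from $v$ to $w$. Unwinding the definition of the partial order (a path from $v$ to $w$ means $w \leq v$, or from $w$ to $v$ means $v \leq w$ depending on convention), this yields exactly $v \leq w$. The main thing to get right throughout is the directional bookkeeping of the partial order against the range/source conventions, since the "Australian" right-to-left path convention and the definition $v \leq w \iff$ path from $w$ to $v$ interact in a way that is easy to transpose by accident; the genuine content of the lemma is light, and the only real care needed is ensuring each implication respects the correct orientation of paths and uses the multitree uniqueness property exactly where concatenation of paths must be identified with an existing path.
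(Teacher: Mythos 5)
Your overall architecture --- the cycle (i)$\implies$(ii)$\implies$(iii)$\implies$(i), the vertex/boundary case split in the first implication, $w \in Z(w)$ via the length-zero path, and the unwinding of $vE^*w \neq \emptyset$ at the end --- is exactly the paper's proof, and your (ii)$\implies$(iii), (iii)$\implies$(i) and boundary-point steps are fine. The genuine problem is the vertex case of (i)$\implies$(ii), where the formulation you settle on is wrong in two compounding ways. First, the directional bookkeeping: a vertex $u$ lies in $Z(w)$ iff $wE^*u \neq \emptyset$, i.e.\ iff there is a path with range $w$ and source $u$ (in the paper's language, a path \emph{from $u$ to $w$}), which says $w \leq u$ --- not ``$u \leq w$'' as you write; likewise the goal $u \in Z(v)$ is $v \leq u$, not $u \leq v$. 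Second, and more seriously, the mechanism you declare to be ``the right statement'' --- that a path from $w$ to $u$ together with a path from $w$ to $v$, via uniqueness of paths in the multitree, ``forces $v$ to lie on the path from $w$ to $u$'' --- is false under any consistent reading of the conventions: $v$ does not lie on the path joining $u$ and $w$ (that path terminates at $w$, and $v$ lies strictly beyond $w$ when $v \neq w$), and uniqueness of paths cannot force any such thing.

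The correct argument is the plain concatenation that you mention first and then drift away from: from $v \leq w$ take $\lambda \in vE^*w$, and from $u \in Z(w)$ take $\mu \in wE^*u$; then $s(\lambda) = w = r(\mu)$, so $\lambda\mu \in vE^*u$, hence $u \in Z(v)$. This is nothing but transitivity of $\leq$ in the form $v \leq w \leq u$; it is the \emph{concatenated} path from $u$ to $v$ that passes through $w$, not the path from $u$ to $w$ that passes through $v$. This is exactly what the paper does (and the same concatenation $\lambda\nu$, together with shift-tail equivalence $[\lambda\nu]=[\nu]$, handles the boundary case, as you correctly describe). The multitree uniqueness property plays no role anywhere in this lemma; it is only needed later, for the disjointness assertion in Proposition~\ref{prop:multitree cylinder intersection}.
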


\begin{proof}
	We show that (i) $\implies$ (ii) $\implies$ (iii) $\implies$ (i).
	
	First suppose (i), so $v \leq w$. This means that there is a path $\lambda \in vE^*w$. Now take $x \in Z(w)$. We treat the two cases $x \in E^0$ and $x \in \partial E$ separately. If $x \in E^0$, then by definition of $Z(w)$, there is a path $\mu \in wE^*x$. So $\lambda\mu \in vE^*x$, which implies that $x \in Z(v)$. If $x \in \partial E$, then $x = [\nu]$ for some $\nu \in wE^\infty$. So $\lambda\nu \in vE^\infty$, and so $[\lambda\nu] = [\nu] = x$ is again in $Z(v)$. This shows that $Z(w) \subseteq Z(v)$, so (ii) holds.
	
	The implication (ii) $\implies$ (iii) follows immediately from the fact that $w \in Z(w)$. Finally, suppose (iii), so $w \in Z(v)$. Since $w \in E^0$, this means that there is a path $\lambda \in vE^*w$, which exactly means that $v \leq w$, which is (i).
\end{proof}

\begin{proof}[Proof of Proposition~\ref{prop:multitree cylinder intersection}]
	Fix $v, w \in E^0$. First we show that
	\begin{equation} \label{eq:multitree cylinder intersection nondisjoint}
		Z(v) \cap Z(w) = \bigcup_{u\in v\vee w} Z(u).
	\end{equation}
	For the backwards inclusion, fix a minimal common upper bound $u$ of $v$ and $w$. Since both $v \leq u$ and $w \leq u$, Lemma~\ref{lemma:multitree cylinder sets} implies that $Z(u) \subseteq Z(v) \cap Z(w)$. So the right-hand side of Equation~\eqref{eq:multitree cylinder intersection nondisjoint} is also contained in $Z(v) \cap Z(w)$.
	
	For the forwards inclusion, take $x \in Z(v) \cap Z(w)$. We treat the two cases $x \in E^0$ and $x \in \partial E$ separately. First suppose that $x \in E^0$. Then by Lemma~\ref{lemma:multitree cylinder sets}, we have that $v \leq x$ and $w \leq x$. So $x$ is a common upper bound of $v$ and $w$, which means that there must be some minimal common upper bound $u$ of $v$ and $w$ such that $u \leq x$. Then Lemma~\ref{lemma:multitree cylinder sets} gives that $x \in Z(u)$. Now suppose that $x = [\lambda] \in \partial E$, for some $\lambda \in E^\infty$. This means that there are infinite paths $\lambda_v \in vE^\infty$ and $\lambda_w \in wE^\infty$ such that $\lambda$, $\lambda_v$ and $\lambda_w$ are shift-tail equivalent to each other. So there is an infinite path $\mu \in E^\infty$ and finite paths $\nu_v \in vE^*r(\mu)$ and $\nu_w \in wE^*r(\mu)$ such that $\lambda_v = \nu_v \mu$ and $\lambda_w = \nu_w \mu$. Then the vertex $r(\mu)$ is a common upper bound of $v$ and $w$, and by the same argument as in the first case, there is some minimal common upper bound $u$ of $v$ and $w$ such that $u \leq r(\mu)$. Let $\nu_u$ be the path from $r(\mu)$ to $u$. Then $\nu_u\mu \in uE^\infty$ is shift-tail equivalent to $\lambda$ and so $x = [\lambda] = [\nu_u\mu]$ lies in $Z(u)$. This proves the forwards inclusion and hence we have shown Equation~\eqref{eq:multitree cylinder intersection nondisjoint}.
	
	It remains to prove that the union $\cup_u Z(u)$ is disjoint. Suppose for a contradiction that there are two distinct minimal common upper bounds $u_1$ and $u_2$ of $v$ and $w$ such that $Z(u_1) \cap Z(u_2) \neq \emptyset$. Equation~\eqref{eq:multitree cylinder intersection nondisjoint}, with $u_1$ and $u_2$ in place of $v$ and $w$ respectively, implies that $u_1$ and $u_2$ have a (minimal) common upper bound $u' \in E^0$. So there are paths $\lambda_1 \in u_1E^*u'$ and $\lambda_2 \in u_2E^*u'$. Also, since $u_1$ and $u_2$ are both upper bounds of $v$, there are paths $\mu_1 \in vE^*u_1$ and $\mu_2 \in vE^*u_2$. So then we have two paths $\mu_1\lambda_1$ and $\mu_2\lambda_2$ which both go from $u'$ to $v$. Moreover, the paths $\mu_1\lambda_1$ and $\mu_2\lambda_2$ must be distinct, since otherwise $u_1$ and $u_2$ would lie on the same directed path and would therefore be comparable, contradicting the fact that $u_1$ and $u_2$ are distinct minimal common upper bounds of $u$. But since there are two distinct paths from $u'$ to $v$, this contradicts the fact that $E$ is a multitree, and so the intersection $Z(u_1) \cap Z(u_2)$ must be empty. This shows that the union $\cup_u Z(u)$ is disjoint.
\end{proof}

\subsection{$K$-theory} \label{subsec:multitree K-theory}

Suppose a group $G$ acts on a row-finite, finitely aligned multitree $E$ with no sources. Our main result of this section is Theorem~\ref{thm:multitree sequence}, which describes the $K$-theory of the reduced crossed product of $G$ acting on the boundary of $E$. Before we can state the theorem, we introduce some notation.

\begin{notation}\label{not:thetaetc}
  In this subsection, we will always write $\Omega$ for the topological space $E^0 \cup \partial E$ from Definition~\ref{def:Omega}. We will write $\tau$ for the induced action of $G$ on $C_0(\Omega)$, and we will write $\tau_0$ and $\tau_\partial$ for the restriction of $\tau$ to $C_0(E^0)$ and $C_0(\partial E)$ respectively. For each $v \in E^0$, we write $\xi_v \in C_0(E^0)$ for the indicator function of the singleton set $\{v\}$, and we write $1_{Z(v)} \in C_0(\Omega)$ for the indicator function of the set $Z(v) \subseteq \Omega$.

  For each $e \in E^1$, we will fix some transversal $\Sigma_e$ for  $G_{r(e)}/G_e$ containing the identity element $1_{G_{r(e)}}$ (which is just $1_G$). Note that $\Sigma_e$ has a natural left action of $G_{r(e)}$ defined by setting $g \cdot h$ to be the representative in $\Sigma_e$ of the coset $ghG_e$, for $g \in G_{r(e)}$, $h \in \Sigma_e$. For $h,k \in \Sigma_e$, we denote by $e_{h,k} \in \KK(\ell^2(\Sigma_e))$ the rank-one partial isometry sending the standard basis element $\xi_k \in \ell^2(\Sigma_e)$ at $k$ to the standard basis element $\xi_h$ at $h$. We denote by $u$ the canonical unitary representation of a group into its reduced group $C^*$-algebra. Finally, we write $\theta_e$ for the map defined by
	\begin{align*}
		\theta_e \colon C^*_r(G_{r(e)}) &\to \mathcal{K}(\ell^2(\Sigma_e)) \otimes C^*_r(G_{s(e)}) \\
		u_g &\mapsto \sum_{h \in \Sigma_e} e_{h,g^{-1}\cdot h} \otimes u_{h^{-1} g (g^{-1} \cdot h)}.
	\end{align*}
\end{notation}

Note that for any $g \in G_{r(e)}$ and $h \in \Sigma_e$, the elements $g^{-1}h$ and $g^{-1} \cdot h$ are in the same coset in $G_{r(e)}/G_e$, so the element $h^{-1}g(g^{-1}\cdot h)$ is in $G_e$ and therefore also in $G_{s(e)}$. Also, recall that the row-finiteness of $E$ implies that $\Sigma_e$ is finite, so the sum in the above formula for $\theta_e$ is well-defined.

We are primarily interested in the induced map $(\theta_e)_*$ on $K$-theory; the following lemma shows that this map is (essentially) independent of the choice of $e \in E^1$ in a fixed edge orbit. Note that the lemma also implies that $(\theta_e)_*$ is independent of the choice of transversal $\Sigma_e$, by taking $e' = e$ and $g$ to be the identity element of $G$.

\begin{lemma} \label{lem:theta_e diagram}
  Let $G$ be a countable discrete group acting on a row-finite, finitely aligned multitree $E$ with no sources. Let $e, e' \in E^1$ be edges in the same orbit, and let $g \in G$ be an element such that $e' = g \cdot e$. Write $\Ad u_g \colon C^*_r(G_{r(e)}) \to C^*_r(G_{r(e')})$ for the *-homomorphism defined by $u_h \mapsto u_{ghg^{-1}}$ for $h \in G_{r(e)}$. We have the following commutative diagram on $K$-theory
  \[\begin{tikzcd}
    K_*(C^*_r(G_{r(e)})) \arrow[r,"(\theta_e)_*"] \arrow[d,"(\Ad u_g)_*"] & K_*(C^*_r(G_{s(e)})) \arrow[d,"(\Ad u_g)_*"] \\
    K_*(C^*_r(G_{r(e')})) \arrow[r,"(\theta_{e'})_*"] & K_*(C^*_r(G_{s(e')})),
  \end{tikzcd}\]
  where the downwards arrows are isomorphisms.
\end{lemma}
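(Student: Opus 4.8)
The plan is to verify the square at the level of the underlying $C^*$-homomorphisms, up to an inner automorphism and the stabilisation isomorphism that is implicit in reading $(\theta_e)_*$ as a map with codomain $K_*(C^*_r(G_{s(e)}))$. Since $e' = g\cdot e$ we have $r(e') = g\cdot r(e)$, $s(e') = g\cdot s(e)$, and hence $G_{r(e')} = gG_{r(e)}g^{-1}$, $G_{s(e')} = gG_{s(e)}g^{-1}$ and $G_{e'} = gG_eg^{-1}$; thus $\Ad u_g$ restricts to group isomorphisms and induces $*$-isomorphisms of the corresponding reduced group $C^*$-algebras, so the two vertical maps are isomorphisms as claimed. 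Conjugation by $g$ also yields a bijection $\phi\colon\Sigma_e\to\Sigma_{e'}$ sending $m\in\Sigma_e$ to the representative in $\Sigma_{e'}$ of the coset $gmg^{-1}G_{e'}$; note $\phi(1_G)=1_G$. I would then let $W\colon\ell^2(\Sigma_e)\to\ell^2(\Sigma_{e'})$ be the unitary $\xi_m\mapsto\xi_{\phi(m)}$, so that $\Ad W\colon\KK(\ell^2(\Sigma_e))\to\KK(\ell^2(\Sigma_{e'}))$ sends $e_{m,n}\mapsto e_{\phi(m),\phi(n)}$.

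The two composites to compare are $\Phi_1 := \theta_{e'}\circ\Ad u_g$ and $\Phi_2 := (\Ad W\otimes\Ad u_g)\circ\theta_e$, both mapping $C^*_r(G_{r(e)})$ into $\KK(\ell^2(\Sigma_{e'}))\otimes C^*_r(G_{s(e')})$. I would evaluate each on a generator $u_h$, $h\in G_{r(e)}$, writing $h' := ghg^{-1}$. The first check is that the matrix-unit labels agree: from $\phi(h^{-1}\cdot m)G_{e'} = gh^{-1}mG_eg^{-1} = h'^{-1}\big(gmg^{-1}G_{e'}\big)$ one reads off $\phi(h^{-1}\cdot m) = h'^{-1}\cdot\phi(m)$, so after the substitution $k=\phi(m)$ both sums are indexed over $k\in\Sigma_{e'}$ with the same matrix unit $e_{k,\,h'^{-1}\cdot k}$. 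Comparing the group-element entries, $\Phi_1$ produces $u_{k^{-1}h'(h'^{-1}\cdot k)}$, whereas $\Phi_2$ produces $u_{\gamma_k^{-1}\,k^{-1}h'(h'^{-1}\cdot k)\,\gamma_{h'^{-1}\cdot k}}$, where $\gamma_k := k^{-1}gmg^{-1}\in G_{e'}$ records the failure of $gmg^{-1}$ to lie exactly in the transversal $\Sigma_{e'}$.

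The key point — and the step I expect to be the main obstacle — is to absorb these cocycle elements $\gamma_k\in G_{e'}\subseteq G_{s(e')}$, which prevent the $C^*$-level square from commuting on the nose for a general transversal. I would set $V := \sum_{k\in\Sigma_{e'}}e_{k,k}\otimes u_{\gamma_k^{-1}}$, which (as $E$ is row-finite, so $\Sigma_{e'}$ is finite) is a genuine unitary in the unital $C^*$-algebra $\KK(\ell^2(\Sigma_{e'}))\otimes C^*_r(G_{s(e')})$, and check from the formulas above that $\Phi_2 = \Ad(V)\circ\Phi_1$. Since an inner automorphism induces the identity on $K$-theory, this gives $(\Phi_1)_* = (\Phi_2)_*$. (Specialising to $e'=e$ and $g=1_G$, this also delivers the independence of $(\theta_e)_*$ from the choice of $\Sigma_e$ mentioned after the statement.)

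Finally I would remove the auxiliary factor $\Ad W$. Because $\phi(1_G)=1_G$, the map $\Ad W\otimes\id$ carries the corner $e_{1_G,1_G}\otimes(-)$ to $e_{1_G,1_G}\otimes(-)$, so it is compatible with the stabilisation isomorphisms used to identify the $K$-theory of the ambient matrix algebras with $K_*(C^*_r(G_{s(e)}))$ and $K_*(C^*_r(G_{s(e')}))$; under these identifications $(\Ad W\otimes\id)_*$ becomes the identity. Reading off $(\Phi_1)_* = (\theta_{e'})_*\circ(\Ad u_g)_*$ and $(\Phi_2)_* = (\Ad u_g)_*\circ(\theta_e)_*$ then gives exactly the commutativity of the square, completing the proof.
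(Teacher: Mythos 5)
Your proposal is correct and follows essentially the same route as the paper's own proof: the same transversal-equivariance identity $\phi(h^{-1}\cdot m) = (ghg^{-1})^{-1}\cdot\phi(m)$, the same cocycle-absorbing unitary (your $V$ is exactly the adjoint of the paper's $u = \sum_{h\in\Sigma_e} e_{h',h'}\otimes u_{h''}$), and the same conclusion via inner automorphisms acting trivially on $K$-theory together with compatibility with the stabilisation identifications. Your explicit treatment of the corner $e_{1_G,1_G}$ via $\phi(1_G)=1_G$ is in fact slightly more careful than the paper's final sentence, which leaves that identification implicit.
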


\begin{proof}
  The claim about the isomorphisms follows from the fact that $G_{r(e')} = g G_{r(e)} g^{-1}$ and similarly $G_{s(e')} = g G_{s(e)} g^{-1}$. Note also that $G_{e'} = g G_e g^{-1}$. For each $h \in \Sigma_e$, write $h' \in \Sigma_{e'}$ for the representative in $\Sigma_{e'}$ of the coset $ghg^{-1}G_{e'}$, and write $h'' \in G_{s(e')}$ for the element $(h')^{-1}ghg^{-1}$ (note that $h''$ is in $G_{e'}$ and therefore $G_{s(e')}$ because both $h'$ and $ghg^{-1}$ are in the same coset in $G_{r(e')}/G_{e'}$).
  
  First we claim that for any $k \in G_{r(e)}$ and any $h \in \Sigma_e$, we have that $(gkg^{-1})^{-1} \cdot h' = (k^{-1} \cdot h)'$. To see this, first note that $(gkg^{-1})^{-1} \cdot h'$ is the representative in $\Sigma_{e'}$ of the coset
  \[gk^{-1}g^{-1}h'G_{e'} = gk^{-1}g^{-1}ghg^{-1}G_{e'} = gk^{-1}hg^{-1}G_{e'}.\]
  On the other hand, $(k^{-1} \cdot h)'$ is the representative in $\Sigma_{e'}$ of the coset
  \[g(k^{-1} \cdot h)g^{-1}G_{e'} = g(k^{-1} \cdot h)G_e g^{-1} = gk^{-1}hG_e g^{-1} = gk^{-1}hg^{-1}G_{e'}.\]
  So $(gkg^{-1})^{-1} \cdot h'$ and $(k^{-1} \cdot h)'$ are both representatives in $\Sigma_{e'}$ of the same coset, and hence they must be equal, as claimed.
  
  Now note that for $h \in \Sigma_e$, we have $h' = ghg^{-1}(h'')^{-1}$. So for any $k \in G_{r(e)}$, we have
  \begin{align*}
    (h')^{-1}(gkg^{-1})((gkg^{-1})^{-1} \cdot h') &= (h')^{-1}(gkg^{-1})(k^{-1} \cdot h)' \\
    &= (h''gh^{-1}g^{-1})(gkg^{-1})[g(k^{-1}\cdot h)g^{-1}((k^{-1} \cdot h)'')^{-1}] \\
    &= h''gh^{-1}k(k^{-1}\cdot h)g^{-1}[(k^{-1}\cdot h)'']^{-1}.
  \end{align*}
  So, writing $\varphi \colon \KK(\ell^2(\Sigma_e)) \to \KK(\ell^2(\Sigma_{e'}))$ for the isomorphism induced by the map $h \mapsto h'$, and writing $u \in \KK(\ell^2(\Sigma_{e'})) \otimes C^*_r(G_{s(e')})$ for the unitary defined by
  \[u = \sum_{h \in \Sigma_e} e_{h',h'} \otimes u_{h''},\]
  we have
  \begin{align*}
    (\theta_{e'} \circ \Ad u_g)(u_k) &= \theta_{e'}(u_{gkg^{-1}}) \\
    &= \sum_{h' \in \Sigma_{e'}} e_{h',(gkg^{-1})^{-1} \cdot h'} \otimes u_{(h')^{-1}(gkg^{-1})((gkg^{-1})^{-1} \cdot h')} \\
    &= \sum_{h \in \Sigma_e} e_{h',(k^{-1} \cdot h)'} \otimes u_{h''gh^{-1}k(k^{-1}\cdot h)g^{-1}[(k^{-1}\cdot h)'']^{-1}} \\
    &= u \left( \sum_{h \in \Sigma_e} e_{h',(k^{-1} \cdot h)'} \otimes u_{g(h^{-1}k(k^{-1}\cdot h))g^{-1}} \right) u^* \\
    &= (\Ad u)((\varphi \otimes \Ad u_g)(\theta_e(u_k))).
  \end{align*}
  Since $\theta_{e'} \circ \Ad u_g$ and $\Ad u \circ (\varphi \otimes \Ad u_g) \circ \theta_e$ agree on a generating set of a dense subspace of $C^*_r(G_{r(e)})$, they must coincide. This means that their induced maps on $K$-theory must also be the same, and noting that $\Ad u$ induces the identity map in $K$-theory as it is an inner automorphism, we get the commutative diagram in the statement of the lemma.
\end{proof}

From now on, we implicitly use Lemma~\ref{lem:theta_e diagram} to freely identify the groups $K_*(C^*_r(G_v))$ for vertices $v \in E^0$ in the same vertex orbit, as well as to identify the maps $(\theta_e)_*$ for edges $e \in E^1$ in the same edge orbit. When convenient, we will also think of $(\theta_e)_*$ as a map $\oplus_{[v]} K_*(C^*_r(G_v)) \to \oplus_{[w]} K_*(C^*_r(G_w))$ by defining $(\theta_e)_*$ to be the zero map on all summands of the domain apart from the $[r(e)]$ summand, and extending the codomain in the natural way.

\begin{thm} \label{thm:multitree sequence}
	Let $G$ be a countable discrete group acting on a row-finite, finitely aligned multitree $E$ with no sources. Suppose that the action $G \curvearrowright \Omega$ is amenable. Then for $\alpha_i := \sum_{[e] \in G\backslash E^1} (\theta_e)_{*,i}$, $i=0,1$, we have the following six-term exact sequence
	\[\begin{tikzcd}
		\bigoplus_{[v]\in G\backslash E^0} K_0(C^*_r(G_v))
		\arrow[r,"\id-\alpha_0"] &
		\bigoplus_{[v]\in G\backslash E^0} K_0(C^*_r(G_v)) \arrow[r] &
		K_0(C_0(\partial E) \rtimes_{\tau_\partial,r} G) \arrow[d] \\
		K_1(C_0(\partial E) \rtimes_{\tau_\partial,r} G) \arrow[u] &
		\bigoplus_{[v]\in G\backslash E^0} K_1(C^*_r(G_v)) \arrow[l] &
		\bigoplus_{[v]\in G\backslash E^0} K_1(C^*_r(G_v)),
		\arrow[l,"\id-\alpha_1"]
	\end{tikzcd}\]
\end{thm}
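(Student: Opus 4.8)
The plan is to assemble the six-term exact sequence from two ingredients: the short exact sequence of crossed products coming from the open-closed decomposition of $\Omega = E^0 \cup \partial E$, and the identification of the $K$-theory of the two ``end'' algebras via \thmref{thm:mainCELthm}. Concretely, $E^0$ is an open subset of $\Omega$ and $\partial E$ is its closed complement, so we have the equivariant short exact sequence $0 \to C_0(E^0) \to C_0(\Omega) \to C_0(\partial E) \to 0$. Since $G \curvearrowright \Omega$ is assumed amenable, passing to reduced crossed products preserves exactness, yielding
\[
\begin{tikzcd}
0 \ar{r} & C_0(E^0)\rtimes_{\tau_0,r} G \ar{r}{\iota} & C_0(\Omega)\rtimes_{\tau,r} G \ar{r} & C_0(\partial E)\rtimes_{\tau_\partial,r} G \ar{r} & 0,
\end{tikzcd}
\]
whose associated six-term sequence in $K$-theory is the backbone of the statement.

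The next step is to compute the $K$-theory of the two outer terms and to check that the hypotheses of \thmref{thm:mainCELthm} are met. For $C_0(\Omega)\rtimes_{\tau,r} G$ I would take $\VV = \{Z(v) : v \in E^0\}$ as the $G$-invariant generating family of compact open sets; finite alignment of $\VV$ is exactly \proref{prop:multitree cylinder intersection}, which expresses $Z(v)\cap Z(w)$ as a finite disjoint union of cylinder sets $Z(u)$ over $u \in v \vee w$, while independence follows from \lemref{lemma:multitree cylinder sets} (a containment $Z(w) \subseteq Z(v)$ forces $v \le w$, and one rules out a cylinder set being covered by others below it using the tree structure). Amenability of $G \curvearrowright \Omega$ supplies the Baum--Connes hypothesis needed in the theorem. \thmref{thm:mainCELthm} then gives $K_*(C_0(\Omega)\rtimes_{\tau,r} G) \cong \bigoplus_{[v] \in G\backslash E^0} K_*(C^*_r(G_v))$, with the summand inclusions induced by $u_g \mapsto 1_{Z(v)} u_g$. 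For the ideal $C_0(E^0)\rtimes_{\tau_0,r} G$, the family $\{\{v\}\}$ of singletons is trivially independent and finitely aligned (the sets are disjoint), and the same theorem gives the identical direct sum $\bigoplus_{[v]} K_*(C^*_r(G_v))$, with inclusions $u_g \mapsto \xi_v u_g$.

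The \textbf{main obstacle} — and the real content of the theorem — is identifying the connecting map $\iota_*$ of the short exact sequence with $\id - \alpha_i$ under these two isomorphisms. I would trace a basis element of $K_*(C^*_r(G_v))$ through the inclusion $\iota$ and re-express the image in terms of the generators of $K_*(C_0(\Omega)\rtimes_{\tau,r}G)$. The $\id$ part comes from the fact that $\xi_v = 1_{\{v\}}$ sits inside $1_{Z(v)}$ as the ``top vertex'' contribution; the $\alpha_i = \sum_{[e]} (\theta_e)_{*,i}$ part should arise because, in $C_0(\Omega)$, the projection $1_{Z(v)}$ decomposes as $\xi_v$ plus the sum of $1_{Z(s(e))}$ over edges $e$ with $r(e) = v$, reflecting that $Z(v) \setminus \{v\} = \bigsqcup_{e \in r^{-1}(v)} Z(s(e))$. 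The map $\theta_e$ in \ref{not:thetaetc} is precisely the $K$-theoretic bookkeeping of how the stabiliser $G_v = G_{r(e)}$ acts on the finite fibre $G_{r(e)}/G_e \cong \Sigma_e$ over the edge $e$, compressed to $C^*_r(G_{s(e)})$; \lemref{lem:theta_e diagram} guarantees this is well-defined on orbit classes. The careful part is checking that the inclusion $\iota$ on the level of the canonical generators $\xi_v u_g$, when pushed into $C_0(\Omega)\rtimes_{\tau,r}G$ and read off against the cylinder-set generators, produces exactly the difference of the identity map and the sum of the edge maps $\theta_e$; this is a matrix-entry computation using the transversal decomposition $1_{Z(v)} = \xi_v + \sum_{r(e)=v}\sum_{h \in \Sigma_e} \tau_h(1_{Z(s(e))})$ and the covariance relation $u_g 1_{Z(w)} u_g^* = 1_{Z(g\cdot w)}$. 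Once this identification is in place, substituting $\iota_{*,i} = \id - \alpha_i$ into the six-term sequence of the short exact sequence yields the claimed diagram.
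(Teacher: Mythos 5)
Your proposal is correct and follows essentially the same route as the paper: the same short exact sequence $0 \to C_0(E^0)\rtimes_{\tau_0,r} G \to C_0(\Omega)\rtimes_{\tau,r} G \to C_0(\partial E)\rtimes_{\tau_\partial,r} G \to 0$ (exact by amenability), the same application of \thmref{thm:mainCELthm} to the cylinder-set family $\{Z(v) : v \in E^0\}$ with independence and finite alignment checked via \lemref{lemma:multitree cylinder sets} and \proref{prop:multitree cylinder intersection}, and the same identification of $\iota_{*,i}$ with $\id-\alpha_i$ via the orthogonal decomposition $1_{Z(v)} = \xi_v + \sum_{[e]\in G_v\backslash vE^1} 1_{[e]}$. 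The ``matrix-entry computation'' you defer is precisely what the paper's \lemref{lem:selfadjointunitary} and \lemref{lem:multitree theta_e} carry out (conjugating the corner embedding by the self-adjoint unitary $u_{[e]}$ to show $(l_{Z(s(e))})_*\circ(\theta_e)_* = (l_{[e]})_*$), culminating in \proref{prop:multitree iota_* diagram}.
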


\begin{remark}\label{rem:amenabilityassumptions}
  We note that the action $G \curvearrowright \Omega$ is amenable if and only if $G_v$ is amenable for all $v \in E^0$ and the action $G \curvearrowright \partial E$ is amenable. This is because:
  \begin{itemize}
    \item $G \curvearrowright \Omega$ is amenable if and only if both induced actions $G \curvearrowright E^0$ and $G \curvearrowright \partial E$ are amenable, by Proposition~3.23 and Theorem~5.15 of \cite{BEW} taken together; and
    \item $G \curvearrowright E^0$ is amenable if and only if $G_v$ is amenable for all $v \in E^0$: per \cite[Remarque~4.10]{AD}, the amenability of $G \curvearrowright E^0$ is equivalent to the nuclearity of the crossed product $C_0(E^0) \rtimes G \cong \bigoplus_{[v] \in G\backslash E^0} \KK(\ell^2(G \cdot v)) \otimes C^*(G_v)$, and this is nuclear if and only if $G_v$ is amenable for all $v \in E^0$.
  \end{itemize}
\end{remark}

The six-term exact sequence in the theorem is essentially the one coming from the short exact sequence of $C^*$-algebras
\[0 \to C_0(E^0) \rtimes_{\tau_0,r} G \xrightarrow{\iota} C_0(\Omega) \rtimes_{\tau,r} G \to C_0(\partial E) \rtimes_{\tau_\partial,r} G \to 0,\]
where $\iota\colon C_0(E^0) \rtimes_{\tau_0,r} G \to C_0(\Omega) \rtimes_{\tau,r} G$ is the canonical inclusion. The following two propositions together yield the sequence in the theorem.

\begin{prop} \label{prop:multitree Omega K-theory}
	Let $G$ be a countable discrete group acting on a row-finite, finitely aligned multitree $E$ with no sources. Suppose that the action $G \curvearrowright \Omega$ is amenable. We have the following isomorphisms in $K$-theory:
	\[\bigoplus_{[v]\in G\backslash E^0} K_*(C^*_r(G_v)) \cong K_*(C_0(E^0) \rtimes_{\tau_0,r} G) \quad \text{and} \quad \bigoplus_{[v]\in G\backslash E^0} K_*(C^*_r(G_v)) \cong K_*(C_0(\Omega) \rtimes_{\tau,r} G).\]
	Moreover, on each direct summand, the isomorphisms are induced by the *-homomorphisms
	\begin{align*}
		l_v \colon C^*_r(G_v) &\to C_0(E^0) \rtimes_{\tau_0,r} G
		& \text{and} &&
		l_{Z(v)}\colon C^*_r(G_v) &\to C_0(\Omega) \rtimes_{\tau,r} G\\
		u_g &\mapsto \xi_v u_g
		&&&
		u_g &\mapsto 1_{Z(v)} u_g
	\end{align*}
	respectively.
\end{prop}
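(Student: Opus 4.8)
The plan is to derive both isomorphisms as direct applications of Theorem~\ref{thm:mainCELthm} with $A = \C$ (so that $A \rtimes_{\alpha,r} G_i = C^*_r(G_i)$), using two different choices of the data $(\Omega, I, \VV)$. Note that a countable discrete group is automatically second countable and locally compact, so the standing hypotheses of the theorem on $G$ are met. For the first isomorphism I would take the space to be $E^0$ with its discrete topology, index set $I = E^0$, and $\VV = \{\{v\} : v \in E^0\}$; for the second I would take the space $\Omega = E^0 \cup \partial E$ of Definition~\ref{def:Omega}, again $I = E^0$, but now $\VV = \{Z(v) : v \in E^0\}$. In both cases the action on $I = E^0$ is the given action of $G$ on vertices, and one checks that $g \cdot \{v\} = \{g\cdot v\}$ and $g \cdot Z(v) = Z(g \cdot v)$, so $\VV$ is $G$-invariant and the stabiliser of the index $v \in I$ equals $G_v$ (using that $v \mapsto \{v\}$, respectively $v \mapsto Z(v)$, is injective; for the latter, $Z(v) = Z(w)$ forces $v \le w \le v$ by Lemma~\ref{lemma:multitree cylinder sets}, hence $v = w$).

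For the first isomorphism the hypotheses on $\VV = \{\{v\}\}$ are immediate: the compact open subsets of a countable discrete space are exactly the finite subsets, so $\VV$ generates them under finite unions; independence is trivial; and $\{v\} \cap \{w\}$ is either $\{v\}$ or $\emptyset$, so $\VV$ is finitely aligned. It then remains only to supply the Baum--Connes hypothesis with coefficients in $C_0(I) = C_0(\Omega) = C_0(E^0)$, which I would deduce from the amenability of $G \curvearrowright E^0$ (a consequence of the amenability of $G \curvearrowright \Omega$, as recorded in Remark~\ref{rem:amenabilityassumptions}). Theorem~\ref{thm:mainCELthm} then gives $\bigoplus_{[v]} K_*(C^*_r(G_v)) \cong K_*(C_0(E^0) \rtimes_{\tau_0,r} G)$, and its explicit description identifies the restriction to the $[v]$ summand with the map induced by $u_g \mapsto 1_{\{v\}} u_g = \xi_v u_g$, which is exactly $l_v$.

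For the second isomorphism, finite alignment of $\VV = \{Z(v)\}$ is precisely Proposition~\ref{prop:multitree cylinder intersection}, since $v \vee w$ is finite by finite alignment of $E$. Independence follows from Lemma~\ref{lemma:multitree cylinder sets}: if $Z(v) = \bigcup_i Z(v_i)$, then $v \in Z(v_i)$ for some $i$, giving $v_i \le v$, while $Z(v_i) \subseteq Z(v)$ gives $v \le v_i$, so $v = v_i$ and $Z(v) = Z(v_i)$. For generation I would record the identity $Z(v) \setminus \{v\} = \bigcup_{e \in vE^1} Z(s(e))$ (each vertex above $v$ and each boundary point extending $v$ passes through a first edge out of $v$, while $v$ itself lies in no $Z(s(e))$ since a multitree has no self-loops); as $vE^1$ is finite by row-finiteness, this exhibits $\{v\} = Z(v) \setminus \bigcup_{e \in vE^1} Z(s(e))$ as a combination of elements of $\VV$ under differences, and since $\{\{v\}\} \cup \{Z(v)\}$ is a basis for $\Omega$, the family $\VV$ generates all compact open subsets. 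The Baum--Connes hypothesis with coefficients in $C_0(E^0)$ and $C_0(\Omega)$ again follows from the amenability assumption via Remark~\ref{rem:amenabilityassumptions}. Theorem~\ref{thm:mainCELthm} then yields the second isomorphism, with the restriction to the $[v]$ summand induced by $u_g \mapsto 1_{Z(v)} u_g$, i.e. by $l_{Z(v)}$.

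The set-theoretic verifications for $\VV$ are routine given the machinery already in place. I expect the two genuinely delicate points to be: (i) the generation step for $\VV = \{Z(v)\}$, namely recovering the singletons, which is where the no-sources and multitree hypotheses enter (I would keep this step at the level of the union $Z(v)\setminus\{v\} = \bigcup_e Z(s(e))$, since disjointness is not needed here and follows anyway from the proven independence); and (ii), the \emph{main obstacle}, extracting the Baum--Connes hypotheses of Theorem~\ref{thm:mainCELthm} from amenability of the action. I would handle the latter by passing to the transformation groupoids $G \ltimes \Omega$ and $G \ltimes E^0$, which are amenable because the corresponding actions are, and then invoking that amenable groupoids satisfy the Baum--Connes conjecture together with the standard comparison between Baum--Connes for $G$ with coefficients $C_0(X) \otimes B$ and Baum--Connes for $G \ltimes X$.
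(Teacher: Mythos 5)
Your proposal is correct and takes essentially the same route as the paper: both isomorphisms are obtained from Theorem~\ref{thm:mainCELthm} with $A=\C$, with the hypotheses on $\VV=\{Z(v):v\in E^0\}$ checked exactly as you do --- generation via $\{v\}=Z(v)\setminus\bigcup_{e\in vE^1}Z(s(e))$, independence via Lemma~\ref{lemma:multitree cylinder sets}, finite alignment via Proposition~\ref{prop:multitree cylinder intersection} --- and the Baum--Connes hypotheses deduced from amenability of the vertex stabilisers and of the transformation groupoids (the paper packages this by citing \cite[Remark~3.13]{CEL} together with \cite[Examples~2.2.14(2)]{ADR}). The only cosmetic difference is the first isomorphism, which the paper gets by quoting \cite[Remarks~3.13 and 3.15]{CEL} directly (the singleton family is closed under intersections, so no generalisation is needed there), whereas you re-derive it by applying Theorem~\ref{thm:mainCELthm} to the discrete space $E^0$; both are valid.
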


\begin{proof}
	The claims about the first isomorphism follow from \cite[Remarks~3.13 and 3.15]{CEL}.
	
	For the second isomorphism, we claim that the collection of compact open sets $\mathcal{V} = \{Z(v) : v \in E^0\}$ in $\Omega$ satisfies the conditions of Theorem~\ref{thm:mainCELthm}. The collection $\mathcal{V}$ is $G$-invariant since the action of $G$ on $\Omega$ satisfies $g \cdot Z(v) = Z(g \cdot v)$ for all $g \in G$ and $v \in E^0$. To see that $\mathcal{V}$ generates the collection of all compact open subsets of $\Omega$, it is enough to show that every basis set of $\Omega$ can be written in terms of the sets in $\mathcal{V}$ using finite unions, finite intersections and set differences; but for each $v \in E^0$, we have $Z(v) \in \mathcal{V}$ and $\{v\} = Z(v) \setminus \cup_{e \in vE^1} Z(s(e))$, so this is true. To see that $\mathcal{V}$ is independent, note that Lemma~\ref{lemma:multitree cylinder sets} implies that for any vertex $v \in E^0$, the only subset of $Z(v)$ in $\VV$ containing $v$ is $Z(v)$ itself. Finally, finite alignment follows from Proposition~\ref{prop:multitree cylinder intersection} and the assumption that $E$ itself is finitely aligned.
	
	In order to apply Theorem~\ref{thm:mainCELthm}, we also need the group $G$ to satisfy the Baum--Connes conjecture with coefficients in $C_0(E^0)$ and $C_0(\Omega)$. By \cite[Remark~3.13]{CEL}, it is enough that all vertex stabilisers are amenable and that the transformation groupoid $\Omega \rtimes G$ is amenable. But we know from Remark~\ref{rem:amenabilityassumptions} that each vertex stabiliser is amenable, and the amenability of $G \curvearrowright \Omega$ implies that $\Omega \rtimes G$ is amenable by \cite[Examples~2.2.14(2)]{ADR}. Hence the conditions of Theorem~\ref{thm:mainCELthm} are satisfied, and the claims about the second isomorphism follow directly from that theorem.
\end{proof}

\begin{prop} \label{prop:multitree iota_* diagram}
	Let $G$ be a countable discrete group acting on a row-finite, finitely aligned multitree $E$ with no sources. Write $\alpha_i := \sum_{[e] \in G\backslash E^1} (\theta_e)_{*,i}$ for $i = 0, 1$, and write $\iota$ for the canonical inclusion $C_0(E^0) \rtimes_{\tau_0,r} G \to C_0(\Omega) \rtimes_{\tau,r} G$. We have the following commutative diagram for $i = 0,1$:
	\[\begin{tikzcd}
		\bigoplus_{[v] \in G\backslash E^0} K_i(C^*_r(G_v))
		\arrow[r,"\id - \alpha_i"]
		\arrow[d, "\bigoplus_{[v]}(l_v)_{*,i}"] &
		\bigoplus_{[w] \in G\backslash E^0} K_i(C^*_r(G_w))
		\arrow[d,"\bigoplus_{[w]} (l_{Z(w)})_{*,i}"] \\
		K_i(C_0(E^0) \rtimes_{\tau_0,r} G) \arrow[r,"\iota_{*,i}"] &
		K_i(C_0(\Omega) \rtimes_{\tau,r} G).
	\end{tikzcd}\]
\end{prop}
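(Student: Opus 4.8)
The plan is to verify commutativity one summand at a time. Since $\bigoplus_{[v]}(l_v)_{*,i}$ is an isomorphism onto $K_i(C_0(E^0)\rtimes_{\tau_0,r}G)$, it suffices to check that the two composites agree on each $[v]$-summand, and there the lower-left composite is simply $(\iota\circ l_v)_{*,i}$ induced by the single $*$-homomorphism $\iota\circ l_v\colon C^*_r(G_v)\to C_0(\Omega)\rtimes_{\tau,r}G$, $u_g\mapsto\xi_v u_g$ (for $g\in G_v$). The starting point is the disjoint decomposition $Z(v)=\{v\}\sqcup\bigsqcup_{e\in vE^1}Z(s(e))$: I would deduce disjointness of the $Z(s(e))$ from the multitree property via Proposition~\ref{prop:multitree cylinder intersection}, since two distinct edges into $v$ whose sources shared a common upper bound would produce two distinct directed paths into $v$. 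At the level of indicator functions this reads $\xi_v=1_{Z(v)}-\sum_{e\in vE^1}1_{Z(s(e))}$.

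First I would realise $l_{Z(v)}$ (restricted to $G_v$) as an orthogonal sum of two $*$-homomorphisms. Define $\Phi\colon C^*_r(G_v)\to C_0(\Omega)\rtimes_{\tau,r}G$ by $u_g\mapsto\sum_{e\in vE^1}1_{Z(s(e))}u_g$. Using $g\cdot Z(s(e))=Z(s(g\cdot e))$ for $g\in G_v$ together with the disjointness above, one checks that $\Phi$ is a $*$-homomorphism and that the ranges of $\iota\circ l_v$ and $\Phi$ are orthogonal (as $\{v\}$ meets no $Z(s(e))$). Hence $l_{Z(v)}$ is the orthogonal sum of $\iota\circ l_v$ and $\Phi$, so on $K$-theory $(l_{Z(v)})_{*,i}=(\iota\circ l_v)_{*,i}+\Phi_{*,i}$, i.e.\ $(\iota\circ l_v)_{*,i}=(l_{Z(v)})_{*,i}-\Phi_{*,i}$. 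The first term is exactly the contribution of $\id$ on the $[v]$-summand, so the whole statement reduces to identifying $\Phi_{*,i}$ with the $\alpha_i$-contribution $\sum_{[e]:\,[r(e)]=[v]}(l_{Z(s(e))})_{*,i}\circ(\theta_e)_{*,i}$.

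For this I would decompose $vE^1=\bigsqcup_{[e]}G_v\cdot e$ over the $G$-orbits meeting $vE^1$, choosing the representative with $r(e)=v$ and identifying $G_v\cdot e$ with the transversal $\Sigma_e$ for $G_v/G_e=G_{r(e)}/G_e$. The crucial construction is a $*$-homomorphism $L\colon\KK(\ell^2(\Sigma_e))\otimes C^*_r(G_{s(e)})\to C_0(\Omega)\rtimes_{\tau,r}G$ assembled from the partial isometries $w_h:=u_h\,1_{Z(s(e))}$ ($h\in\Sigma_e$), via $e_{h,k}\otimes u_m\mapsto w_h\,l_{Z(s(e))}(u_m)\,w_k^*$. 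A direct computation with the defining formula for $\theta_e$ gives $L\circ\theta_e=\Phi_e$, the $[e]$-summand of $\Phi$, while restricting $L$ to the corner at the identity $1_{G_{r(e)}}\in\Sigma_e$ recovers $l_{Z(s(e))}$ (since $w_{1_{G}}=1_{Z(s(e))}$). As that corner inclusion induces precisely the Morita identification $K_*(C^*_r(G_{s(e)}))\cong K_*\big(\KK(\ell^2(\Sigma_e))\otimes C^*_r(G_{s(e)})\big)$ used to define $(\theta_e)_{*}$, I obtain $\Phi_{e,*}=(l_{Z(s(e))})_{*}\circ(\theta_e)_{*}$. Summing over orbits, and invoking Lemma~\ref{lem:theta_e diagram} to align representatives, yields the required equality and hence commutativity.

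I expect the main obstacle to be the well-definedness of $L$, i.e.\ that $\{w_h w_k^*:h,k\in\Sigma_e\}$ is a genuine system of matrix units sitting on top of the corner $l_{Z(s(e))}(C^*_r(G_{s(e)}))$. This reduces to $w_k^*w_{h'}=\delta_{k,h'}\,1_{Z(s(e))}$, which unwinds to $Z(s(e))\cap Z(w\cdot s(e))=\emptyset$ for $w=k^{-1}h'\in G_v\setminus G_e$; here I would first use that $m\in G_{s(e)}$ fixes $Z(s(e))$ setwise to reduce the general matrix element to this case, and then note that $w\cdot e$ is an edge into $v$ distinct from $e$ (as $w\notin G_e$), so the emptiness again follows from the multitree property through Proposition~\ref{prop:multitree cylinder intersection}. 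Once this disjointness is secured, the remaining verifications (that $\Phi$ and each $\Phi_e$ are homomorphisms, orthogonality of ranges, and the $\theta_e$-computation) are routine.
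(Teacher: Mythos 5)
Your proposal is correct and follows essentially the same route as the paper: the orthogonal decomposition $1_{Z(v)}=\xi_v+\sum_{[e]\in G_v\backslash vE^1}1_{[e]}$ (justified by the multitree property via Proposition~\ref{prop:multitree cylinder intersection}), reduction to the key identity $(l_{Z(s(e))})_{*}\circ(\theta_e)_{*}=(l_{[e]})_{*}$, and a sum over orbits using Lemma~\ref{lem:theta_e diagram}. The only difference is packaging: where you build a homomorphism $L$ out of the matrix units $w_hw_k^*$ mapping $\KK(\ell^2(\Sigma_e))\otimes C^*_r(G_{s(e)})$ into the crossed product, the paper conjugates by the self-adjoint unitary $u_{[e]}=\sum_{h,k}e_{h,k}\otimes u_{kh^{-1}}1_{Z(s(h\cdot e))}$ (built from the very same partial isometries $u_h1_{Z(s(e))}$) inside $\KK(\ell^2(\Sigma_e))\otimes C_{[e]}$ — the two verifications rest on the identical disjointness facts and the same stability isomorphism.
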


To prove Proposition~\ref{prop:multitree iota_* diagram}, we need some notation and two lemmas.

\begin{notation}
	For each $e \in E^1$, write
	\[1_{[e]} := \sum_{e' \in G_{r(e)} \cdot e} 1_{Z(s(e'))} \in C_0(\Omega)\]
	and write $l_{[e]}$ for the map
	\[C^*_r(G_{r(e)}) \to C_0(\Omega) \rtimes_{\tau,r} G, \quad u_g \mapsto 1_{[e]}u_g.\]
	Write
	\[C_{[e]} := C^*(\{1_{Z(s(e'))} : e' \in G_{r(e)} \cdot e\}) \rtimes_{\tau|,r} G_{r(e)} \subseteq C_0(\Omega) \rtimes_{\tau,r} G.\]
\end{notation}

\begin{lemma}\label{lem:selfadjointunitary}
	Let $G$ be a countable discrete group acting on a row-finite, finitely aligned multitree $E$ with no sources, and fix $e \in E^1$. The element
	\[u_{[e]} = \sum_{h,k \in \Sigma_e} e_{h,k} \otimes u_{k h^{-1}}1_{Z(s(h\cdot e))} \in \mathcal{K}(\ell^2(\Sigma_e)) \otimes C_{[e]}\]
	is a self-adjoint unitary in $\mathcal{K}(\ell^2(\Sigma_e)) \otimes C_{[e]}$.
\end{lemma}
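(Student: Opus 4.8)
The plan is to exploit a key structural fact about multitrees: the source cylinder sets of distinct edges sharing a common range are disjoint. Concretely, writing $p_h := 1_{Z(s(h \cdot e))}$ for $h \in \Sigma_e$, I claim the projections $\{p_h : h \in \Sigma_e\}$ are pairwise orthogonal and sum to the unit of the coefficient algebra $C^*(\{1_{Z(s(e'))} : e' \in G_{r(e)} \cdot e\})$. Once this is in hand, I will regard $u_{[e]}$ as a matrix over $\Sigma_e \times \Sigma_e$ with $(h,k)$-entry $a_{hk} := u_{kh^{-1}} p_h \in C_{[e]}$ and verify separately that $u_{[e]}^* = u_{[e]}$ and $u_{[e]}^2 = 1$; since a self-adjoint involution is automatically a (self-adjoint) unitary, this finishes the proof. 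The sum defining $u_{[e]}$ is finite because row-finiteness forces $[G_{r(e)} : G_e] < \infty$, so $\Sigma_e$ is finite, and each $a_{hk}$ lies in $C_{[e]}$ since $kh^{-1} \in G_{r(e)}$ and $p_h$ is one of the generators.

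The crux --- and the main obstacle --- is the orthogonality claim. For distinct $h, k \in \Sigma_e$, the edges $h \cdot e$ and $k \cdot e$ are distinct (as $\Sigma_e$ is a transversal for $G_{r(e)}/G_e$) and both have range $r(e)$. I will show that $s(h\cdot e)$ and $s(k \cdot e)$ have no common upper bound: were $u$ one, concatenating the edges $h\cdot e$, $k\cdot e$ with paths realising $s(h\cdot e) \le u$ and $s(k \cdot e) \le u$ would produce two elements of $r(e)E^* u$ differing in their first edge, contradicting the multitree property. Proposition~\ref{prop:multitree cylinder intersection} then gives $Z(s(h\cdot e)) \cap Z(s(k \cdot e)) = \emptyset$, i.e.\ $p_h p_k = 0$. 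That $\sum_{h \in \Sigma_e} p_h$ is the unit of the (finite-dimensional) coefficient algebra is then immediate, and this is also the unit of the unital crossed product $C_{[e]}$.

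For self-adjointness, the only manipulation is moving an indicator past a unitary via the covariance relation $p_h u_g = u_g \tau_{g^{-1}}(p_h)$ together with $\tau_g(1_{Z(v)}) = 1_{Z(g \cdot v)}$ and $g \cdot s(h \cdot e) = s(gh \cdot e)$. This yields $a_{hk}^* = p_h u_{hk^{-1}} = u_{hk^{-1}} p_k = a_{kh}$, which says precisely that the $(h,k)$-entry of $u_{[e]}^*$ agrees with the $(k,h)$-entry of $u_{[e]}$; relabelling the summation indices gives $u_{[e]}^* = u_{[e]}$.

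For the involution property I compute the $(h,k')$-entry of $u_{[e]}^2$ as $\sum_{k \in \Sigma_e} a_{hk} a_{kk'}$. Expanding $a_{hk}a_{kk'} = u_{kh^{-1}k'k^{-1}}\, 1_{Z(s(kk'^{-1}h \cdot e))}\, 1_{Z(s(k \cdot e))}$ by the same covariance computation, the orthogonality fact forces the product of indicators to vanish unless $kk'^{-1}h \cdot e = k \cdot e$, i.e.\ unless $h = k'$; and when $h = k'$ the group element collapses to the identity and the term is exactly $p_k$. Hence the off-diagonal entries vanish and the $(h,h)$-entry equals $\sum_{k \in \Sigma_e} p_k = 1_{C_{[e]}}$, giving $u_{[e]}^2 = 1 \otimes 1_{C_{[e]}}$. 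I expect the orthogonality lemma to be the only substantive point; the remaining steps are bookkeeping with the covariance relation.
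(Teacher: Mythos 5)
Your proposal is correct and follows essentially the same route as the paper's proof: covariance manipulations give self-adjointness, pairwise orthogonality of the projections $\{1_{Z(s(h\cdot e))} : h \in \Sigma_e\}$ collapses the product to the diagonal, and the sum $\sum_{k}1_{Z(s(k\cdot e))} = 1_{[e]}$ yields the identity (your computation of $u_{[e]}^2$ is literally the paper's computation of $u_{[e]}u_{[e]}^*$, given self-adjointness). The only difference is that you prove the orthogonality claim via the multitree property and Proposition~\ref{prop:multitree cylinder intersection}, whereas the paper simply asserts it; that is a worthwhile detail to include.
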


\begin{proof}
	To see that $u_{[e]}$ is self-adjoint, note that
	\begin{align*}
		u_{[e]}^*& = \sum_{h,k \in \Sigma_e} e_{k,h} \otimes 1_{Z(s(h\cdot e))} u^*_{k h^{-1}}\\ 
		&= \sum_{h,k \in \Sigma_e} e_{k,h} \otimes u^*_{k h^{-1}} 1_{Z(s(k\cdot e))}\\ 
		&= \sum_{h,k \in \Sigma_e} e_{k,h} \otimes u_{h k^{-1}} 1_{Z(s(k\cdot e))}\\ 
		&= u_{[e]}.
	\end{align*}
	To see that $u_{[e]}$ is unitary, first note that $C_{[e]}$ is unital with identity element $1_{[e]}$. Now
	\begin{align*}
		u_{[e]} u_{[e]}^* &= \left(\sum_{h,k \in \Sigma_e} e_{h,k} \otimes u_{k h^{-1}} 1_{Z(s(h\cdot e))}\right)\left(\sum_{k,l \in \Sigma_e} e_{k,l} \otimes 1_{Z(s(l\cdot e))} u^*_{k l^{-1}}\right) \\
		&= \sum_{h,l \in \Sigma_e} e_{h,l} \otimes \left( \sum_{k \in \Sigma_e} u_{k h^{-1}} 1_{Z(s(h\cdot e))}1_{Z(s(l\cdot e))} u^*_{k l^{-1}} \right).
	\end{align*}
	Note that the projections $\{1_{Z(s(h\cdot e))} : h \in \Sigma_e\}$ are pairwise orthogonal. So continuing the calculation, we use covariance to get
	\begin{align*}
		u_{[e]} u_{[e]}^* &= \sum_{h \in \Sigma_e} e_{h,h} \otimes \left( \sum_{k \in \Sigma_e} u_{k h^{-1}} 1_{Z(s(h\cdot e))} u^*_{k h^{-1}} \right) \\
		&= \sum_{h \in \Sigma_e} e_{h,h} \otimes \left( \sum_{k \in \Sigma_e} 1_{Z(s(k\cdot e))} u_{k h^{-1}} u^*_{k h^{-1}} \right) \\
		&= \sum_{h \in \Sigma_e} e_{h,h} \otimes \left( \sum_{k \in \Sigma_e} 1_{Z(s(k\cdot e))} \right) \\
		&= \sum_{h \in \Sigma_e} e_{h,h} \otimes 1_{[e]} \\
		&= 1_{\mathcal{K}(\ell^2(\Sigma_e)) \otimes C_{[e]}}.
	\end{align*}
	Since $u_{[e]}$ is self-adjoint, we also have that $u_{[e]}^* u_{[e]} = 1_{\mathcal{K}(\ell^2(\Sigma_e)) \otimes C_{[e]}}$.
\end{proof}

\begin{lemma} \label{lem:multitree theta_e}
	Let $G$ be a countable discrete group acting on a row-finite, finitely aligned multitree $E$ with no sources. For any $e \in E^1$, we have
	\[(l_{Z(s(e))})_*\circ(\theta_e)_* = (l_{[e]})_*.\]
\end{lemma}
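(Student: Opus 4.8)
The plan is to realise both sides of the identity as maps induced by honest $*$-homomorphisms, and then to intertwine them by the self-adjoint unitary $u_{[e]}$ of Lemma~\ref{lem:selfadjointunitary}.

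First I would observe that $(l_{Z(s(e))})_*\circ(\theta_e)_*$ is induced, via the stability isomorphism $K_*(\mathcal{K}(\ell^2(\Sigma_e))\otimes B)\cong K_*(B)$, by the composite $*$-homomorphism
\[\Theta_e:=(\id_{\mathcal{K}(\ell^2(\Sigma_e))}\otimes l_{Z(s(e))})\circ\theta_e\colon C^*_r(G_{r(e)})\to\mathcal{K}(\ell^2(\Sigma_e))\otimes\big(C_0(\Omega)\rtimes_{\tau,r}G\big),\]
which on generators reads $\Theta_e(u_g)=\sum_{h\in\Sigma_e}e_{h,g^{-1}\cdot h}\otimes 1_{Z(s(e))}u_{h^{-1}g(g^{-1}\cdot h)}$. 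Since $h^{-1}g(g^{-1}\cdot h)\in G_e\le G_{s(e)}$ and $1_{Z(s(e))}=1_{Z(s(1_G\cdot e))}$ is one of the generating projections of $C_{[e]}$, the image of $\Theta_e$ lies in $\mathcal{K}(\ell^2(\Sigma_e))\otimes C_{[e]}$. Dually, I would note that $(l_{[e]})_*$ is induced (again via stability, through the corner embedding $b\mapsto e_{1_G,1_G}\otimes b$) by the $*$-homomorphism $u_g\mapsto e_{1_G,1_G}\otimes 1_{[e]}u_g=e_{1_G,1_G}\otimes l_{[e]}(u_g)$, which likewise takes values in $\mathcal{K}(\ell^2(\Sigma_e))\otimes C_{[e]}$.

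The heart of the argument is the computation that conjugation by $u_{[e]}$ carries one of these homomorphisms to the other, i.e.
\[u_{[e]}\big(e_{1_G,1_G}\otimes 1_{[e]}u_g\big)u_{[e]}=\Theta_e(u_g)\qquad(g\in G_{r(e)}).\]
To verify this I would expand the triple product against $\sum_{h,k}e_{h,k}\otimes u_{kh^{-1}}1_{Z(s(h\cdot e))}$, repeatedly using the covariance relation $u_g 1_{Z(v)}=1_{Z(g\cdot v)}u_g$ together with the pairwise orthogonality of $\{1_{Z(s(h\cdot e))}:h\in\Sigma_e\}$ (both already exploited in the proof of Lemma~\ref{lem:selfadjointunitary}) to collapse the double sum to a single sum over $k\in\Sigma_e$ in which the surviving matrix unit is $e_{g\cdot k,k}$. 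Reindexing by the bijection $h=g\cdot k$ of $\Sigma_e$, where $g\cdot(-)$ is the $G_{r(e)}$-action on $\Sigma_e$ (so that $k=g^{-1}\cdot h$), then reproduces exactly the off-diagonal pattern $e_{h,g^{-1}\cdot h}$ and the group element $h^{-1}g(g^{-1}\cdot h)$ appearing in $\Theta_e(u_g)$. This bookkeeping with cosets, representatives, and the induced action on $\Sigma_e$ is where I expect the main (though purely computational) difficulty to lie; the facts that make it go through are that $g\cdot k$ and $gk$ label the same coset and hence the same source cylinder, and that $(g\cdot k)^{-1}\cdot s(gk\cdot e)=s(e)$.

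Finally I would conclude by a $K$-theoretic triviality. Since $C_{[e]}$ is unital (with unit $1_{[e]}$), the self-adjoint unitary $u_{[e]}$ lies in the unital algebra $\mathcal{K}(\ell^2(\Sigma_e))\otimes C_{[e]}$, so $\Ad u_{[e]}$ is an inner automorphism and induces the identity on $K$-theory. The displayed intertwining identity says $\Theta_e=\Ad u_{[e]}\circ(e_{1_G,1_G}\otimes l_{[e]})$ as maps into $\mathcal{K}(\ell^2(\Sigma_e))\otimes C_{[e]}$; composing with the inclusion into $\mathcal{K}(\ell^2(\Sigma_e))\otimes(C_0(\Omega)\rtimes_{\tau,r}G)$ and passing to $K$-theory therefore gives $(\Theta_e)_*=(e_{1_G,1_G}\otimes l_{[e]})_*$. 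Translating both sides back through the stability isomorphisms identified in the first step yields $(l_{Z(s(e))})_*\circ(\theta_e)_*=(l_{[e]})_*$, as required.
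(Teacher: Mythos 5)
Your proposal is correct and takes essentially the same route as the paper's own proof: the paper likewise restricts codomains to $\mathcal{K}(\ell^2(\Sigma_e))\otimes C_{[e]}$, uses the corner embedding $a\mapsto e_{1,1}\otimes a$ together with the self-adjoint unitary $u_{[e]}$ of Lemma~\ref{lem:selfadjointunitary}, and verifies exactly your intertwining identity (packaged there as commutativity of the diagram in Equation~\eqref{eq:theta_e diagram}, with the map $\varphi_e(a)=u_{[e]}(e_{1,1}\otimes a)u_{[e]}^*$ inducing the identity on $K$-theory). Your computational details, including the coset bookkeeping showing the double sum collapses along $k=g^{-1}\cdot h$, match the paper's argument.
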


\begin{proof}
	Fix $e \in E^1$. Write $l'_{[e]}\colon C^*_r(G_{r(e)}) \to C_{[e]}$ for the map defined by $u_g \mapsto 1_{[e]}u_g$ (this is just $l_{[e]}$ with codomain restricted to $C_{[e]}$), and write $\iota_{[e]}$ for the inclusion $C_{[e]} \into C_0(\Omega) \rtimes_r G$. We need to show that
	\[(l_{Z(s(e))})_*\circ(\theta_e)_* = (\iota_{[e]})_* \circ (l'_{[e]})_*.\]
	Write $\varphi_e$ for the map
	\[\varphi_e \colon C_{[e]} \to \mathcal{K}(\ell^2(\Sigma_e)) \otimes C_{[e]}, \qquad a \mapsto u_{[e]}(e_{1,1} \otimes a)u_{[e]}^*,\]
	where $u_{[e]}$ is the self-adjoint unitary from Lemma~\ref{lem:selfadjointunitary} and where $1 \in \Sigma_e$ denotes the identity of $G_{r(e)}$. Since $\varphi_e$ is the composition of $a \mapsto e_{1,1} \otimes a$ and an inner automorphism, both of which induce the identity map on $K$-theory, the map $\varphi_e$ itself induces the identity map
	\[(\varphi_e)_* = \id\colon K_*(C_{[e]}) \to K_*(C_{[e]}).\]
	So it is enough to show that
    \begin{equation} \label{eq:induced maps on K-theory}
        (l_{Z(s(e))})_*\circ(\theta_e)_* = (\iota_{[e]})_* \circ (\varphi_e)_* \circ (l'_{[e]})_*.
    \end{equation}
	To do this, we show that the following diagram is commutative; Equation~\eqref{eq:induced maps on K-theory} then follows from the commutativity of the induced diagram on $K$-theory:
	\begin{equation} \label{eq:theta_e diagram}
		\begin{tikzcd}
			C^*_r(G_{r(e)}) \arrow[rr,"\theta_e"] \arrow[d, "l'_{[e]}"] & & \mathcal{K}(\ell^2(\Sigma_e)) \otimes C^*_r(G_{s(e)}) \arrow[d,"\id \otimes l_{Z(s(e))}"] \\
			C_{[e]} \arrow[r,"\varphi_e"] & \mathcal{K}(\ell^2(\Sigma_e)) \otimes C_{[e]} \arrow[r,"\id \otimes \iota_{[e]}"]& \mathcal{K}(\ell^2(\Sigma_e)) \otimes (C_0(\Omega) \rtimes_r G).
		\end{tikzcd}
	\end{equation}
	Consider the composition $(\id \otimes \iota_{[e]}) \circ \varphi_e \circ l'_{[e]}\colon C^*_r(G_{r(e)}) \to \mathcal{K}(\ell^2(\Sigma_e)) \otimes (C_0(\Omega) \rtimes_r G)$. For $g \in G_{r(e)}$, we have
	\begin{align*}
		((\id \otimes \iota_{[e]}) \circ \varphi_e \circ l'_{[e]})(u_g) &= ((\id \otimes \iota_{[e]}) \circ \varphi_e)(1_{[e]}u_g) \\
		&= (\id \otimes \iota_{[e]})(u_{[e]} (e_{1,1} \otimes 1_{[e]}u_g) u_{[e]}^*) \\
		&= \sum_{h,k \in \Sigma_e} e_{h,k} \otimes 1_{Z(s(e))} u_{h^{-1}gk} 1_{Z(s(e))} \\
		&= \sum_{h,k \in \Sigma_e} e_{h,k} \otimes 1_{Z(s(e))} 1_{Z(s(h^{-1}gk\cdot e))} u_{h^{-1}gk}.
	\end{align*}
	Note that $1_{Z(s(e))} 1_{Z(s(h^{-1}gk\cdot e))}$ is non-zero if and only if $h^{-1}gk \in G_e$, in which case it is equal to $1_{Z(s(e))}$. But $h^{-1}gk \in G_e$ if and only if $k = g^{-1} \cdot h$ as elements of $\Sigma_e$, so continuing the calculation, we have
	\begin{align*}
		((\id \otimes \iota_{[e]}) \circ \varphi_e \circ l'_{[e]})(u_g) &= \sum_{h \in \Sigma_e} e_{h, g^{-1}\cdot h} \otimes 1_{Z(s(e))} u_{h^{-1}g(g^{-1}\cdot h)} \\
		&= ((\id \otimes l_{Z(s(e))}) \circ \theta_e)(u_g).
	\end{align*}
	Since the maps $(\id \otimes \iota_{[e]}) \circ \varphi_e \circ l'_{[e]}$ and $(\id \otimes l_{Z(s(e))}) \circ \theta_e$ agree on a generating set of a dense sub-$C^*$-algebra of $C^*_r(G_v)$, they are equal, which proves that the diagram Equation~\eqref{eq:theta_e diagram} is commutative.
\end{proof}

We are now ready to prove Proposition~\ref{prop:multitree iota_* diagram}, and then complete the proof of Theorem~\ref{thm:multitree sequence}.

\begin{proof}[Proof of Proposition~\ref{prop:multitree iota_* diagram}]
	Fix $v \in E^0$. Note that
	\[1_{Z(v)} = \xi_v + \sum_{[e] \in G_v\backslash vE^1} 1_{[e]},\]
    so the *-homomorphisms $\iota \circ l_v$ and $l_{[e]}$, $[e] \in G_v\backslash vE^1$ from $C^*_r(G_v) \to C_0(\Omega) \rtimes_{\tau,r} G$ are mutually orthogonal. Thus the induced homomorphisms in $K$-theory satisfy
	\[\iota_* \circ (l_v)_* = (\iota \circ l_v)_* = (l_{Z(v)})_* - \sum_{[e] \in G_v\backslash vE^1} (l_{[e]})_*.\]
	Now, Proposition~\ref{lem:multitree theta_e} says that for all $e \in vE^1$, we have
	\[(l_{Z(s(e))})_*\circ(\theta_e)_* = (l_{[e]})_*.\]
	Summing over a set of representatives of $G_v\backslash vE^1$, we get that
	\[\left(\bigoplus_{[w]\in G\backslash E^0} (l_{Z(w)})_*\right)\circ\left(\sum_{[e] \in G_v\backslash vE^1}(\theta_e)_*\right) = \sum_{[e] \in G_v\backslash vE^1}(l_{[e]})_*.\]
	Note also that, if we write $\iota_v$ for the canonical inclusion of $K_*(C^*_r(G_v))$ into the direct sum $\oplus_{[w]} K_*(C^*_r(G_w))$, we trivially have that
	\[\left(\bigoplus_{[w]\in G\backslash E^0} (l_{Z(w)})_*\right) \circ \iota_v = (l_{Z(v)})_*.\]
	Putting the above two equations together gives that
	\[\left(\bigoplus_{[w]\in G\backslash E^0} (l_{Z(w)})_*\right)\circ\left(\iota_v - \sum_{[e] \in G_v\backslash vE^1}(\theta_e)_*\right) = (l_{Z(v)})_* - \sum_{[e] \in G_v\backslash vE^1}(l_{[e]})_* = \iota_* \circ (l_v)_*.\]
	Note that $\oplus_{[v] \in G\backslash E^0} \iota_v$ is the identity map on $\oplus_{[v] \in G\backslash E^0} K_*(C^*_r(G_v))$, so taking a direct sum of the above equation over $[v] \in G\backslash E^0$ (now allowing $v$ to vary) gives
	\[\left(\bigoplus_{[w]\in G\backslash E^0} (l_{Z(w)})_*\right)\circ\left(\id - \bigoplus_{[v] \in G\backslash E^0}\sum_{[e] \in G_v\backslash vE^1}(\theta_e)_*\right) = \iota_* \circ \left(\bigoplus_{[v] \in G\backslash E^0}(l_v)_*\right).\]
	Now all that remains to be shown is that, for $i=0,1$,
	\[\bigoplus_{[v] \in G\backslash E^0}\sum_{[e] \in G_v\backslash vE^1}(\theta_e)_{*,i} = \alpha_i = \sum_{[e] \in G\backslash E^1}(\theta_e)_{*,i},\]
	which follows from the fact that $G_v\backslash vE^1 \cong \{[e] \in G\backslash E^1 : r(e) \in G\cdot v\}$.
\end{proof}

\begin{proof}[Proof of Theorem~\ref{thm:multitree sequence}]
	Since $\partial E$ is a closed subspace of $\Omega$ and $E^0$ is its complement, we have a short exact sequence
	\[0 \to C_0(E^0) \to C_0(\Omega) \to C_0(\partial E) \to 0,\]
	which induces a short exact sequence of full crossed products
	\[0 \to C_0(E^0) \rtimes_{\tau_0} G \xrightarrow{\iota} C_0(\Omega) \rtimes_{\tau} G \to C_0(\partial E) \rtimes_{\tau_\partial} G \to 0.\]
	Now note the following. By Remark~\ref{rem:amenabilityassumptions}, amenability of the action $G \curvearrowright \Omega$ implies that the actions $G \curvearrowright E^0$ and $G \curvearrowright \partial E$ are amenable. Then \cite[Theorem~5.15]{BEW} gives that all three actions $\tau_0$, $\tau$ and $\tau_\partial$ of $G$ on $C_0(E^0)$, $C_0(\Omega)$ and $C_0(\partial E)$ respectively are amenable. Finally, since $G$ is discrete, amenability of these actions implies that the corresponding full crossed products are isomorphic to the respective reduced crossed products \cite[Proposition 4.8]{AD}. Altogether, this implies that the sequence of reduced crossed products
	\[0 \to C_0(E^0) \rtimes_{\tau_0,r} G \xrightarrow{\iota} C_0(\Omega) \rtimes_{\tau,r} G \to C_0(\partial E) \rtimes_{\tau_\partial,r} G \to 0\]
	is also exact, where $\iota\colon C_0(E^0) \rtimes_r G \to C_0(\Omega) \rtimes_r G$ is the canonical inclusion. This gives the following six-term exact sequence in $K$-theory:
	\[\begin{tikzcd}
		K_0(C_0(E^0) \rtimes_{\tau_0,r} G)
		\arrow[r,"\iota_{*,0}"] &
		K_0(C_0(\Omega) \rtimes_{\tau,r} G) \arrow[r] &
		K_0(C_0(\partial E) \rtimes_{\tau_\partial,r} G) \arrow[d] \\
		K_1(C_0(\partial E) \rtimes_{\tau_\partial,r} G) \arrow[u] &
		K_1(C_0(\Omega) \rtimes_{\tau,r} G) \arrow[l] &
		K_1(C_0(E^0) \rtimes_{\tau_0,r} G)
		\arrow[l,"\iota_{*,1}"]
	\end{tikzcd}\]
	Then Propositions~\ref{prop:multitree Omega K-theory} and~\ref{prop:multitree iota_* diagram} together imply the result.
\end{proof}

\begin{remark}
	We note that the amenability assumptions in Theorem~\ref{thm:multitree sequence} are probably stronger than necessary for the statement. All that is strictly necessary for the proof is that $G$ satisfies the Baum--Connes conjecture with coefficients in $C_0(E^0)$ and $C_0(\Omega)$ (so that Theorem~\ref{thm:mainCELthm} can be applied), and that the sequence of crossed products
	\[0 \to C_0(E^0) \rtimes_{\tau_0,r} G \xrightarrow{\iota} C_0(\Omega) \rtimes_{\tau,r} G \to C_0(\partial E) \rtimes_{\tau_\partial,r} G \to 0\]
	is exact.
\end{remark}

\subsection{Example: trivial vertex stabilisers}\label{subsec:trivialvertexstabs}

In this subsection and the next, we provide more specialised $K$-theory formulas for the crossed product $C_0(\partial E) \rtimes_{\tau_\partial,r} G$ in the case where all vertex stabilisers are trivial (equivalently where $G$ acts freely on $E$), and in the case where all vertex stabilisers are infinite cyclic.

\begin{thm}\label{thm:matrixformulas}
	Let $G$ be a countable discrete group acting \textit{freely} on a row-finite, finitely aligned multitree $E$ with no sources. Suppose that the action $G \curvearrowright \Omega$ is amenable. Write $\Gamma := G\backslash E$ for the quotient directed graph, and let $A \in M_{\Gamma^0}(\Z)$ be the adjacency matrix of $\Gamma$ (defined by $A_{v,w} = \# v\Gamma^1 w$). The $K$-theory of the reduced crossed product $C_0(\partial E) \rtimes_{\tau_\partial,r} G$ is given by
	\[K_0(C_0(\partial E) \rtimes_{\tau_\partial,r} G) \cong \coker(1-A^T) \quad \text{and} \quad K_1(C_0(\partial E) \rtimes_{\tau_\partial,r} G) \cong \ker(1-A^T),\]
    where we consider $1-A^T$ to be a homomorphism $\Z[\Gamma^0] \to \Z[\Gamma^0]$.
\end{thm}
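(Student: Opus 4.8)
The plan is to specialise the six-term exact sequence of \thmref{thm:multitree sequence} to the case of trivial vertex stabilisers. Since $G$ acts freely, every stabiliser $G_v$ is trivial, so $C^*_r(G_v)=\C$, and hence $K_0(C^*_r(G_v))\cong\Z$ while $K_1(C^*_r(G_v))=0$. Identifying $G\backslash E^0$ with $\Gamma^0$, both direct sums $\bigoplus_{[v]}K_0(C^*_r(G_v))$ become $\Z[\Gamma^0]$, and both direct sums $\bigoplus_{[v]}K_1(C^*_r(G_v))$ vanish. Feeding this into \thmref{thm:multitree sequence} collapses the hexagon: the two $K_1$-corners are zero, so the sequence degenerates into the four-term exact sequence
\[
0 \to K_1(C_0(\partial E)\rtimes_{\tau_\partial,r}G) \to \Z[\Gamma^0] \xrightarrow{\id-\alpha_0} \Z[\Gamma^0] \to K_0(C_0(\partial E)\rtimes_{\tau_\partial,r}G) \to 0.
\]
Reading off the kernel and cokernel of $\id-\alpha_0$ immediately gives $K_1\cong\ker(\id-\alpha_0)$ and $K_0\cong\coker(\id-\alpha_0)$.

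The remaining task is to identify $\id-\alpha_0$ with $1-A^T$. For this I would compute the map $\theta_e$ of Notation~\ref{not:thetaetc} in the free case. With all stabilisers trivial we have $G_{r(e)}=G_e=\{1\}$, so the transversal $\Sigma_e=\{1_G\}$ is a single point, $\ell^2(\Sigma_e)=\C$ and $\KK(\ell^2(\Sigma_e))=\C$; the defining formula for $\theta_e$ then reduces to the identity map $\C\to\C$. Hence, viewing $(\theta_e)_{*,0}$ as an endomorphism of $\Z[\Gamma^0]$ as in the discussion following \lemref{lem:theta_e diagram}, it sends the generator $\delta_{[r(e)]}$ to $\delta_{[s(e)]}$ and annihilates every other generator. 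Summing over a transversal for $G\backslash E^1=\Gamma^1$ and using the bijection $G_v\backslash vE^1\cong\{[e]\in\Gamma^1 : r([e])=[v]\}$, I obtain
\[
\alpha_0(\delta_v) = \sum_{\substack{e\in\Gamma^1 \\ r(e)=v}} \delta_{s(e)} = \sum_{w\in\Gamma^0}(\# v\Gamma^1 w)\,\delta_w = \sum_{w} A_{v,w}\,\delta_w,
\]
so the matrix of $\alpha_0$ in the basis $\{\delta_v\}$ has $(w,v)$-entry $A_{v,w}$, that is, $\alpha_0=A^T$. Therefore $\id-\alpha_0=1-A^T$, and the two claimed isomorphisms follow.

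The only genuine subtlety here is the transpose bookkeeping: one must track carefully that $(\theta_e)_{*,0}$ maps \emph{out of} the $[r(e)]$-summand and \emph{into} the $[s(e)]$-summand, and then match this against the range/source convention $A_{v,w}=\# v\Gamma^1 w$ used to define the adjacency matrix. There is no analytic difficulty remaining, since all the $C^*$-algebraic work has already been carried out in \thmref{thm:multitree sequence}; once that theorem is in hand, the argument is purely a matter of substituting $C^*_r(G_v)=\C$ and verifying that the connecting homomorphism $\alpha_0$ is the transpose of the adjacency matrix.
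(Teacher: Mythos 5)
Your proposal is correct and follows essentially the same route as the paper's own proof: specialise Theorem~\ref{thm:multitree sequence} to trivial stabilisers so that the $K_1$-corners vanish, then identify $\alpha_0$ with left multiplication by $A^T$. Your explicit verification that $\Sigma_e$ is a singleton (so $\theta_e$ reduces to the identity) and your transpose bookkeeping simply fill in details the paper leaves as ``easy to see.''
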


\begin{proof}
	Since $G$ acts freely on $E$, all stabiliser subgroups are trivial. Note that the (reduced) $C^*$-algebra of the trivial group is simply $\C$, and so for all $v \in E^0$, we have $C^*_r(G_v) \cong \C$. So (writing $1_v$ for the unique element in $G_v$) $K_0(C^*_r(G_v)) = \Z[1_v]$ and $K_1(C^*_r(G_v)) = 0$ for all $v \in E^0$. So applying Theorem~\ref{thm:multitree sequence}, we get the six-term exact sequence
	\[\begin{tikzcd}
		\bigoplus_{[v]\in G\backslash E^0} \Z[1_v] \arrow[r,"\id-\alpha_0"] &
		\bigoplus_{[v]\in G\backslash E^0} \Z[1_v] \arrow[r] &
		K_0(C_0(\partial E) \rtimes_{\tau_\partial,r} G) \arrow[d] \\
		K_1(C_0(\partial E) \rtimes_{\tau_\partial,r} G) \arrow[u] &
		0 \arrow[l] &
		0, \arrow[l]
	\end{tikzcd}\]
	where
	\[\alpha_0 = \sum_{[e]\in G\backslash E^1} (\theta_e)_{*,0}.\]
	For each $e \in E^1$, the map $\theta_e$ sends $1_{r(e)}$ to $1_{s(e)}$, and so the induced map on $K_0$ satisfies $(\theta_e)_{*,0}([1_{r(e)}]) = [1_{s(e)}]$. From this, it is easy to see that $\alpha_0$ is exactly (left multiplication by the matrix) $A^T$.
\end{proof}

\begin{remark}
    We can recover the well-known formula for the $K$-theory of directed graph $C^*$-algebras \cite{RaeSzy} (at least in the row-finite, no sources case) from the above theorem, using that a directed graph algebra is Morita equivalent to the crossed product of a free group acting on the boundary of the covering tree of the graph \cite[Corollary 4.14]{Kumjian-Pask}. The underlying action of the group on the covering tree is free \cite[Lemma~4.10]{Kumjian-Pask}, which implies that all vertex stabilisers are trivial; Proposition~\ref{prop:directed tree amenability} then gives that the action on the boundary is also amenable. Finally, Remark~\ref{rem:amenabilityassumptions} implies that the action on $\Omega$ is amenable, thereby allowing us to apply Theorem~\ref{thm:matrixformulas}.
\end{remark}

\subsection{Example: infinite cyclic vertex stabilisers}\label{subsec:icvertexstabs}

We now move to the case where all vertex stabilisers (and hence all edge stabilisers) are isomorphic to $\Z$. First we introduce some notation.

\begin{notation}\label{not:omegas}
    Let $G$ be a discrete group acting on a row-finite, finitely aligned multitree $E$ with no sources. For any path (finite or infinite) $\alpha$ in $E$, write $G_\alpha$ for the subgroup of $G$ which fixes $\alpha$. We write $\Gamma := G \backslash E$ for the quotient directed graph. For each vertex $v \in \Gamma^0$ we choose a lift $v' \in E^0$ of $v$, and for each edge $e \in \Gamma^1$ we choose a lift $e' \in E^1$ of $e$. For each $v \in \Gamma^0$ and $e \in \Gamma^1$ we define groups $G_v$ and $G_e$ by $G_v := G_{v'}$ and $G_e := G_{e'}$.
    
    For each $e \in \Gamma^1$, the group $G_e$ embeds into the group $G_{r(e)}$ in the following way. Write $v = r(e)$, and let $v' \in E^0$ be the chosen lift of $v$. The vertex $r(e') \in E^0$ is also a lift of $v$, and so will be in the same vertex orbit as $v$. In particular, there is some group element $g \in G$ such that $g \cdot r(e') = v'$. Then $G_{v'} = g G_{r(e')} g^{-1}$, and so we can define an injective homomorphism $G_e \into G_{r(e)}$ by $h \mapsto ghg^{-1}$. Replacing `$r$'s with `$s$'s in the above argument shows that $G_e$ also embeds into $G_{s(e)}$ in a similar way. We will often identify $G_e$ with its image in $G_{r(e)}$ or $G_{s(e)}$ under the respective embeddings.
    
    We now introduce additional notation for the special situation where for each $v' \in E^0$, the group $G_{v'}$ is infinite cyclic. For $e \in \Gamma^1$, we have by [Section~\ref{subsec:groupactionsongraphs}(3)] that $G_e$ is a subgroup of $G_{r(e)}$ of finite index, and hence is also infinite cyclic. For each $v \in \Gamma^0$ choose a generator $1_{v'}$ of $G_{v'} \cong \Z$ (where $v'$ is the chosen lift of $v$). Similarly, for each edge $e \in \Gamma^1$ we choose a generator $1_{e'}$ of $G_{e'} \cong \Z$ (where $e'$ is the chosen lift of $e$).
    
    Finally, to each $e \in \Gamma^1$ we associate integers $\omega_e, \omega_{\overline{e}}$ in the following way: we denote by $\omega_e$ the element of $\Z$ such that the embedding of $G_e$ into $G_{r(e)}$ sends $1_{e'}$ to $\omega_e 1_{r(e)'}$, and similarly we denote by $\omega_{\overline{e}}$ the element of $\Z$ such that the embedding of $G_e$ into $G_{s(e)}$ sends $1_{e'} $ to $\omega_{\overline{e}} 1_{s(e)'}$.
\end{notation}

\begin{thm} \label{thm:directed GBS Ktheory}
	Let $G$ be a countable discrete group acting on a row-finite, finitely aligned multitree $E$ with no sources. Suppose that for each $v \in E^0$, the stabiliser $G_v \leq G$ is isomorphic to $\Z$. Suppose further that the action $G \curvearrowright \Omega$ is amenable. Define maps $A_0, A_1 \colon \Z[\Gamma^0] \to \Z[\Gamma^0]$ by
    \[A_0(v) = \sum_{e \in v\Gamma^1} \abs{\omega_e}s(e) \qquad \text{and} \qquad A_1(v) = \sum_{e \in v\Gamma^1} \sgn(\omega_e)\omega_{\overline{e}}s(e)\]
    for $v \in \Gamma^0$. We have
	\[K_0(C_0(\partial E) \rtimes_{\tau_\partial,r} G) \cong \coker(1-A_0) \oplus \ker(1-A_1)
	\]
	 and
    \[ K_1(C_0(\partial E) \rtimes_{\tau_\partial,r} G) \cong \coker(1-A_1) \oplus \ker(1-A_0).\]
\end{thm}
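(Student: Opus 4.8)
The plan is to apply Theorem~\ref{thm:multitree sequence} and then compute the maps $\id-\alpha_0$ and $\id-\alpha_1$ explicitly under the hypothesis that every vertex stabiliser is infinite cyclic. Since $G_v \cong \Z$ for each $v$, we have $C^*_r(G_v) \cong C(\T)$, whose $K$-theory is $K_0(C(\T)) \cong \Z$ and $K_1(C(\T)) \cong \Z$, with $K_0$ generated by the class of the unit and $K_1$ generated by the class of the canonical unitary $u_{1_{v'}}$. So both $\bigoplus_{[v]} K_0(C^*_r(G_v))$ and $\bigoplus_{[v]} K_1(C^*_r(G_v))$ are identified with $\Z[\Gamma^0]$. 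The first task is therefore to compute, for each edge $e \in \Gamma^1$, the induced maps $(\theta_e)_{*,0}$ and $(\theta_e)_{*,1}$ on these two $K$-groups in terms of the integers $\omega_e$ and $\omega_{\overline e}$ from Notation~\ref{not:omegas}.

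\textbf{Computing the induced maps.} Fix $e \in \Gamma^1$ with lift $e'$, and recall the formula $\theta_{e'}(u_g) = \sum_{h \in \Sigma_{e'}} e_{h,g^{-1}\cdot h} \otimes u_{h^{-1}g(g^{-1}\cdot h)}$. Writing $v = r(e)$ and $w = s(e)$, the source embedding $G_{e'} \into G_{w'}$ sends the generator $1_{e'}$ to $\omega_{\overline e} 1_{w'}$, and the index $[G_{v'} : G_{e'}] = \abs{\omega_e}$ equals $\abs{\Sigma_{e'}}$. To compute $(\theta_{e'})_*$ I would evaluate $\theta_{e'}$ on the generator $u_{1_{v'}}$ of $C^*_r(G_{v'})$. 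On $K_0$: after composing with the trace (or equivalently the map $\KK(\ell^2(\Sigma_{e'})) \otimes C(\T) \to C(\T)$ induced by a rank-one projection) one sees that $\theta_{e'}$ sends the class of the unit to $\abs{\Sigma_{e'}} = \abs{\omega_e}$ times the class of the unit; this yields $(\theta_e)_{*,0}(v) = \abs{\omega_e}\,w$, matching $A_0$. On $K_1$: the matrix part $\sum_h e_{h,g^{-1}\cdot h}$ for $g = 1_{v'}$ is (up to a permutation) a cyclic permutation matrix of the $\abs{\omega_e}$-element transversal with a single ``wrap-around'' twist by $u_{1_{e'}} = u_{\omega_{\overline e} 1_{w'}}$, together with the sign of the permutation governed by the orientation $\sgn(\omega_e)$. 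Tracking the winding number through this block shows $(\theta_e)_{*,1}$ sends the $K_1$-generator at $v$ to $\sgn(\omega_e)\,\omega_{\overline e}$ times the $K_1$-generator at $w$, matching $A_1$. Thus $\alpha_0 = A_0$ on $\Z[\Gamma^0] \cong \bigoplus_{[v]} K_0$ and $\alpha_1 = A_1$ on $\Z[\Gamma^0] \cong \bigoplus_{[v]} K_1$.

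\textbf{Assembling the exact sequence.} With these identifications, Theorem~\ref{thm:multitree sequence} becomes the six-term sequence
\[\begin{tikzcd}
\Z[\Gamma^0] \arrow[r,"\id-A_0"] & \Z[\Gamma^0] \arrow[r] & K_0(C_0(\partial E)\rtimes_{\tau_\partial,r} G) \arrow[d] \\
K_1(C_0(\partial E)\rtimes_{\tau_\partial,r} G) \arrow[u] & \Z[\Gamma^0] \arrow[l] & \Z[\Gamma^0], \arrow[l,"\id-A_1"]
\end{tikzcd}\]
where I should be careful that the $K_1$-row carries the map $\id - A_1$. The sequence splits into two short exact sequences: from the $K_0$ corner,
\[0 \to \coker(1-A_0) \to K_0(C_0(\partial E)\rtimes_{\tau_\partial,r} G) \to \ker(1-A_1) \to 0,\]
and symmetrically from the $K_1$ corner,
\[0 \to \coker(1-A_1) \to K_1(C_0(\partial E)\rtimes_{\tau_\partial,r} G) \to \ker(1-A_0) \to 0.\]
Because $\ker(1-A_1)$ and $\ker(1-A_0)$ are subgroups of the free abelian group $\Z[\Gamma^0]$, they are free, so both extensions split, yielding the stated direct-sum decompositions.

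\textbf{The main obstacle} will be the $K_1$-computation of $(\theta_e)_{*,1}$: correctly identifying both the factor $\omega_{\overline e}$ coming from the twisting unitary $u_{1_{e'}}$ and the sign $\sgn(\omega_e)$ coming from the permutation of the transversal $\Sigma_{e'}$ requires carefully tracking the winding number of the $\abs{\omega_e}\times\abs{\omega_e}$ block unitary $\sum_h e_{h,1_{v'}^{-1}\cdot h} \otimes u_{h^{-1}1_{v'}(1_{v'}^{-1}\cdot h)}$ in $K_1(\KK \otimes C(\T))$. The orientation subtlety—whether the generator $1_{v'}$ acts on the coset space $G_{v'}/G_{e'}$ as an increasing or decreasing cycle—is exactly what produces the sign $\sgn(\omega_e)$, and getting this sign right is the delicate point of the argument.
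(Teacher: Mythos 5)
Your overall architecture coincides with the paper's: identify $K_0(C^*_r(G_v)) \cong K_1(C^*_r(G_v)) \cong \Z$, apply Theorem~\ref{thm:multitree sequence}, identify $\alpha_i$ with $A_i$ under $\bigoplus_{[v]} K_i(C^*_r(G_v)) \cong \Z[\Gamma^0]$, and split the two resulting short exact sequences using freeness of subgroups of $\Z[\Gamma^0]$. The $K_0$ computation and the splitting step are fine. However, there is a genuine flaw in the mechanism you propose for the $K_1$ computation---precisely the step you flag as the delicate point. You claim the wrap-around twist is by $u_{1_{e'}} = u_{\omega_{\overline{e}}1_{w'}}$ and that the sign $\sgn(\omega_e)$ is then contributed by ``the sign of the permutation'' of $\Sigma_{e'}$, equivalently by whether $1_{v'}$ acts on $G_{v'}/G_{e'}$ as an increasing or decreasing cycle. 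Neither mechanism can produce that sign. First, the permutation part of $\theta_{e'}(u_{1_{v'}})$ is a scalar-entried unitary, hence homotopic to the identity in the path-connected group $U_{\abs{\omega_e}}(\C)$ and invisible to $K_1$; in any case the sign of a cyclic permutation of $\abs{\omega_e}$ points is $(-1)^{\abs{\omega_e}-1}$, which has nothing to do with $\sgn(\omega_e)$. Second, the coset space $G_{v'}/G_{e'} \cong \Z/\abs{\omega_e}\Z$ does not remember the sign of $\omega_e$ at all (the subgroup $\omega_e\Z$ equals $\abs{\omega_e}\Z$), and the generator always acts by translation by one, so there is no ``increasing versus decreasing'' dichotomy. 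As written, your tracking would yield $\omega_{\overline{e}}$ with either no sign or a spurious sign $(-1)^{\abs{\omega_e}-1}$.

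The sign in fact enters through group-element bookkeeping, not through the permutation, and this is how the paper's proof proceeds. Taking $\Sigma_{e'} = \{0,1,\dots,\abs{\omega_e}-1\}$, one computes
\[
\theta_{e'}(u_{1_{v'}}) = e_{0,\abs{\omega_e}-1} \otimes u_{\abs{\omega_e}1_{v'}} + \sum_{h=1}^{\abs{\omega_e}-1} e_{h,h-1} \otimes u_0,
\]
so the wrap-around twist is by $u_{\abs{\omega_e}1_{v'}}$, not by $u_{1_{e'}}$: going once around the $\abs{\omega_e}$ cosets accumulates the element $\abs{\omega_e}1_{v'} \in G_{e'}$, and since the range embedding sends $1_{e'} \mapsto \omega_e 1_{v'}$ we have $\abs{\omega_e}1_{v'} = \sgn(\omega_e)1_{e'}$, which the source embedding sends to $\sgn(\omega_e)\omega_{\overline{e}}1_{w'}$. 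It is this identity, and only this identity, that produces the sign. After absorbing the homotopically trivial constant permutation, $\theta_{e'}(u_{1_{v'}}) \sim_h \diag\bigl(u_{\sgn(\omega_e)\omega_{\overline{e}}}, u_0, \dots, u_0\bigr)$, giving $(\theta_e)_{*,1} = \sgn(\omega_e)\omega_{\overline{e}}$ as required. So your final formulas are correct, but the justification of the $K_1$ step needs to be replaced by this computation.
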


\begin{proof}
	Recall that $K_0(C^*_r(\Z)) = \Z[u_0]_0$ and $K_1(C^*_r(\Z)) = \Z[u_1]_1$. For $v \in E^0$ and $n \in \Z$, write $u^{(v)}_n \in C^*_r(G_v)$ for the unitary representation of $n \in \Z \cong G_v$; we omit the superscript `$(v)$' when it is clear what the vertex is. So for all $v \in E^0$, we have
	\[K_0(C^*_r(G_v)) = \Z[u^{(v)}_0]_0, \qquad K_1(C^*_r(G_v)) = \Z[u^{(v)}_1]_1.\]
	From Theorem~\ref{thm:multitree sequence}, we get the six-term exact sequence
	\[\begin{tikzcd}
		\bigoplus_{[v]\in G\backslash E^0} \Z[u^{(v)}_0]_0 \arrow[r,"\id-\alpha_0"] &
		\bigoplus_{[v]\in G\backslash E^0} \Z[u^{(v)}_0]_0 \arrow[r] &
		K_0(C_0(\partial E) \rtimes_{\tau_\partial,r} G) \arrow[d] \\
		K_1(C_0(\partial E) \rtimes_{\tau_\partial,r} G) \arrow[u] &
		\bigoplus_{[v]\in G\backslash E^0} \Z[u^{(v)}_1]_1 \arrow[l] &
		\bigoplus_{[v]\in G\backslash E^0} \Z[u^{(v)}_1]_1, \arrow[l,"\id-\alpha_1"]
	\end{tikzcd}\]
	where
	\[\alpha_i = \sum_{[e]\in G\backslash E^1} (\theta_e)_{*,i}.\]
	Now $\theta_e$ behaves in the following way. Fix $e \in E^1$. We know that $G_e$ embeds into $G_{r(e)}$ as the subgroup $\omega_e G_{r(e)}$, so we can choose $\Sigma_e$ to be the set $\{0, 1, \dots, \abs{\omega_e} - 1\}$ (where we have identified $G_{r(e)}$ with $\Z$). Then the map
    \[ \theta_e \colon C^*_r(G_{r(e)}) \to \mathcal{K}(\ell^2(\Sigma_e)) \otimes C^*_r(G_{s(e)}) \cong M_{\abs{\omega_e}}(C^*_r(G_{s(e)})) \]
    satisfies
    \[ \theta_e(u^{(r(e))}_0) = \sum_{h=0}^{\abs{\omega_e}-1} e_{h,h} \otimes u^{(s(e))}_0 = \diag(u^{(s(e))}_0, \dots, u^{(s(e))}_0) \]
    and
    \[ \theta_e(u^{(r(e))}_1) = \sum_{h=0}^{\abs{\omega_e}-1} e_{h,(-1)\cdot h} \otimes u^{(s(e))}_{-h+1+((-1)\cdot h)} = e_{0,\abs{\omega_e}-1} \otimes u^{(s(e))}_{\abs{\omega_e}1_{r(e)}} + \sum_{h=1}^{\abs{\omega_e}-1} e_{h,h-1} \otimes u^{(s(e))}_0. \]
    Now note that $\abs{\omega_e}1_{r(e)} = \frac{\abs{\omega_e}}{\omega_e}1_e = \sgn(\omega_e)1_e = \sgn(\omega_e)\omega_{\overline{e}}1_{s(e)}$, so $u^{(s(e))}_{\abs{\omega_e}1_{r(e)}} = u^{(s(e))}_{\sgn(\omega_e)\omega_{\overline{e}}}$. This means that
    \[ \theta_e(u^{(r(e))}_1) = e_{0,\abs{\omega_e}-1} \otimes u^{(s(e))}_{\sgn(\omega_e)\omega_{\overline{e}}} + \sum_{h=1}^{\abs{\omega_e}-1} e_{h,h-1} \otimes u^{(s(e))}_0 = 
        \begin{pmatrix}0 & u^{(s(e))}_{\sgn(\omega_e)\omega_{\overline{e}}} \\ \diag(u^{(s(e))}_0, \dots, u^{(s(e))}_0) & 0\end{pmatrix} \]
    in block matrix form. But this matrix is homotopy equivalent to the diagonal matrix $\diag(u^{(s(e))}_{\sgn(\omega_e)\omega_{\overline{e}}},u^{(s(e))}_0,\dots, u^{(s(e))}_0)$.
    
    To summarise, we have
    \[ \theta_e(u_0) = \diag(u_0, \dots, u_0) \qquad \text{and} \qquad \theta_e(u_1) \sim_h \diag(u_{\sgn(\omega_e)\omega_{\overline{e}}},u_0,\dots, u_0), \]
    so the induced maps on $K$-theory are given by
	\begin{align*}
		(\theta_e)_{*,0}\colon \Z[u_0]_0 &\to \Z[u_0]_0 & (\theta_e)_{*,1}\colon \Z[u_1]_1 &\to \Z[u_1]_1 \\
		[u_0]_0 &\mapsto \abs{\omega_e}[u_0]_0, & [u_1]_1 &\mapsto [u_{\sgn(\omega_e)\omega_{\overline{e}}}]_1 = \sgn(\omega_e)\omega_{\overline{e}}[u_1]_1,
	\end{align*}
	and hence $(\theta_e)_{*,0}$ is multiplication by $\abs{\omega_e}$, and $(\theta_e)_{*,1}$ is multiplication by $\sgn(\omega_e)\omega_{\overline{e}}$. The theorem then follows from the commutativity of the diagram
	\[\begin{tikzcd}
		\bigoplus_{[v] \in G\backslash E^0} K_i(C^*(G_v)) \arrow[r,"\alpha_i"] \arrow[d,"\cong"] & \bigoplus_{[v] \in G\backslash E^0} K_i(C^*(G_v)) \arrow[d,"\cong"] \\
		\Z[\Gamma^0] \arrow[r,"A_i"] & \Z[\Gamma^0]
	   \end{tikzcd}\]
	for $i=0,1$.
\end{proof}

\section{Group actions on undirected trees}\label{sec:undirectedtrees}

We can also use Theorem~\ref{thm:multitree sequence} to study crossed products coming from actions on the boundaries of \textit{undirected} trees. Such actions arise naturally for example in the study of graph-of-groups $C^*$-algebras (see \cite{BMPST}, in particular Theorem 4.1).

In Section~\ref{subsec:dual graphs} we discuss the notion of a dual graph, and how it brings group actions on undirected trees into our setting of group actions on multitrees. Then, in Section~\ref{subsec:undirected tree K-theory}, we explain how Theorem~\ref{thm:multitree sequence} can be applied to yield a six-term exact sequence for the $K$-theory of crossed products induced by a group action on an undirected tree.

\subsection{Dual graphs} \label{subsec:dual graphs}

In this subsection, we explain how we can use a dual graph construction to associate to any undirected tree a suitable multitree. We mirror the construction of the dual graph for directed graphs (see, for example, \cite[Page~313]{BPRS}), except that we only consider \textit{reduced} paths.

\begin{defin}
	Let $\Gamma$ be a undirected graph. The \textit{dual graph} of $\Gamma$ is the directed graph $\widehat{\Gamma} = (\widehat{\Gamma}^0, \widehat{\Gamma}^1, \widehat{r}, \widehat{s})$ defined by
	\[\widehat{\Gamma}^0 = \Gamma^1, \quad \widehat{\Gamma}^1 = \Gamma^2_r, \quad \widehat{r}(ef) = e, \quad \widehat{s}(ef) = f, \quad ef \in \Gamma^2_r.\]
\end{defin}

\begin{eg}
    Below is an example of an undirected tree (on the left), along with its dual multitree (on the right).
    
    \[
        \begin{tikzpicture}[baseline=(current bounding box.center)]
            \node[vertex] (a) at (0,0) {};
            \node[vertex,label={[label distance=0.5em,rotate=130]right:{$\cdots$}}] (a1) at (130:1) {};
            \node[vertex,label={[label distance=0.5em,rotate=-130]right:{$\cdots$}}] (a2) at (-130:1) {};
            \node[vertex] (b) at (1,0) {};
            \node[vertex,label={[label distance=0.5em,rotate=50]right:{$\cdots$}}] (b1) at ($(b) + (50:1)$) {};
            \node[vertex,label={[label distance=0.5em,rotate=-50]right:{$\cdots$}}] (b2) at ($(b) + (-50:1)$) {};
            
            \draw[forwards]
            (a) edge (a1)
            (a) edge (a2)
            (a) edge node[midway,above]{$e$} (b)
            (b1) edge (b)
            (b2) edge (b);
            
            \draw[backwards]
            (a) edge (a1)
            (a) edge (a2)
            (a) edge node[midway,below]{$\overline{e}$} (b)
            (b1) edge (b)
            (b2) edge (b);
        \end{tikzpicture}
        \hspace{3em}
        \begin{tikzpicture}[baseline=(current bounding box.center)]
            \node[vertex,label=above:{$e$}] (e) at (0,0.1) {};
            \node[open-vertex,label=below:{$\overline{e}$}] (e-) at (0,-0.1) {};
            
            \node[vertex] (f1) at ($(0.5,0) + (50:0.5) + (-40:0.1)$) {}; 
            \node[open-vertex] (f1-) at ($(0.5,0) + (50:0.5) - (-40:0.1)$) {};
            \node at ($(0.5,0) + (50:1)$) {\rotatebox{50}{$\cdots$}};
            
            \node[vertex] (f2) at ($(-0.5,0) + (130:0.5) + (220:0.1)$) {}; 
            \node[open-vertex] (f2-) at ($(-0.5,0) + (130:0.5) - (220:0.1)$) {};
            \node at ($(-0.5,0) + (130:1)$) {\rotatebox{130}{$\cdots$}};
            
            \node[vertex] (f3) at ($(-0.5,0) + (-130:0.5) - (-220:0.1)$) {}; 
            \node[open-vertex] (f3-) at ($(-0.5,0) + (-130:0.5) + (-220:0.1)$) {};
            \node at ($(-0.5,0) + (-130:1)$) {\rotatebox{-130}{$\cdots$}};
            
            \node[vertex] (f4) at ($(0.5,0) + (-50:0.5) - (40:0.1)$) {}; 
            \node[open-vertex] (f4-) at ($(0.5,0) + (-50:0.5) + (40:0.1)$) {};
            \node at ($(0.5,0) + (-50:1)$) {\rotatebox{-50}{$\cdots$}};
            
            \draw[-stealth, bend left=20]
            (e) edge (f4-)
            (f1) edge (e-)
            (f4) edge (f1-)
            
            (e-) edge (f2)
            (f2-) edge (f3)
            (f3-) edge (e);
            
            \draw[-stealth, bend right=20]
            (e) edge (f1-)
            (f1) edge (f4-)
            (f4) edge (e-)
            
            (e-) edge (f3)
            (f2-) edge (e)
            (f3-) edge (f2);
        \end{tikzpicture}
    \]
\end{eg}

Note that there is a natural identification of paths (finite or infinite) in $\widehat{\Gamma}$ with reduced paths (finite or infinite) of length at least one in $\Gamma$, given by viewing vertices in $\widehat{\Gamma}$ as edges in $\Gamma$. We will use this identification in the proof of the following proposition, which explains why this dual graph construction is useful.

\begin{prop} \label{prop:tree dual multitree}
	Let $T$ be a locally finite nonsingular undirected tree. Its dual graph $\widehat{T}$ is a row-finite, finitely aligned multitree with no sources, and $\partial T \cong \partial \widehat{T}$.
	
	Suppose further that a discrete group $G$ acts on $T$. Then $G$ acts naturally on $\partial \widehat{T}$ and $G \curvearrowright \partial T \cong G \curvearrowright \partial \widehat{T}$.
\end{prop}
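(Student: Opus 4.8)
The plan is to verify the four separate assertions in the proposition in turn: that $\widehat{T}$ is a multitree, that it is row-finite with no sources, that it is finitely aligned, and finally that the boundaries (and the induced $G$-actions on them) are homeomorphic/equivariantly isomorphic. Throughout I would lean heavily on the identification, noted just before the statement, between paths in $\widehat{T}$ and reduced paths of length at least one in $T$: a finite path $e_1 e_2 \cdots e_n$ in $\widehat{T}$ (a sequence of composable edges of $\widehat T$, i.e. of reduced length-$2$ paths in $T$) corresponds to the reduced path $e_1 e_2 \cdots e_n e_{n+1}$ in $T$, and similarly for infinite paths. This dictionary turns every statement about $\widehat T$ into a statement about reduced paths in the tree $T$, where I can use the defining property that between any two vertices of $T$ there is a \emph{unique} reduced path.

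\textbf{Multitree and structural properties.} To see $\widehat T$ is a multitree I would show there is at most one directed path in $\widehat{T}$ between two vertices $f, e \in \widehat{T}^0 = T^1$. A directed path from $f$ to $e$ corresponds, under the dictionary, to a reduced path in $T$ beginning with the edge $e$ and ending with the edge $f$; since $T$ is a tree, the reduced path between the relevant vertices of $T$ is unique, and I would check this forces uniqueness of the directed path in $\widehat T$. For row-finiteness: $\widehat{r}^{-1}(e) = \{ ef : ef \in T^2_r\}$ is in bijection with $\{f \in T^1 : r(f) = s(e),\ f \neq \overline{e}\}$, which is finite because $T$ is locally finite. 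For \emph{no sources}: this set is moreover nonempty because $T$ is nonsingular (so $|r^{-1}(s(e))| > 1$ gives an edge $f \neq \overline e$ with $r(f)=s(e)$), hence every vertex of $\widehat T$ receives an edge. These are short verifications once the dictionary is set up.

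\textbf{Finite alignment.} Here I must show that for $e, f \in \widehat T^0 = T^1$ the set of minimal common upper bounds $e \vee f$ (in the order $v \le w$ iff there is a directed path $w \to v$ in $\widehat{T}$) is finite; in fact I expect it to have at most one element. Translating the order back to $T$: $e \le g$ in $\widehat T$ means there is a reduced path in $T$ from (the range vertex of) $g$ down to $e$, and a common upper bound $g$ of $e$ and $f$ corresponds to a reduced path passing through both edges $e$ and $f$. Uniqueness of reduced paths in the tree should pin down at most one minimal such $g$, giving finite (indeed singleton or empty) alignment. This is the step I expect to require the most care, since I need to track the orientation/direction of edges and argue that the reduced paths realising $e \le g$ and $f \le g$ are compatible.

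\textbf{Boundary homeomorphism and equivariance.} Finally, the dictionary extends to infinite paths: a reduced infinite path in $\widehat{T}$ corresponds to a reduced infinite path of $T$, and I would check this descends to a well-defined map on shift-tail equivalence classes, since applying $\sigma$ once in $\widehat T$ corresponds to applying $\sigma$ once in $T$. I would exhibit the inverse map explicitly and verify both are continuous by checking that cylinder sets $Z_\partial$ correspond to cylinder sets, yielding the homeomorphism $\partial T \cong \partial \widehat{T}$. For the equivariance, the $G$-action on $T^1$ and on reduced paths is given by $g\cdot(e_1 e_2\cdots)=(g\cdot e_1)(g\cdot e_2)\cdots$, which commutes with the dictionary by construction (the action preserves edge reversal and composability), so the homeomorphism intertwines the two boundary actions and $G \curvearrowright \partial T$ is conjugate to $G \curvearrowright \partial \widehat T$. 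The only subtlety worth flagging is confirming that reducedness and shift-tail equivalence are genuinely preserved in \emph{both} directions of the dictionary.
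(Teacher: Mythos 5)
Most of your plan coincides with the paper's proof: the dictionary between paths in $\widehat{T}$ and reduced paths of length at least one in $T$, the uniqueness-of-reduced-paths argument for the multitree property, the bijection $\widehat{r}^{-1}(e) \leftrightarrow s(e)T^1\setminus\{\overline{e}\}$ for row-finiteness and no sources, and the identification $T^\infty_r \cong \widehat{T}^\infty$ respecting shift-tail equivalence, cylinder sets and the $G$-action for the boundary statement. All of that is fine and is essentially what the paper does.

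The genuine gap is in the finite-alignment step, which you yourself flag as the delicate one. Your guiding expectation that $e \vee f$ has at most one element is \emph{false}, so the argument you sketch (``uniqueness of reduced paths in the tree should pin down at most one minimal such $g$'') cannot be completed. Concretely, let $T$ be the $4$-regular tree, fix a vertex $v$, and let $e,f,g_1,g_2$ be the four edges with source $v$. Then $e\overline{g_1},\,f\overline{g_1},\,e\overline{g_2},\,f\overline{g_2} \in T^2_r$, so $\overline{g_1}$ and $\overline{g_2}$ are both common upper bounds of $e$ and $f$ in $\widehat{T}$; they are incomparable, since a reduced path in $T$ with range-most edge $\overline{g_1}$ and source-most edge $\overline{g_2}$ would have to visit $v$ twice, which is impossible in a tree. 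Hence both are minimal and $\abs{e \vee f} = 2$ (with higher-degree vertices one gets arbitrarily large, but finite, sets of minimal common upper bounds). Relatedly, your assertion that a common upper bound $g$ of $e$ and $f$ ``corresponds to a reduced path passing through both edges $e$ and $f$'' is also incorrect: the two reduced paths witnessing $e \leq g$ and $f \leq g$ share a common tail but in general do not concatenate into a single reduced path traversing both $e$ and $f$ (in the example above no reduced path traverses both). What is needed instead, and what the paper does, is the following: for incomparable $e,f$ with a common upper bound $x$, write the witnessing paths as $\lambda = \alpha\nu$ and $\mu = \beta\nu$ with $\nu$ their longest common tail; then $\alpha\overline{\beta}$ is the unique reduced path in $T$ from $r(f)$ to $r(e)$, and the range-most edge $y$ of $\nu$ is a common upper bound of $e$ and $f$ with $y \leq x$ and with $r(y)$ lying on that path. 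Since that path is finite and $T$ is locally finite, only finitely many edges $y$ can occur, so $e$ and $f$ have only finitely many minimal common upper bounds. In short: $\widehat{T}$ is finitely aligned but not ``singly'' aligned, and your proof of this step needs to be replaced by an argument of this kind.
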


\begin{proof}
	First we show that $\widehat{T}$ is a multitree. Take two vertices $e, f \in \widehat{T}^0$, and suppose for a contradiction that there are two distinct paths $\lambda, \mu \in e \widehat{T}^* f$. This implies that there are two distinct reduced paths in $T$ from $r(f)$ to $s(e)$, contradicting that $T$ is a tree. Hence there is at most one directed path between any two vertices in $\widehat{T}^0$, so $\widehat{T}$ is a multitree.
	
	To see that $\widehat{T}$ is row-finite and has no sources, note that for any vertex $e \in \widehat{T}^0$, the set $\widehat{r}^{-1}(e) \subseteq \widehat{T}^1$ is in a one-to-one correspondence with the set $s(e)T^1\setminus\{\overline{e}\}$. In particular, we have that $\abs{\widehat{r}^{-1}(e)} = \abs{s(e)T^1} - 1$. So the assumption that $T$ is locally finite and nonsingular implies that $\widehat{T}$ is row-finite and has no sources.
	
	To see that $\widehat{T}$ is finitely aligned, fix $e, f \in \widehat{T}^0$. We need to show that the set of minimal common upper bounds for $e$ and $f$ is finite. If $e \leq f$ (respectively $f \leq e$) then the only minimal common upper bound for $e$ and $f$ is $f$ (respectively $e$), so we are done. So suppose that $e$ and $f$ are not comparable but have common upper bounds. Take some common upper bound $x$ of $e$ and $f$, so there are paths $\lambda \in e\widehat{T}^* x$ and $\mu \in f\widehat{T}^* x$, which we will view as reduced paths in $T$. Write $\nu$ for the longest path such that $\lambda = \alpha\nu$ and $\mu = \beta\nu$ for some paths $\alpha$ and $\beta$. Then, writing $\overline{\beta}$ for the reversal of the path $\beta$, we have that $\alpha\overline{\beta}$ is the (unique) reduced path in $T$ from $r(f)$ to $r(e)$, and note that $r(\nu) = s(\alpha)$ lies on this path. Write $y$ for the range-most edge in $\nu$. Then $y$ is a common upper bound of $e$ and $f$, and we have $y \leq x$. This shows that any common upper bound of $e$ and $f$ lies above some common upper bound $y$ with $r(y) \in T^0$ lying on the reduced path in $T$ from $r(f)$ to $r(e)$. But this path is finite, and since $T$ is locally finite, there are only finitely many such $y$'s. Hence $e$ and $f$ have only finitely many minimal common upper bounds.
	
	We now show that $\partial T \cong \partial \widehat{T}$. First notice that the sets $T^\infty_r$ and $\widehat{T}^\infty$ are isomorphic under the natural identification. This isomorphism respects shift-tail equivalence, and so the boundaries $\partial T$ and $\partial \widehat{T}$ are isomorphic as sets. Moreover, for any edge $e \in T^1$, the cylinder set $Z_\partial(e) \subseteq \partial T$ is sent by the isomorphism to the cylinder set $Z_\partial(e) \subseteq \partial \widehat{T}$, where $e$ is considered as a vertex of $\widehat{T}$. Thus $\partial T \cong \partial \widehat{T}$ as topological spaces.
	
	Finally, suppose that a discrete group $G$ acts on $T$. This induces an action of $G$ on $\widehat{T}$ in the natural way, which in turn induces an action of $G$ on $\partial \widehat{T}$, and the isomorphism $\partial T \cong \partial \widehat{T}$ described above intertwines the actions of $G$ on $\partial T$ and $\partial \widehat{T}$, which implies that $G \curvearrowright \partial T \cong G \curvearrowright \partial \widehat{T}$, as required. 
\end{proof}

\subsection{Group actions on boundaries of undirected trees} \label{subsec:undirected tree K-theory}

In this subsection we apply Theorem~\ref{thm:multitree sequence} to arrive at the following result for groups acting on the boundaries of \textit{undirected} trees. Note that we write $G_{ef}$ for the stabiliser subgroup of $G$ at $ef \in T^2_r = \widehat{T}^1$, and the map $\theta_{ef}$ is defined as in Notation~\ref{not:thetaetc}, where $ef$ is considered as an edge in the dual multitree $\widehat{T}$.

\begin{thm} \label{thm:bidirected tree sequence}
	Let $G$ be a countable discrete group acting on a locally finite nonsingular undirected tree $T$. Suppose that $G_v$ is amenable for all $v \in T^0$. Then for $\widehat{\alpha}_i = \sum_{[ef] \in G\backslash T^2_r} (\theta_{ef})_{*,i}$ we have the following six-term exact sequence
	\[\begin{tikzcd}
		\bigoplus_{[e]\in G\backslash T^1} K_0(C^*_r(G_e))
		\arrow[r,"\id-\widehat{\alpha}_0"] &
		\bigoplus_{[ef]\in G\backslash T^2_r} K_0(C^*_r(G_{ef})) \arrow[r] &
		K_0(C_0(\partial T) \rtimes_{\tau_\partial,r} G) \arrow[d] \\
		K_1(C_0(\partial T) \rtimes_{\tau_\partial,r} G) \arrow[u] &
		\bigoplus_{[ef]\in G\backslash T^2_r} K_1(C^*_r(G_{ef})) \arrow[l] &
		\bigoplus_{[e]\in G\backslash T^1} K_1(C^*_r(G_e)).
		\arrow[l,"\id-\widehat{\alpha}_1"]
	\end{tikzcd}\]
\end{thm}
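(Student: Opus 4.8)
The plan is to obtain the stated sequence as a special case of Theorem~\ref{thm:multitree sequence} applied to the dual multitree $E := \widehat T$. Proposition~\ref{prop:tree dual multitree} supplies exactly what is needed for this: $\widehat T$ is a row-finite, finitely aligned multitree with no sources, and the $G$-equivariant homeomorphism $\partial T \cong \partial \widehat T$ identifies $C_0(\partial T)\rtimes_{\tau_\partial,r}G$ with $C_0(\partial\widehat T)\rtimes_{\tau_\partial,r}G$, so the right-hand $K$-group of the sequence is precisely the one produced by Theorem~\ref{thm:multitree sequence} for $\widehat T$. It then remains to read the vertex and edge data of $\widehat T$ back on $T$ through the dual-graph dictionary $\widehat T^0 = T^1$ and $\widehat T^1 = T^2_r$: a vertex of $\widehat T$ is an edge $e$ of $T$ with stabiliser $G_e$, while an edge of $\widehat T$ is a reduced path $ef \in T^2_r$ whose associated map is exactly the $\theta_{ef}$ of Notation~\ref{not:thetaetc}. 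Under this dictionary the map $\alpha_i = \sum_{[\varepsilon]\in G\backslash\widehat T^1}(\theta_\varepsilon)_{*,i}$ of Theorem~\ref{thm:multitree sequence} becomes $\widehat\alpha_i = \sum_{[ef]\in G\backslash T^2_r}(\theta_{ef})_{*,i}$, and $\id-\widehat\alpha_i$ is the resulting endomorphism of $\bigoplus_{[e]\in G\backslash T^1} K_i(C^*_r(G_e))$.

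The only hypothesis of Theorem~\ref{thm:multitree sequence} that is not immediate is that $G\curvearrowright\Omega$ is amenable, where here $\Omega = \widehat T^0 \cup \partial\widehat T$. By Remark~\ref{rem:amenabilityassumptions} this is equivalent to amenability of every vertex stabiliser of $\widehat T$ together with amenability of the boundary action $G\curvearrowright\partial\widehat T$. The first is formal: a vertex stabiliser of $\widehat T$ is an edge stabiliser $G_e$ of $T$, and by property~(2) of Section~\ref{subsec:groupactionsongraphs} we have $G_e \leq G_{r(e)}$, so $G_e$ is a subgroup of an amenable group and hence amenable. For the second, I would transport the action along the $G$-equivariant homeomorphism $\partial\widehat T \cong \partial T$ of Proposition~\ref{prop:tree dual multitree} and use that $G\curvearrowright\partial T$ is amenable whenever all vertex stabilisers $G_v$ are amenable, which is the undirected-tree counterpart of the amenability analysis carried out in Section~\ref{sec:propsofaction}.

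I expect this amenability verification for $G\curvearrowright\partial T$ to be the only substantive point; everything else is the formal transfer of Theorem~\ref{thm:multitree sequence} through Proposition~\ref{prop:tree dual multitree}, together with the combinatorial bookkeeping of the dual-graph dictionary. Once $G\curvearrowright\Omega$ is known to be amenable, Theorem~\ref{thm:multitree sequence} applies to $\widehat T$ verbatim and yields the displayed six-term exact sequence, with connecting maps $\id-\widehat\alpha_0$ and $\id-\widehat\alpha_1$ as described above.
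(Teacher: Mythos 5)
Your proposal is correct and follows essentially the same route as the paper: apply Theorem~\ref{thm:multitree sequence} to the dual multitree $\widehat T$ via Proposition~\ref{prop:tree dual multitree}, verify the hypothesis of Remark~\ref{rem:amenabilityassumptions} by observing that vertex stabilisers of $\widehat T$ are edge stabilisers of $T$ (hence subgroups of amenable groups) and that $G \curvearrowright \partial T \cong \partial \widehat T$ is amenable, and then translate the sequence through the dictionary $\widehat T^0 = T^1$, $\widehat T^1 = T^2_r$. The only (cosmetic) difference is the source for amenability of the boundary action: the paper cites Brown--Ozawa directly, whereas you gesture at the undirected-tree counterpart of Section~\ref{sec:propsofaction}, which in the paper is an external citation rather than an internal argument --- but the fact you invoke is exactly the one the paper uses.
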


\begin{proof}
	By Proposition~\ref{prop:tree dual multitree}, we have that the dual graph of $T$ is a row-finite, finitely aligned multitree with no sources. The vertex stabilisers of $\widehat{T}$ are the edge stabilisers of $T$, which are amenable because they are subgroups of the vertex stabilisers of $T$, which are assumed to be amenable. Moreover, the action of $G$ on $\partial \widehat{T} \cong \partial T$ is amenable by \cite[Proposition~5.2.1 and Lemma~5.2.6]{Brown-Ozawa}. So Remark~\ref{rem:amenabilityassumptions} allows us to apply Theorem~\ref{thm:multitree sequence} for the dual graph to get a six-term exact sequence
	\[\begin{tikzcd}
		\bigoplus_{[e]\in G\backslash \widehat{T}^0} K_0(C^*_r(G_e))
		\arrow[r,"\id-\widehat{\alpha}_0"] &
		\bigoplus_{[ef]\in G\backslash \widehat{T}^1} K_0(C^*_r(G_{ef})) \arrow[r] &
		K_0(C_0(\partial \widehat{T}) \rtimes_{\tau_\partial,r} G) \arrow[d] \\
		K_1(C_0(\partial \widehat{T}) \rtimes_{\tau_\partial,r} G) \arrow[u] &
		\bigoplus_{[ef]\in G\backslash \widehat{T}^1} K_1(C^*_r(G_{ef})) \arrow[l] &
		\bigoplus_{[e]\in G\backslash \widehat{T}^0} K_1(C^*_r(G_e)),
		\arrow[l,"\id-\widehat{\alpha}_1"]
	\end{tikzcd}\]
	where $\widehat{\alpha}_i = \sum_{[ef] \in G\backslash\widehat{T}^1}(\theta_{ef})_{*,i}$. But $\widehat{T}^0 = T^1$ and $\widehat{T}^1 = T^2_r$ by definition, and Proposition~\ref{prop:tree dual multitree} implies that $C_0(\partial \widehat{T}) \rtimes_{\tau_\partial,r} G \cong C_0(\partial T) \rtimes_{\tau_\partial,r} G$, which together imply the theorem.
\end{proof}

\begin{remark}
	The assumption in Theorem~\ref{thm:bidirected tree sequence} that all vertex stabilisers are amenable means that the reduced group $C^*$-algebras and crossed products are naturally isomorphic to their respective \textit{full} versions. Thus, Theorem~\ref{thm:bidirected tree sequence} can be viewed as a special case of \cite[Corollary~6.4]{Mundey-Rennie}, which provides the analogous six-term exact sequence for the full group $C^*$-algebras and crossed products, without needing any amenability assumptions. However, the maps in the sequence in \cite{Mundey-Rennie} are described in terms of the $KK$-classes of $C^*$-correspondences rather than in terms of induced homomorphisms. It is not immediately clear to the authors how these pictures correspond to each other. However, we note that in the special cases where all vertex stabilisers are trivial or where all vertex stabilisers are infinite cyclic, the formulas for $\id-\widehat{\alpha}_i $ in Theorem~\ref{thm:bidirected tree sequence} reduce to the formulas for $\Lambda_i$ in \cite[Section~6]{Mundey-Rennie}.
\end{remark}

\begin{remark}
In \cite{BMPST}, the authors associate to each locally finite nonsingular graph of discrete groups $\GG$ a universal $C^*$-algebra $C^*(\GG)$, and show that $C^*(\GG)$ is stably isomorphic to the full crossed product of the fundamental group of $\GG$ acting on the Bass--Serre tree of $\GG$ \cite[Theorem~4.1]{BMPST}. This means that in the setting where all vertex stabilisers are amenable, Theorem~\ref{thm:bidirected tree sequence} can be rephrased in terms of the $K$-theory of the graph-of-groups $C^*$-algebra $C^*(\GG)$, just as is done in \cite[Theorem~6.2]{Mundey-Rennie}.
\end{remark}

\section{Properties of the action $G \curvearrowright \partial E$}\label{sec:propsofaction}

In this section we examine dynamical properties for the action of a discrete group $G$ on the boundary $\partial E$ of a row-finite, finitely aligned multitree $E$ with no sources, induced from an action of $G$ on $E$. We investigate minimality in Section~\ref{subsec:minimality}, local contractivity in Section~\ref{subsec:local contractivity}, topological freeness in Section~\ref{subsec:topological freeness}, and finally amenability in Section~\ref{subsec:amenability}. Note that it follows from Proposition~\ref{prop:tree dual multitree} that the results in this section generalise the results in \cite[Section~5]{BMPST}, which deals only with group actions on undirected trees. We will use the same notation as in Notation~\ref{not:omegas}.

\subsection{Minimality} \label{subsec:minimality}

Recall that an action of a discrete group $G$ on a locally compact Hausdorff space $X$ is called \textit{minimal} if every orbit of points of $X$ is dense in $X$. We have the following characterisation in our context.

\begin{prop}
	Suppose that a discrete group $G$ acts on a row-finite, finitely aligned multitree $E$ with no sources. The induced action $G \curvearrowright \partial E$ is minimal if and only if the graph $\Gamma$ is cofinal, that is, for every infinite path $\lambda \in \Gamma^\infty$ and every vertex $v \in \Gamma^0$, there is some vertex $w$ on $\lambda$ so that there is a path in $\Gamma$ from $w$ to $v$.
\end{prop}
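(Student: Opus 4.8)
The plan is to prove both implications by translating the topological statement of minimality into a combinatorial statement about finite and infinite paths, and then transporting that statement back and forth through the quotient map $\pi \colon E \to \Gamma$. The key preliminary observation is a description of membership in a boundary cylinder: for an infinite path $\eta \in E^\infty$ and a vertex $u \in E^0$, one has $[\eta] \in Z_\partial(u)$ if and only if there is a vertex $x$ lying on $\eta$ together with a finite path in $E$ from $x$ to $u$ (that is, with source $x$ and range $u$). This follows by unwinding shift-tail equivalence: a representative of $[\eta]$ with range $u$ is precisely a finite path from some vertex $r(\sigma^k(\eta))$ on $\eta$ to $u$, concatenated with the tail $\sigma^k(\eta)$. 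Since $\{Z_\partial(u) : u \in E^0\}$ is a base for $\partial E$ (this is Definition~\ref{def:Omega} restricted to the closed subspace $\partial E$) and each $Z_\partial(u)$ is nonempty (as $E$ has no sources), minimality is equivalent to the assertion that for every $\mu \in E^\infty$ and every $\tilde v \in E^0$ there is $g \in G$ with $g[\mu] \in Z_\partial(\tilde v)$.

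I will also need two routine lifting facts for $\pi$, both proved by translating one edge at a time by a group element: every infinite path in $\Gamma$ is the image $\pi(\mu)$ of some $\mu \in E^\infty$; and any finite path in $\Gamma$ whose source is a vertex $w$ lifts to a finite path in $E$ with any prescribed source vertex $y \in \pi^{-1}(w)$. Throughout I use that $G$ acts by graph automorphisms, so $\pi$ sends paths to paths, and $g$ sends a path from $a$ to $b$ to a path from $g \cdot a$ to $g \cdot b$; in particular the vertices on $g \cdot \mu$ are the $G$-translates of the vertices on $\mu$ and project to the same vertices of $\pi(\mu)$.

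For the forward implication I assume minimality, fix $\lambda \in \Gamma^\infty$ and $v \in \Gamma^0$, choose a lift $\mu \in E^\infty$ of $\lambda$ and any lift $\tilde v \in \pi^{-1}(v)$. Density of $G \cdot [\mu]$ gives $g$ with $g[\mu] \in Z_\partial(\tilde v)$, hence by the cylinder description a vertex $x$ on $g\mu$ and a finite path from $x$ to $\tilde v$ in $E$. Applying $\pi$, the vertex $\pi(x)$ lies on $\pi(g\mu) = \lambda$ and there is a path from $\pi(x)$ to $v$ in $\Gamma$, so $w := \pi(x)$ witnesses cofinality. For the converse I assume $\Gamma$ cofinal, fix any $[\mu] \in \partial E$ and $\tilde v \in E^0$, and set $\lambda := \pi(\mu)$ and $v := \pi(\tilde v)$. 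Cofinality supplies a vertex $w$ on $\lambda$ and a path from $w$ to $v$ in $\Gamma$; writing $w = \pi(y)$ for a vertex $y$ on $\mu$, I lift this path to a finite path $\beta$ in $E$ with source $y$ and range some $\tilde v' \in \pi^{-1}(v)$. Choosing $g_0$ with $g_0 \cdot \tilde v = \tilde v'$ and translating by $g_0^{-1}$, the path $g_0^{-1} \cdot \beta$ runs from the vertex $g_0^{-1} \cdot y$ (which lies on $g_0^{-1} \cdot \mu$) to $\tilde v$; by the cylinder description this gives $g_0^{-1}[\mu] \in Z_\partial(\tilde v)$, and hence density of every orbit.

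The main obstacle is bookkeeping rather than any deep difficulty: one must keep the range/source conventions straight so that the direction of the path produced by cofinality (from a vertex on $\lambda$ to the target $v$) matches exactly the direction forced by the cylinder-set description of $Z_\partial(\tilde v)$, and one must correctly pin down the single group element that simultaneously moves a chosen vertex of the lifted path onto $\tilde v$. The two lifting statements are elementary but should be stated precisely, since the converse direction relies on being able to \emph{prescribe} the source vertex of the lift.
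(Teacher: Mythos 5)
Your proof is correct and takes essentially the same route as the paper's: both directions lift paths between $E$ and $\Gamma$, use a group element to align the lifted finite path with the chosen lift of $v$, and unwind shift-tail equivalence to decide membership in $Z_\partial(\tilde v)$. The differences are only cosmetic --- you isolate the cylinder-membership description and the lifting facts as explicit preliminary statements, and in the converse direction you lift with prescribed source and translate by $g_0^{-1}$ where the paper lifts with prescribed range and translates the infinite path by $g$.
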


\begin{proof}
	First suppose that the action $G \curvearrowright \partial E$ is minimal. Fix an infinite path $\lambda$ in $\Gamma$ and a vertex $v \in \Gamma^0$. Then $\lambda$ and $v$ lift to some infinite path $\lambda' \in E^\infty$ and some vertex $v' \in E^0$ respectively. By minimality, there is $g \in G$ so that $g \cdot [\lambda'] = [g \cdot \lambda'] \in Z_\partial(v')$. This means there is an infinite path $\mu' \in v'E^\infty$ so that $\mu'$ and $g \cdot \lambda'$ agree after some vertex, say $w' \in E^0$. So there is a path from $w'$ (which is on $g \cdot \lambda'$) to $v'$, which descends to a path from $w \in \Gamma^0$ to $v$. But since $w'$ is in the same orbit as $g^{-1} \cdot w'$, which is on $\lambda'$, we get that $w$ is on $\lambda$, so there is a path from some $w$ on $\lambda$ to $v$.
	
	Now suppose that $\Gamma$ is cofinal. Fix $\lambda' \in E^\infty$ and $v' \in E^0$ arbitrary. It is enough to prove that there is $g \in G$ such that $g \cdot [\lambda'] = [g \cdot \lambda'] \in Z_\partial(v')$. Write $\lambda$ and $v$ for the images of $\lambda'$ and $v'$ respectively under the quotient map. Then $\lambda$ is an infinite path in $\Gamma$ and $v \in \Gamma^0$ is a vertex, so there is some vertex $w$ on $\lambda$ such that there is a path in $\Gamma$ from $w$ to $v$. Now, there is a lift $w' \in E^0$ of $w$ such that $w'$ is on $\lambda'$. Also, the path from $w$ to $v$ lifts to a path from some vertex $w'' \in E^0$ to $v'$, and $w''$ is in the same vertex orbit as $w'$. Let $g \in G$ be an element such that $g \cdot w' = w''$. Then $w''$ will be on $g \cdot \lambda'$, so $g \cdot \lambda'$ is shift-tail equivalent to an infinite path with range $v'$. Hence $g \cdot [\lambda'] \in Z_\partial(v')$.
\end{proof}

\subsection{Local contractivity} \label{subsec:local contractivity}

Recall that an action of a discrete group $G$ on a locally compact Hausdorff space $X$ is called \textit{locally contractive} if for every non-empty open set $U \subseteq X$ there is a non-empty open set $V \subseteq U$ and an element $g \in G$ such that $g \overline{V} \subsetneq V$. Like in \cite{BMPST}, we give a sufficient condition for the action $G \curvearrowright \partial E$ to be locally contractive.

First recall that a loop $\eta=\eta_1\cdots\eta_m$ in a directed graph $\Gamma$ has an \textit{entrance at vertex} $x\in\Gamma^0$ if there exists $\eta_i$ such that $r(\eta_i)=x$ and $|r^{-1}(x)|\ge 2$.

\begin{prop}\label{prop:localcon}
	Suppose that a discrete group $G$ acts on a row-finite, finitely aligned multitree $E$ with no sources. Suppose that for every vertex $v \in \Gamma^0$ there is a loop $\eta=\eta_1\dots\eta_m$ in $\Gamma$ such that 
	\begin{itemize}
		\item[(i)] $\eta$ has an entrance, or $[G_{r(\eta_i)}:G_{\eta_i}]\ge 2$ for some $1\le i\le m$; and
		\item[(ii)]  there is a path from $r(\eta)$ to $v$.
	\end{itemize}
Then the induced action  $G \curvearrowright \partial E$ is locally contractive.
\end{prop}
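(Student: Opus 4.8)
The plan is to verify the definition of local contractivity directly: given a non-empty open set $U \subseteq \partial E$, I will produce a non-empty open $V \subseteq U$ and an element $g \in G$ with $g\overline{V} \subsetneq V$. Since the cylinder sets $Z_\partial(u)$, $u \in E^0$, form a basis of compact open (hence clopen) sets for $\partial E$, I may first shrink $U$ to a basic cylinder by choosing $w \in E^0$ with $Z_\partial(w) \subseteq U$. Applying the hypothesis to the vertex $v = G\cdot w \in \Gamma^0$ yields a loop $\eta = \eta_1\cdots\eta_m$ in $\Gamma$ (with $m \geq 1$) satisfying (i) and (ii). Condition (ii) supplies a path in $\Gamma$ from $r(\eta)$ to $v$, which I lift, starting from the vertex $w$, to a path $\rho$ in $E$ with $r(\rho) = w$ and $s(\rho) = x$, where $x \in E^0$ is a lift of $r(\eta)$; prepending $\rho$ then gives $Z_\partial(x) \subseteq Z_\partial(w) \subseteq U$. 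I next lift the loop $\eta$ itself, starting at $x$, to a path $\widetilde\eta = \widetilde\eta_1\cdots\widetilde\eta_m$ in $E$ with $r(\widetilde\eta) = x$ and $s(\widetilde\eta) = y$; note this is a genuine non-closed path, since a multitree has no directed cycles. As $y$ is again a lift of $r(\eta) = s(\eta)$, it lies in the orbit $G\cdot x$, so I may fix $g \in G$ with $g\cdot x = y$.

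With this data I set $V := Z_\partial(x)$, so that $V \subseteq U$ and $gV = Z_\partial(g\cdot x) = Z_\partial(y)$. Prepending $\widetilde\eta$ gives $Z_\partial(y) \subseteq Z_\partial(x) = V$, and the point is that condition (i) makes this inclusion \emph{proper}. Concretely, I will show that some vertex $r(\widetilde\eta_i)$ on $\widetilde\eta$ carries a downward edge $f \neq \widetilde\eta_i$ in $E$. In the entrance case, a second $\Gamma$-edge into $r(\eta_i)$ lifts to such an $f$ at $r(\widetilde\eta_i)$; in the index case, the orbit–stabiliser count of [Section~\ref{subsec:groupactionsongraphs}(3)] gives $|G_{r(\widetilde\eta_i)}\cdot\widetilde\eta_i| = [G_{r(\eta_i)} : G_{\eta_i}] \geq 2$, producing a second $E$-edge over $\eta_i$ with range $r(\widetilde\eta_i)$. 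Since $E$ has no sources, I extend the finite path ``from $x$ along $\widetilde\eta$ to $r(\widetilde\eta_i)$, then along $f$'' to an infinite path $\mu$ with $r(\mu) = x$, so $[\mu] \in Z_\partial(x)$. As $r(\widetilde\eta_i)$ lies strictly above $y$ and $E$ is a multitree, the unique directed path from $r(\widetilde\eta_i)$ to $y$ runs along $\widetilde\eta_i\cdots\widetilde\eta_m$; because $\mu$ departs from $\widetilde\eta$ at $r(\widetilde\eta_i)$ via $f \neq \widetilde\eta_i$, the vertex $y$ does not lie on $\mu$, and hence $[\mu] \in Z_\partial(x) \setminus Z_\partial(y)$. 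Therefore $g\overline{V} = gV = Z_\partial(y) \subsetneq Z_\partial(x) = V$, using that $V$ is clopen so $\overline{V} = V$. This establishes local contractivity.

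The routine ingredients are the reduction to a basic cylinder, the path-liftings (which exist because $E \to \Gamma$ lifts downward edges once a lift of the top vertex is chosen), and the standard identity $g\cdot Z_\partial(u) = Z_\partial(g\cdot u)$. I expect the main obstacle to be the final paragraph: converting the two alternatives in (i) into an honest branch vertex on the lifted loop, and then invoking the uniqueness of directed paths in the multitree to certify that the branched ray $\mu$ really escapes $Z_\partial(y)$. The orbit–stabiliser computation in the index case and this escape argument are the steps I would write out most carefully, while everything else is bookkeeping with the shift-tail description of $\partial E$.
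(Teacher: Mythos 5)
Your construction coincides with the paper's own proof almost step for step (shrink $U$ to a cylinder, lift the connecting path with prescribed range, lift the loop, take $g$ carrying the range of the lifted loop to its source, and use condition (i) together with orbit--stabiliser to produce a branch edge $f \neq \widetilde\eta_i$ with $r(f)=r(\widetilde\eta_i)$), but the final inference --- ``the vertex $y$ does not lie on $\mu$, and hence $[\mu] \in Z_\partial(x)\setminus Z_\partial(y)$'' --- has a genuine gap. Membership in a boundary cylinder is a property of the shift-tail \emph{class}: $[\mu]\in Z_\partial(y)$ if and only if some vertex $z=r(\sigma^a(\mu))$ of $\mu$ admits a directed path $\beta$ from $z$ to $y$ (i.e.\ $\beta\in yE^*z$), in which case $\beta\,\sigma^a(\mu)$ is a representative of $[\mu]$ with range $y$; this can happen even though $y$ is not a vertex of $\mu$. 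For instance, let $E$ be the directed tree with $E^0=\{y,a,z_0,z_1,z_2,\dots\}$ and edges $e\in yE^1z_0$, $e'\in aE^1z_0$, and $c_k\in z_{k-1}E^1z_k$ for $k\ge 1$: the path $\mu=e'c_1c_2\cdots$ never passes through $y$, yet $[\mu]=[e\,c_1c_2\cdots]\in Z_\partial(y)$. So ruling out that $\mu$ passes through $y$ only excludes the representatives of $[\mu]$ that run along $\mu$ itself; it does not exclude representatives obtained by rerouting at some vertex of $\mu$, which is exactly what shift-tail equivalence allows.

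What you actually need is the stronger statement that no vertex of $\mu$ admits a directed path to $y$; equivalently, $Z_\partial(s(f))\cap Z_\partial(y)=\emptyset$, which suffices because $[\mu]\in Z_\partial(s(f))$. Your own tool closes this gap with a little more work: if $z$ is a vertex of $\mu$ lying on the segment $ff_1f_2\cdots$ and $\beta\in yE^*z$, then the segment of $\mu$ from $z$ to $r(\widetilde\eta_i)$ and the concatenation $(\widetilde\eta_i\cdots\widetilde\eta_m)\beta$ are two distinct paths from $z$ to $r(\widetilde\eta_i)$ (their range-most edges are $f$ and $\widetilde\eta_i$ respectively), contradicting the multitree axiom; and a vertex on the stem $\widetilde\eta_1\cdots\widetilde\eta_{i-1}$ with a path to $y$ would force a directed cycle, which a multitree cannot contain. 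This is precisely how the paper argues, packaged via Proposition~\ref{prop:multitree cylinder intersection}: if $Z_\partial(s(f))\cap Z_\partial(y)\neq\emptyset$, then $s(f)$ and $y$ have a common upper bound $u$, and the two resulting paths from $u$ to $r(\widetilde\eta_i)$ violate uniqueness of directed paths. With the final step repaired in this way (or replaced by the cylinder-disjointness argument just described), the rest of your proposal is correct and is the same argument as the paper's.
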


\begin{proof}
	Let $U \subseteq \partial E$ be a non-empty open subset, and let $v' \in E^0$ be such that $Z_\partial(v') \subseteq U$. Let $v \in \Gamma^0$ be the image of $v'$ under the quotient map. Let $\eta$ be as in the assumption. The path from $r(\eta)$ to $v$ lifts to a path from some vertex $w' \in E^0$ to $v'$, and so $Z_\partial(w') \subseteq Z_\partial(v') \subseteq U$ by Lemma~\ref{lemma:multitree cylinder sets}. Now $\eta$ lifts to some path $\eta'$ from some vertex $w'' \in E^0$ to $w'$, and $w''$ is in the vertex orbit of $w'$. So $w'' = g \cdot w'$ for some $g \in G$, and $g \cdot Z_\partial(w') = Z_\partial(g \cdot w') \subseteq Z_\partial(w')$ by Lemma~\ref{lemma:multitree cylinder sets}. Note that $Z_\partial(w')$ is closed, so all that is left to show is that the containment $Z(g \cdot w') \subseteq Z(w')$ is proper.
    
    By assumption, there is $x\in\Gamma^0$ such that $\eta$ either has an entrance at $x$, or that there is $\eta_i$ with $r(\eta_i)=x$ and $[G_x:G_{\eta_i}]\ge 2$. Write $x' \in E^0$ for a lift of $x$ which lies on $\eta'$. Note that by the orbit-stabiliser theorem, $\sum_{e \in x\Gamma^1}[G_x:G_e]$ is equal to the number of edges in $E^1$ with range $x'$; and the assumption ensures that this number is at least $2$. In particular, if $e'$ is the edge on $\eta'$ with range $x'$, then there is a distinct edge $e'' \in x'E^1$. Note that $w' \leq s(e'')$, so by Lemma~\ref{lemma:multitree cylinder sets} we have that $Z_\partial(s(e'')) \subseteq Z_\partial(w')$.
    
    We claim that $Z_\partial(s(e'')) \cap Z_\partial(g \cdot w') = \emptyset$. Indeed, suppose for a contradiction that $Z_\partial(s(e'')) \cap Z_\partial(g \cdot w') \neq \emptyset$. Then Proposition~\ref{prop:multitree cylinder intersection} implies that $s(e'')$ and $w'' = g \cdot w'$ have a common upper bound $u' \in E^0$. Write $\lambda_1, \lambda_2$ for the paths from $u'$ to $s(e'')$ and $w''$ respectively, and write $\mu$ for subpath of $\eta'$ from $w''$ to $x'$. Then $e''\lambda_1$ and $\mu\lambda_2$ are both paths from $u'$ to $x'$, and they are distinct since they have distinct range-most edges by construction. This contradicts that $E$ is a multitree, and so $Z_\partial(s(e'')) \cap Z_\partial(g \cdot w') = \emptyset$. This implies that the containment $Z_\partial(g \cdot w') \subseteq Z_\partial(w')$ is proper, completing the proof.
\end{proof}

\subsection{Topological freeness} \label{subsec:topological freeness}

Recall that an action of a group $G$ on a locally compact Hausdorff space $X$ is called \textit{topologically free} if for all $g \in G\setminus\{1\}$, the set $\{x \in X : g \cdot x \neq x\}$ is dense in $X$. For $G$ discrete, an action $G \curvearrowright X$ is topologically free if and only if the set of points in $X$ with trivial isotropy is dense in $X$.

It seems to be difficult in general to characterise topological freeness of an action $G \curvearrowright \partial E$ in terms of the corresponding quotient directed graph and the stabiliser subgroups. However, we have characterisations in the special cases where $G$ acts freely on $E$ (Proposition~\ref{prop:free action topological freeness}) and where all vertex stabilisers are infinite cyclic (Proposition~\ref{prop:GBS topological freeness}).

We start with a lemma.

\begin{lemma} \label{lemma:path lifts}
	Suppose that a discrete group $G$ acts on a row-finite, finitely aligned multitree $E$ with no sources. For any finite path $\alpha = e_1 e_2 \dots e_n$ in $\Gamma$ and any lift $v' \in E^0$ of $r(\alpha)$, the number of lifts of $\alpha$ with range $v'$ is equal to
	\[[G_{r(e_1)}:G_{e_1}] [G_{r(e_2)}:G_{e_2}]\cdots [G_{r(e_n)}:G_{e_n}].\]
\end{lemma}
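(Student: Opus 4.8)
The plan is to prove the statement by induction on the length $n$ of the path $\alpha$, with the crux being a single-edge count carried out via the orbit--stabiliser theorem. The base case $n=0$ is immediate: a length-zero path is the vertex $r(\alpha)$, whose only lift with range $v'$ is $v'$ itself, matching the empty product $1$.

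For the inductive engine I would first isolate the case $n=1$. Fix a single edge $e \in \Gamma^1$ whose range is the image of $v'$, and let $\hat e \in E^1$ be any lift of $e$ with $r(\hat e) = v'$; such a lift exists because $G$ permutes the lifts of $e$ transitively and commutes with $r$, while $v'$ is a lift of $r(e)$. The lifts of $e$ with range $v'$ are then precisely $\{g \cdot \hat e : g \in G_{v'}\}$, since $r(g\cdot \hat e) = g\cdot v' = v'$ exactly when $g \in G_{v'}$. Property~(2) of Section~\ref{subsec:groupactionsongraphs} gives $G_{\hat e} \le G_{r(\hat e)} = G_{v'}$, so orbit--stabiliser applied inside $G_{v'}$ shows the number of such lifts is $[G_{v'} : G_{\hat e}]$. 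Finally, since $v'$ lifts $r(e)$, the subgroup $G_{v'}$ is conjugate to $G_{r(e)}$ with $G_{\hat e}$ the corresponding conjugate of $G_e$, and as conjugation preserves index this count equals $[G_{r(e)} : G_e]$.

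For the inductive step I would write $\alpha = e_1\beta$ with $\beta = e_2\cdots e_n$ a path of length $n-1$ satisfying $r(\beta) = s(e_1)$. Every lift $\alpha'$ of $\alpha$ with range $v'$ factors uniquely as $\alpha' = e_1'\beta'$, where $e_1'$ is a lift of $e_1$ with $r(e_1') = v'$ and $\beta'$ is a lift of $\beta$ with $r(\beta') = s(e_1')$; conversely any such pair concatenates to a lift of $\alpha$. By the $n=1$ case there are $[G_{r(e_1)}:G_{e_1}]$ choices of $e_1'$, and the essential point is that for each of these the source $s(e_1')$ is again a lift of $r(\beta)$, so the inductive hypothesis counts the lifts $\beta'$ over it as $\prod_{i=2}^n [G_{r(e_i)}:G_{e_i}]$ \emph{independently of which lift $s(e_1')$ happens to be}. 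Summing over the choices of $e_1'$ then yields the product $\prod_{i=1}^n [G_{r(e_i)}:G_{e_i}]$, as required.

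The main obstacle is really the bookkeeping in the single-edge case: making sure the index produced is exactly $[G_{r(e)}:G_e]$ as defined in the paper (rather than something depending on the chosen lifts $\hat e$ and $v'$), and checking that the stabiliser containment $G_{\hat e}\le G_{v'}$ is what lets orbit--stabiliser be applied within $G_{v'}$. I would also note that the multitree and finite-alignment hypotheses play no role here; the argument is purely local, edge by edge, and remains valid for any action on a row-finite directed graph (with row-finiteness only guaranteeing the indices are finite by Property~(3) of Section~\ref{subsec:groupactionsongraphs}).
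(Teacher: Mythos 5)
Your proof is correct and is essentially the paper's own argument written out in full: the paper's proof is a one-line reduction to the single-edge fact that the lifts of $e \in \Gamma^1$ with range $v'$ number $[G_{r(e)}:G_e]$ (via orbit--stabiliser, Property~(3) of Section~\ref{subsec:groupactionsongraphs}), with the concatenation/induction left implicit. Your version simply makes the induction and the conjugation-invariance bookkeeping explicit, which matches the intended proof.
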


\begin{proof}
	The claim follows from the fact that for any edge $e \in \Gamma^1$ and any lift $v' \in E^0$ of $r(e)$, the number of lifts of $e$ with range $v'$ is equal to $[G_{r(e)}:G_e]$.
\end{proof}

We start with the case where $G$ acts freely on $E$. First, recall that a directed graph $\Gamma$ is called \textit{aperiodic} if every loop has an entrance.

\begin{prop} \label{prop:free action topological freeness}
	Suppose that a discrete group $G$ acts \textit{freely} on a row-finite, finitely aligned multitree $E$ with no sources. The induced action $G \curvearrowright \partial E$ is topologically free if and only if $\Gamma$ is aperiodic.
\end{prop}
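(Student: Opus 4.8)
The plan is to work with the reformulation of topological freeness for discrete groups recalled above: the action $G\curvearrowright\partial E$ is topologically free if and only if, for every $g\in G\setminus\{1\}$, the fixed-point set $\{x\in\partial E: g\cdot x=x\}$ has empty interior; since the cylinder sets $Z_\partial(v')$ (for $v'\in E^0$) form a basis, this amounts to requiring that no nontrivial $g$ fix an entire cylinder $Z_\partial(v')$. The technical heart of both directions is the following observation, valid because the action is free: if $g\in G\setminus\{1\}$ fixes $[\mu]\in\partial E$, then the image $\overline\mu$ of $\mu$ in $\Gamma$ is eventually periodic. Indeed, $g\cdot[\mu]=[\mu]$ gives $\sigma^m(g\cdot\mu)=\sigma^n(\mu)$ for some $m,n$, so $g\cdot\mu_{m+k}=\mu_{n+k}$ for all $k\ge1$; the case $m=n$ would make $g$ fix an edge of $E$, forcing $g=1$ by freeness, so $m\ne n$, and applying the quotient map (which kills $g$) yields $\overline\mu_{m+k}=\overline\mu_{n+k}$ for all $k$, i.e.\ $\overline\mu$ is eventually periodic with period $|m-n|$.

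For the direction ``$\Gamma$ not aperiodic $\Rightarrow$ not topologically free'', I would take a loop $\eta$ in $\Gamma$ with no entrance and lift it: choosing a lift $v'\in E^0$ of $r(\eta)$, \lemref{lemma:path lifts} gives a unique lift $\eta'$ with range $v'$ and some source $w'$. Because $E$ is a multitree it has no directed cycles, so $w'\ne v'$, and as $v',w'$ lie in the same orbit there is a unique $g\in G$ with $g\cdot v'=w'$, necessarily $g\ne1$. Setting $\lambda':=\eta'(g\cdot\eta')(g^2\cdot\eta')\cdots$ gives an infinite path with $g\cdot\lambda'=\sigma^{|\eta|}(\lambda')$, so $g$ fixes $[\lambda']$. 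Since $\eta$ has no entrance and the action is free, every vertex occurring on $\lambda'$ has a unique incoming edge in $E$; hence $\lambda'$ is the \emph{only} infinite path with range $v'$, so $Z_\partial(v')=\{[\lambda']\}$ is a singleton, hence an open fixed point of $g$. Thus $\{x:g\cdot x=x\}$ has nonempty interior and the action is not topologically free.

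For the converse I would argue contrapositively. If the action is not topologically free, some $g\ne1$ fixes an entire cylinder $Z_\partial(v')$; by the observation above every infinite path with range $v'$ has eventually periodic image, and by unique path lifting (\lemref{lemma:path lifts}, with trivial stabilisers) this means \emph{every} infinite path in $\Gamma$ with range $v:=\overline{v'}$ is eventually periodic. The whole statement then reduces to the graph-theoretic claim: if, from some vertex of a row-finite directed graph with no sources, every infinite path is eventually periodic, then the graph has a loop without entrance.

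This last claim is the main obstacle, and I would prove it by a branching/cardinality argument. Call $u$ a \emph{branch vertex} if $|r^{-1}(u)|\ge2$. If, from some vertex reachable downstream of $v$, the path is forced forever (no further branch vertex occurs), then that forced infinite path is eventually periodic by hypothesis, and its tail loop lies entirely in the forced region, so every vertex of this loop has a unique incoming edge---a loop without entrance, as required. Otherwise a branch vertex is always reachable, so one can embed $\{0,1\}^{\N}$ into the set of infinite paths with range $v$ by repeatedly following forced edges to the next branch vertex and there choosing between two distinct incoming edges; distinct binary sequences give distinct paths, producing uncountably many infinite paths from $v$. But an eventually periodic path is determined by a finite prefix together with a single repeating block, so there are only countably many of them---contradicting that all of these uncountably many paths are eventually periodic. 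Hence a loop without entrance must exist, which completes the argument.
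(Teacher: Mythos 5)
Your proof is correct. The direction ``$\Gamma$ not aperiodic $\Rightarrow$ not topologically free'' is essentially the paper's own argument: both use \lemref{lemma:path lifts} with trivial stabilisers to get unique lifts of the entranceless loop $\eta$, conclude that $Z_\partial(v')$ is a singleton, and exhibit a nontrivial $g$ fixing it (the paper obtains $g$ from uniqueness of the lift of $\eta^\infty$ rather than from your explicit telescoping $\eta'(g\cdot\eta')(g^2\cdot\eta')\cdots$, but that is cosmetic). The genuine difference is in the other direction. The paper proves ``aperiodic $\Rightarrow$ topologically free'' directly, citing \cite[Lemma~3.2]{BPRS} to produce, from every vertex of an aperiodic graph, an infinite path with aperiodic edge sequence; a lift of such a path then has trivial isotropy by exactly your ``$m\neq n$ by freeness'' observation, used contrapositively. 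You instead argue by contraposition and replace the citation with a self-contained branching/cardinality argument: a nontrivial element fixing a whole cylinder forces every path in $\Gamma$ with range $v$ to be eventually periodic, and then either the graph eventually stops branching (giving a forced tail loop, hence a loop without entrance) or persistent branching embeds $\{0,1\}^\N$ into $v\Gamma^\infty$, contradicting the countability of eventually periodic paths. Your graph-theoretic claim is precisely the contrapositive of the cited BPRS lemma, so in effect you have re-proved it; this makes your argument longer but self-contained, whereas the paper's is shorter at the cost of an external reference. One small point to make explicit in your Case 1: the forced infinite path has range $u$, not $v$, so you should prepend the connecting path from $u$ to $v$ before invoking the hypothesis (eventual periodicity survives deleting the finite prefix). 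Both proofs ultimately rest on the same two facts: freeness rules out $\sigma^m(g\cdot\mu)=\sigma^m(\mu)$ for $g\neq 1$, and unique path lifting transfers the problem faithfully between $E$ and $\Gamma$.
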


\begin{proof}
	First suppose that $\Gamma$ is aperiodic. This means that for every vertex $v \in \Gamma^0$, there is an infinite path $\lambda \in v\Gamma^\infty$ with aperiodic edge sequence \cite[Lemma~3.2]{BPRS} (note the opposite convention for path direction used in that paper). To show that $G \curvearrowright \partial E$ is topologically free, we prove that every cylinder set in $\partial E$ contains a point with trivial isotropy.
	
	Fix $v' \in E^0$. Write $v \in \Gamma^0$ for its image under the quotient map, and let $\lambda \in v\Gamma^\infty$ be an infinite path with aperiodic edge sequence. Let $\lambda' \in v'E^\infty$ be a lift of $\lambda$. We claim that $[\lambda']\in Z_\partial(v')$ has trivial isotropy. Indeed, suppose that $g \in G$ fixes $[\lambda']$, so there exist $m,n \in \N$ such that $\sigma^m(g\cdot\lambda') = \sigma^n(\lambda')$. This implies that $\sigma^m(\lambda)$ and $\sigma^n(\lambda)$ are the same infinite path in $\Gamma^\infty$, and so by aperiodicity of $\lambda$ we must have that $m=n$. This means that $g$ fixes the infinite path $\sigma^n(\lambda')$ and hence the vertex $r(\sigma^n(\lambda'))$. But since $G$ is assumed to act freely on $E$, this implies that $g$ is the identity element. Hence $[\lambda']$ has trivial isotropy as claimed.
	
	Now suppose that $\Gamma$ is not aperiodic. Then $G$ is not trivial (since if it were, then $\Gamma = G \backslash E$ would simply be the multitree $E$, which is aperiodic since it cannot contain any loops), and there is a loop $\eta$ in $\Gamma$ with no entrance. Let $v = r(\eta)$, and let $v' \in E^0$ be a lift of $v$. First we claim that there is a unique lift $\lambda' \in v'E^\infty$ of $\eta^\infty:=\eta\eta\cdots$ with range $v'$. Indeed, since all vertex stabilisers are trivial, we have $[G_{r(e)}:G_e] = 1$ for all $e \in \Gamma^1$, and so Lemma~\ref{lemma:path lifts} implies that for every lift $v'' \in E^0$ of $v$, there is a unique lift $\eta'' \in v''E^*$ of $\eta$. This in turn implies that $\lambda'$ is the only lift of $\eta^\infty$ with range $v'$, as claimed.
	
	Now since $\eta$ has no entrance, the only infinite path in $\Gamma$ with range $v$ is the infinite path $\eta^\infty$. This implies that $\lambda'$ is the only infinite path in $E$ with range $v'$, and so $Z_\partial(v')$ is the singleton set $\{[\lambda']\}$. We claim that $[\lambda']$ has non-trivial isotropy. Write $m$ for the length of $\eta$. Then $\sigma^m(\lambda')$ is the unique lift of $\eta^\infty$ with range $r(\sigma^m(\lambda'))$. Since $r(\sigma^m(\lambda'))$ is also a lift of $v \in \Gamma^0$, it is in the same orbit as $v'$, and so there is some (necessarily non-identity) $g \in G$ such that $g \cdot v' = r(\sigma^m(\lambda'))$. But $g \cdot \lambda'$ must be a lift of $\eta^\infty$ with range $r(\sigma^m(\lambda'))$, and so uniqueness gives that $g \cdot \lambda' = \sigma^m(\lambda')$. Hence $g \cdot [\lambda'] = [\lambda']$, showing that $[\lambda']$ has non-trivial isotropy.
	
	This implies that $Z_\partial(v')$ is an open set in $\partial E$ which does not contain a point with trivial isotropy. Hence the set of points in $\partial E$ with trivial isotropy is not dense in $\partial E$, so the action $G \curvearrowright \partial E$ is not topologically free.
\end{proof}

We now turn to the situation where the vertex stabilisers of the action $G \curvearrowright E$ are all infinite cyclic. Recall from Notation~\ref{not:omegas} the definition of the integers $\omega_e, \omega_{\overline{e}}$ for each $e \in \Gamma^1$. We need the following concepts from \cite{BMPST}, adapted to our context.

\begin{defin}
	For a path $\alpha = e_1 e_2 \dots e_n$ in $\Gamma$, we define the \textit{signed index ratio} $q(\alpha) \in \Q^\times$ by
	\[q(\alpha) = \prod_{i=1}^n \frac{\omega_{\overline{e_i}}}{\omega_{e_i}}.\]
\end{defin}

Consistently with \cite{BMPST}, for $q \in \Q$ we write $\langle q \rangle$ for the smallest positive integer denominator that can be used to express $q$ as a fraction. Recall that for any infinite path $\lambda = e_1 e_2 \cdots$ and any $n \geq 1$, we denote by $\lambda_n$ the path $e_1 \ldots e_n$. For any finite path $\alpha = e_1 e_2 \dots e_k$ and any $1 \leq n \leq k$, we similarly denote by $\alpha_n$ the path $e_1 \dots e_n$.

\begin{prop} \label{prop:GBS topological freeness}
	Suppose that a discrete group $G$ acts on a row-finite, finitely aligned multitree $E$ with no sources, and suppose that $G_v \cong \Z$ for all $v \in E^0$. The induced action $G \curvearrowright \partial E$ is topologically free if and only if for all $v \in \Gamma^0$ there exists an infinite path $\lambda = e_1 e_2 \ldots \in v\Gamma^\infty$ such that
	\[\limsup_{k\to\infty}\,\langle q(\lambda_{k-1}) / \omega_{e_k} \rangle = \infty.\]
\end{prop}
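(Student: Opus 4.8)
The plan is to use that, for a discrete group, topological freeness of $G\curvearrowright\partial E$ means the points of $\partial E$ with trivial isotropy are dense, which (since the cylinder sets $Z_\partial(u')$, $u'\in E^0$, form a basis) is equivalent to: every $Z_\partial(u')$ contains a point of trivial isotropy. As every point of $Z_\partial(u')$ has the form $[\lambda']$ for a lift $\lambda'\in u'E^\infty$ of some $\lambda\in u\Gamma^\infty$ ($u$ the image of $u'$), I would reduce the whole statement to a criterion for when $[\lambda']$ has trivial isotropy. Writing $\lambda'=f_1f_2\cdots$ with vertices $w_0=r(f_1),w_1=s(f_1),\dots$, I first record that $g\cdot[\lambda']=[\lambda']$ iff $\sigma^m(g\cdot\lambda')=\sigma^n(\lambda')$ for some $m,n$, and split this into the \emph{vertical} case $m=n$ (where $g$ fixes the tail $\sigma^m(\lambda')$, hence all of $w_m,w_{m+1},\dots$, so $g\in\bigcap_{i\ge m}G_{w_i}$; here I use that in a multitree fixing the vertices of a path fixes the path) and the \emph{shift} case $m\ne n$. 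Thus $[\lambda']$ has trivial isotropy iff there is no vertical element and no shift element.

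The arithmetic heart is a lemma identifying the vertical part with the stated condition. Because each $G_{w_i}\cong\Z$ and $G_{w_{i-1}}\cap G_{w_i}=G_{f_i}$ with $[G_{w_{i-1}}:G_{f_i}]=|\omega_{e_i}|$ and $[G_{w_i}:G_{f_i}]=|\omega_{\overline{e_i}}|$, I would choose compatible embeddings $\Phi$ of the cyclic groups $G_{w_i}$ into $(\Q,+)$ sending $G_{w_0}$ to $\Z$, and compute $\Phi(G_{w_k})=\langle 1/q(\lambda_k)\rangle$ and $\Phi(G_{f_k})=\langle \omega_{e_k}/q(\lambda_{k-1})\rangle=\langle 1/s_k\rangle$, where $s_k:=q(\lambda_{k-1})/\omega_{e_k}$. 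Since $\bigcap_{i\ge m}G_{w_i}=G_{w_m}\cap\bigcap_{i>m}G_{f_i}$, this maps to a descending intersection of cyclic subgroups of $\Q$, and I would prove the elementary fact that $\bigcap_k\langle 1/s_k\rangle\ne 0$ iff the numerators $\langle s_k\rangle$ stay bounded (a nonzero common element forces $\langle s_k\rangle$ to divide a fixed integer, while bounded numerators admit the common element $\operatorname{lcm}_k\langle s_k\rangle$). Hence $\bigcap_{i\ge m}G_{w_i}$ is trivial iff $\limsup_{k}\langle s_k\rangle=\infty$; as $\limsup$ ignores finite initial segments, the union over $m$ is handled at once, and the whole vertical isotropy is trivial iff $\limsup_k\langle q(\lambda_{k-1})/\omega_{e_k}\rangle=\infty$. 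Crucially this depends only on $\lambda$, not on the chosen lift.

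For the forward direction I would argue: if the action is topologically free, then for each $v$ lift to $v'$ and pick a trivial-isotropy point $[\mu']\in Z_\partial(v')$; its vertical isotropy is in particular trivial, so the lemma applied to $\mu$ (the image of $\mu'$) gives $\limsup_k\langle q(\mu_{k-1})/\omega_{e_k}\rangle=\infty$, which is the required path in $v\Gamma^\infty$. For the reverse direction, given $u'$ I would take $\lambda\in u\Gamma^\infty$ with $\limsup=\infty$; this forces $|\omega_{e_k}|>1$ for infinitely many $k$ (otherwise the $s_k$ are integers of bounded denominator and the $\limsup$ is finite), and each such $k$ is a genuine branch point, where $e_k$ has $|\omega_{e_k}|\ge 2$ lifts from the current vertex by \lemref{lemma:path lifts}. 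I would then build a lift $\lambda'$ of $\lambda$ with range $u'$ whose $[\lambda']$ has no shift element, by a diagonal construction over the countably many triples $(g,m,n)$ with $g\ne 1$ and $m>n$: for each triple choose a branch point at a position $m+j+1$ beyond all earlier commitments and pick $f_{m+j+1}$ different from $g^{-1}\cdot f_{n+j+1}$ (possible since there are at least two lifts and $g$ is injective on edges), which permanently destroys the equality $\sigma^m(g\cdot\lambda')=\sigma^n(\lambda')$. By the lemma the vertical isotropy of $[\lambda']$ is automatically trivial, so $[\lambda']$ has trivial isotropy and lies in $Z_\partial(u')$, giving density.

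I expect the shift-avoidance in the reverse direction to be the main obstacle: triviality of the vertical isotropy alone does \emph{not} imply trivial isotropy, since an ``axis'' lift of an eventually periodic $\Gamma$-path can be fixed by a hyperbolic element even when $\limsup=\infty$ (the Baumslag--Solitar case $\langle a,t\mid ta^mt^{-1}=a^n\rangle$ with coprime $|m|,|n|>1$ is the model example). The point is that the branching forced by $\limsup=\infty$ provides exactly the freedom needed to steer the lift away from every such periodic pattern, and the elementary $\Q$-arithmetic lemma of the second step is the other ingredient that must be set up carefully.
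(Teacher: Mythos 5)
Your proposal is correct, but your backward direction takes a genuinely different route from the paper's. The parts you share with the paper: the reduction to producing trivial-isotropy points in every cylinder, the forward implication, and the arithmetic heart --- your compatible embeddings of the stabiliser chain into $(\Q,+)$ together with the elementary fact about intersections of cyclic subgroups of $\Q$ is an equivalent (arguably cleaner) packaging of Lemmas~\ref{lemma:GBS finite path isotropy} and~\ref{lemma:GBS topological freeness}, which the paper proves instead by induction along finite paths, tracking generators up to sign. For the backward implication the paper argues one group element at a time: given $g \neq 1$ and a cylinder $Z_\partial(v')$, if some lift $\lambda''$ of the chosen $\lambda$ satisfies $\sigma^m(g\cdot\lambda'') = \sigma^n(\lambda'')$, then $m = n$ is ruled out (Corollary~\ref{coro:finite path nontrivial isotropy} plus the fact that two finite-index subgroups of $\Z$ intersect nontrivially would contradict Lemma~\ref{lemma:GBS topological freeness}), while $m \neq n$ forces $\sigma^n(\lambda)$ to be periodic, $\eta\eta\cdots$, with $\langle q(\eta)\rangle > 1$; a single branch edge in $\eta$ and the disjointness of cylinders in a multitree (Proposition~\ref{prop:multitree cylinder intersection}) then yield an alternative lift $\lambda''_n\eta'\xi'$ that $g$ cannot fix. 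Your diagonalisation over all triples $(g,m,n)$ with $m>n$ instead manufactures a single trivial-isotropy point per cylinder, exploiting that $\limsup = \infty$ forces $\abs{\omega_{e_k}} \geq 2$ for infinitely many $k$, hence (Lemma~\ref{lemma:path lifts}) infinitely many genuine branch points at which the one forbidden edge $g^{-1}\cdot f_{n+j+1}$ can be avoided; your reduction of the $m<n$ case to the triple $(g^{-1},n,m)$ is also right. What each buys: yours gives the stronger per-cylinder conclusion and completely avoids the periodicity analysis and the claim $\langle q(\eta)\rangle>1$; the paper's per-element argument needs only one branch point, requires no enumeration of $G$ (your construction needs $G$ countable, though this matches the paper's standing hypotheses, and both proofs need countability, via Baire, in the forward direction anyway), and has lighter bookkeeping. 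One detail to pin down in a full write-up is the tail-invariance of the vertical criterion: if you anchor one compatible system of embeddings at $w_0$, then $\bigcap_{k>m} G_{f_k}$ corresponds to the intersection of the subgroups of $\Q$ generated by the $1/s_k$ with $k>m$, and ``limsup ignores initial segments'' is then literally correct; if you instead re-anchor at $w_m$, the relevant ratios become $s_{m+k}/q(\lambda_m)$, and you need the (easy) observation that multiplication by a fixed nonzero rational changes denominators by at most a bounded factor.
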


\begin{remark}\label{rem:BMPSTisWrong}
	We note that the terms $\langle q(\gamma_n) \rangle$ appearing in condition (2) of \cite[Theorem~7.5]{BMPST} are not quite correct, and should be replaced with $\langle q(\gamma_n) / \omega_{e_{n+1}} \rangle$ for respective $n \geq 1$. However, this does not affect the validity of the other results in \cite{BMPST}.
\end{remark}

To prove Proposition~\ref{prop:GBS topological freeness}, we need the following string of lemmas.

\begin{lemma} \label{lemma:GBS finite path isotropy}
	Suppose that a discrete group $G$ acts on a row-finite, finitely aligned multitree $E$ with no sources, and suppose that $G_v \cong \Z$ for all $v \in E^0$. Fix a finite path $\alpha = e_1 e_2 \dots e_n$ in $\Gamma$. For any lift $\alpha' \in E^n$ of $\alpha$, an element $m 1_{r(\alpha')} \in G_{r(\alpha')}$, $m \in \Z$ fixes $\alpha'$ if and only if $\langle q(\alpha_{k-1}) / \omega_{e_k} \rangle \mid m$ for all $1 \leq k \leq n$.
\end{lemma}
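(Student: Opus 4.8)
The plan is to reduce the claim to an edge-by-edge analysis together with one elementary fact about denominators. Write the lift as $\alpha' = e_1' e_2' \cdots e_n'$ and label its vertices $v_0 = r(\alpha')$, $v_1 = s(e_1') = r(e_2')$, \dots, $v_n = s(e_n')$, fixing a generator $1_{v_i}$ of each $G_{v_i} \cong \Z$. An element $g = m 1_{v_0} \in G_{v_0} = G_{r(\alpha')}$ fixes $\alpha'$ if and only if $g \cdot e_k' = e_k'$ for every $k$, since fixing each edge is exactly what it means to fix the path (and fixing an edge automatically fixes its endpoints).

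The key step is to track $g$ through the chain of inclusions. First I would record the elementary fact that for $r \in \Q^\times$ and $m \in \Z$ one has $mr \in \Z$ if and only if $\langle r \rangle \mid m$: writing $r = a/b$ in lowest terms with $b > 0$, we have $\langle r\rangle = b$, and $b \mid ma \iff b \mid m$ since $\gcd(a,b)=1$. Next I would prove by induction on $k$ that if $g$ fixes $e_1', \ldots, e_{k-1}'$ then $g \in G_{v_{k-1}}$ and, under the identifications of Notation~\ref{not:omegas}, equals $m\,q(\alpha_{k-1})\,1_{v_{k-1}}$ (with $q(\alpha_0)=1$, so the base case $k=1$ is just $g = m 1_{v_0}$). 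The inductive step uses the two embeddings attached to $e_k'$: the image of $G_{e_k'}$ in $G_{v_{k-1}}$ is the unique subgroup of index $[G_{r(e_k)}:G_{e_k}] = \abs{\omega_{e_k}}$, namely $\omega_{e_k}\Z \cdot 1_{v_{k-1}}$, so $g$ fixes $e_k'$ exactly when $\omega_{e_k} \mid m\, q(\alpha_{k-1})$; and when it does, pushing the generator $1_{e_k'}$ of $G_{e_k'}$ into $G_{v_k}$ via $1_{e_k'} \mapsto \omega_{\overline{e_k}} 1_{v_k}$ gives $g = m\, q(\alpha_{k-1}) (\omega_{\overline{e_k}}/\omega_{e_k}) 1_{v_k} = m\, q(\alpha_k)\,1_{v_k}$, which advances the induction.

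Combining these, $g$ fixes $e_k'$ (given that it fixes the earlier edges) if and only if $\omega_{e_k} \mid m\,q(\alpha_{k-1})$, equivalently $m \cdot (q(\alpha_{k-1})/\omega_{e_k}) \in \Z$, equivalently $\langle q(\alpha_{k-1})/\omega_{e_k}\rangle \mid m$ by the denominator fact. Running this through the induction, $g$ fixes all of $e_1', \ldots, e_n'$---equivalently fixes $\alpha'$---precisely when $\langle q(\alpha_{k-1})/\omega_{e_k}\rangle \mid m$ holds for every $1 \le k \le n$, which is the statement.

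I anticipate the main subtlety is the bookkeeping for an arbitrary lift $\alpha'$ whose vertices need not be the chosen lifts: the embedding data for $e_k'$ is only conjugate to that of the chosen lift, so the signs of $\omega_{e_k}$ and $\omega_{\overline{e_k}}$ may depend on the generator choices $1_{v_i}$. This does not affect the conclusion, because only the subgroup memberships (divisibility conditions) and the quantities $\langle\,\cdot\,\rangle$ enter, and $\langle q \rangle = \langle -q\rangle$; hence the computation may be carried out up to sign. I would justify the index equality $[G_{r(e_k)}:G_{e_k}] = \abs{\omega_{e_k}}$ and the orbit-invariance of the embeddings using the stabiliser properties of Section~\ref{subsec:groupactionsongraphs} together with the definitions in Notation~\ref{not:omegas}.
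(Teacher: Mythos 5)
Your proposal is correct and follows essentially the same route as the paper's proof: an induction along the path that tracks the element $m\,1_{r(\alpha')}$ through the edge stabilisers, identifying it (up to sign) with $m\,q(\alpha_{k-1})\,1_{v_{k-1}}$ at each stage, using the index $[G_{r(e_k)}:G_{e_k}] = \abs{\omega_{e_k}}$ for the divisibility condition and the embedding $1_{e_k'} \mapsto \omega_{\overline{e_k}} 1_{v_k}$ to advance the induction. Your additional care about the elementary denominator fact and the sign ambiguities from generator choices (resolved via $\langle q \rangle = \langle -q \rangle$ and uniqueness of finite-index subgroups of $\Z$) makes explicit what the paper handles with its ``up to sign'' phrasing, but is not a different argument.
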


\begin{proof}
	We show by induction on $n$ that $m 1_{r(\alpha')}$ fixes $\alpha'$ if and only if $mq(\alpha_{k-1}) / \omega_{e_k}$ is an integer for all $1 \leq k \leq n$; the claim of the lemma follows from this. (Note that $\alpha_0:=r(\alpha)$.) Moreover, we show in the same induction that if $m 1_{r(\alpha')}$ fixes $\alpha'$, then $m q(\alpha)$ is an integer and $m 1_{r(\alpha')} = m q(\alpha) 1_{s(\alpha')}$ up to sign.
	
	If $n = 1$, then $\alpha = e_1$, and for any lift $e'$ of $e_1$, we have that $G_{e'}$ is the subgroup of $G_{r(e')}$ of index $\abs{\omega_{e_1}}$. Hence $m 1_{r(\alpha')}$ fixes $\alpha'$ if and only if $\omega_{e_1} \mid m$, or equivalently, if $m/\omega_{e_1}$ is an integer. Since $q(\alpha_0) = q(r(\alpha)) = 1$ the first claim holds for $n=1$. For the second claim, note that $m1_{r(e')} = (m/\omega_{e_1})1_{e'} = (m\omega_{\overline{e_1}}/\omega_{e_1})1_{s(e')}$ up to sign, and $m\omega_{\overline{e_1}}/\omega_{e_1} = mq(e_1)$ is an integer since $m/\omega_{e_1}$ is.
	
	Now suppose that the inductive claim holds for all paths of length $n = l$. Fix a finite path $\alpha = e_1 e_2 \dots e_{l+1}$ in $\Gamma$ of length $l+1$, and a lift $\alpha' = e'_1 e'_2 \dots e'_{l+1} \in E^{l+1}$ of $\alpha$. Fix $m \in \Z$. Now $m 1_{r(\alpha')}$ fixes $\alpha'$ if and only if it fixes both $\alpha'_l$ and $e'_{l+1}$. The inductive assumption gives that $m 1_{r(\alpha')}$ fixes $\alpha'_l$ if and only if $mq(\alpha_{k-1}) / \omega_{e_k}$ is an integer for all $1 \leq k \leq l$, and that in this case, $m' := mq(\alpha_l)$ is an integer and $m' 1_{s(\alpha'_l)} = m 1_{r(\alpha')}$ (up to sign) in $G$. Now from the $n=1$ case, we know that $m'1_{s(\alpha'_l)} = m'1_{r(e'_{l+1})}$ fixes $e'_{l+1}$ if and only if $m'/\omega_{e_{l+1}} = mq(\alpha_l)/\omega_{e_{l+1}}$ is an integer. This proves the first inductive claim for $n = l+1$.
    
    For the second claim, note that the inductive assumption gives that $m q(\alpha_l)$ is an integer and that $m 1_{r(\alpha')} = m q(\alpha_l) 1_{s(\alpha'_l)} = m q(\alpha_l) 1_{r(e'_{l+1})}$ up to sign. But the $n=1$ case for the edge $e'_{l+1}$ (with $m'$ in place of $m$) gives that $m'q(e_{l+1}) = mq(\alpha_l)q(e_{l+1}) = mq(\alpha)$ is an integer and that $m q(\alpha_l) 1_{r(e'_{l+1})} = mq(\alpha) 1_{s(e'_{l+1})} = mq(\alpha) 1_{s(\alpha')}$ up to sign. Thus the second claim also holds.
	
	Hence the inductive claim holds for all $n \geq 1$, and we are done.
\end{proof}

\begin{coro} \label{coro:finite path nontrivial isotropy}
    Suppose that a discrete group $G$ acts on a row-finite, finitely aligned multitree $E$ with no sources, and suppose that $G_v \cong \Z$ for all $v \in E^0$. Every finite path in $E$ has non-trivial isotropy.
\end{coro}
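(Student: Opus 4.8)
The plan is to read the isotropy group of an arbitrary finite path straight off \lemref{lemma:GBS finite path isotropy}. A path of length $0$ is just a vertex $v' \in E^0$, whose isotropy group is $G_{v'} \cong \Z$ and hence already non-trivial, so I may assume the path has length at least $1$. Given such a path $\beta$ in $E$, I would let $\alpha = e_1 \cdots e_n$ be its image in $\Gamma = G\backslash E$ under the quotient map, so that $\beta$ is a lift $\alpha'$ of $\alpha$ in the sense of the lemma. Since any element fixing a path must fix its range vertex, the isotropy group $G_\beta = G_{\alpha'}$ is a subgroup of $G_{r(\alpha')} \cong \Z$ and therefore consists exactly of the elements $m 1_{r(\alpha')}$ for $m$ in some subgroup of $\Z$; \lemref{lemma:GBS finite path isotropy} identifies which $m$ occur, namely those with $\langle q(\alpha_{k-1}) / \omega_{e_k} \rangle \mid m$ for every $1 \le k \le n$.

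The key observation is then that each quantity $\langle q(\alpha_{k-1}) / \omega_{e_k} \rangle$ is, by the very definition of $\langle\,\cdot\,\rangle$ as a smallest positive denominator, a \emph{positive integer}, and that there are only finitely many such constraints (one per edge of $\alpha$). I can therefore take $m$ to be their least common multiple, or simply their product, which is a strictly positive integer divisible by each $\langle q(\alpha_{k-1}) / \omega_{e_k} \rangle$. By the lemma this $m$ satisfies the divisibility criterion, so $m 1_{r(\alpha')} \in G_{\alpha'}$; and since $m \neq 0$ while $1_{r(\alpha')}$ generates $G_{r(\alpha')} \cong \Z$, the element $m 1_{r(\alpha')}$ is non-trivial. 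Hence $G_\beta$ contains a non-identity element, as required.

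I do not expect a genuine obstacle here: the corollary is a direct packaging of the lemma, and the only point needing (minor) care is to confirm that the finitely many divisibility constraints can be met simultaneously by a single nonzero integer, which is immediate since each is divisibility by a positive integer and there are finitely many. It is worth flagging that finiteness of $n$ is essential: for an \emph{infinite} path the corresponding intersection of the subgroups $\langle q(\lambda_{k-1}) / \omega_{e_k} \rangle \Z$ can collapse to $\{0\}$, and it is exactly this possibility that \proref{prop:GBS topological freeness} detects and uses to characterise topological freeness.
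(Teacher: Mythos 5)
Your proof is correct and is exactly the argument the paper intends: the corollary is stated without proof as an immediate consequence of Lemma~\ref{lemma:GBS finite path isotropy}, and your packaging (restrict to the range stabiliser $\cong \Z$, note the finitely many divisibility constraints $\langle q(\alpha_{k-1})/\omega_{e_k}\rangle \mid m$ are each by a positive integer, and take $m$ to be their least common multiple) is the intended one, with the length-$0$ case handled correctly as well. Your closing remark correctly identifies why finiteness of the path is essential and how this contrasts with Lemma~\ref{lemma:GBS topological freeness} for infinite paths.
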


\begin{lemma} \label{lemma:GBS topological freeness}
	Suppose that a discrete group $G$ acts on a row-finite, finitely aligned multitree $E$ with no sources, and suppose that $G_v \cong \Z$ for all $v \in E^0$. Fix an infinite path $\lambda = e_1 e_2 \ldots \in \Gamma^\infty$ and a lift $\lambda' \in E^\infty$ of $\lambda$. We have
	\[G_{\lambda'} = 0 \quad \iff \quad \limsup_{k \to \infty}\,\langle q(\lambda_{k-1}) / \omega_{e_k} \rangle = \infty.\]
\end{lemma}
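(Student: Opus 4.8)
The plan is to reduce the statement about the infinite path $\lambda'$ to the finite-path computation already carried out in Lemma~\ref{lemma:GBS finite path isotropy}, and then to read off the dichotomy from the behaviour of the positive integers $d_k := \langle q(\lambda_{k-1})/\omega_{e_k} \rangle$.

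First I would observe that $G_{\lambda'} \subseteq G_{r(\lambda')}$: any element fixing the infinite path $\lambda' = e'_1 e'_2 \cdots$ in particular fixes its range vertex $r(\lambda') = r(e'_1)$. Since $G_{r(\lambda')} \cong \Z$, every element of $G_{\lambda'}$ has the form $m 1_{r(\lambda')}$ for a unique $m \in \Z$. Moreover $m 1_{r(\lambda')}$ fixes $\lambda'$ if and only if it fixes every finite initial segment $\lambda'_n$, and by Lemma~\ref{lemma:GBS finite path isotropy} the latter holds precisely when $d_k \mid m$ for all $1 \le k \le n$. Letting $n \to \infty$ yields the clean description
\[
G_{\lambda'} = \{\, m 1_{r(\lambda')} : d_k \mid m \text{ for all } k \ge 1 \,\}.
\]

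Next I would split into two cases according to whether the sequence $(d_k)$ is bounded. If $\limsup_{k} d_k = \infty$ and $m 1_{r(\lambda')} \in G_{\lambda'}$ with $m \ne 0$, then $d_k \mid m$ forces $d_k \le \abs{m}$ for every $k$, contradicting unboundedness; hence $m = 0$ and $G_{\lambda'} = 0$. Conversely, if $\limsup_k d_k < \infty$, then $(d_k)$ is a bounded sequence of positive integers and therefore takes only finitely many distinct values; their least common multiple $L$ is a positive integer divisible by every $d_k$, so $L 1_{r(\lambda')}$ is a nonzero element of $G_{\lambda'}$, whence $G_{\lambda'} \ne 0$. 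Combining the two cases gives the claimed equivalence.

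The bulk of the technical content lives in Lemma~\ref{lemma:GBS finite path isotropy}, so I expect no serious obstacle here; the one point requiring care is the passage from finite to infinite paths, namely arguing that an element fixing $\lambda'$ is exactly one fixing all initial segments simultaneously, and then handling the resulting infinite system of divisibility conditions via the boundedness dichotomy rather than via an \textit{a priori} possibly-nonexistent least common multiple of infinitely many integers.
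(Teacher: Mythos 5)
Your proposal is correct and follows essentially the same route as the paper's own proof: reduce to the cyclic group $G_{r(\lambda')}$, apply Lemma~\ref{lemma:GBS finite path isotropy} to translate fixing $\lambda'$ into the divisibility conditions $\langle q(\lambda_{k-1})/\omega_{e_k}\rangle \mid m$ for all $k$, and then split on boundedness of that sequence, using the least common multiple of the finitely many values in the bounded case. The paper's proof is the same argument, only stated slightly more tersely.
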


\begin{proof}
	First note that $G_{\lambda'}$ is a subgroup of $G_{r(\lambda')}$. Now, an element $m 1_{r(\lambda')}$, $m \in \Z$ fixes $\lambda'$ if and only if it fixes $\lambda'_k$ for all $k \geq 1$. By Lemma~\ref{lemma:GBS finite path isotropy} this is equivalent to the condition that $\langle q(\lambda_{k-1}) / \omega_{e_k} \rangle \mid m$ for all $k \geq 1$. If $\limsup_{k \to \infty}\,\langle q(\lambda_{k-1}) / \omega_{e_k} \rangle = \infty$, then the set $\{\langle q(\lambda_{k-1}) / \omega_{e_k} \rangle : k \geq 1\}$ is unbounded, and so the only value of $m$ satisfying $\langle q(\lambda_{k-1}) / \omega_{e_k} \rangle \mid m$ for all $k \geq 1$ is $m = 0$. Hence $G_{\lambda'} = 0$ in this case. On the other hand, if $\limsup_{k \to \infty}\,\langle q(\lambda_{k-1}) / \omega_{e_k} \rangle < \infty$, then the set $\{\langle q(\lambda_{k-1}) / \omega_{e_k} \rangle : k \geq 1\}$ is bounded. This set therefore has a (finite) least common multiple $M$, and by definition we have that $\langle q(\lambda_{k-1}) / \omega_{e_k} \rangle \mid M$ for all $k \geq 1$. Hence $M 1_{r(\lambda')} \neq 0$ fixes $\lambda'$, so $G_{\lambda'}$ is not trivial in this case.
\end{proof}

\begin{proof}[Proof of Proposition~\ref{prop:GBS topological freeness}]
	For the forward direction, suppose that the action $G \curvearrowright \partial E$ is topologically free, and fix $v \in \Gamma^0$. Choose any lift $v' \in E^0$ of $v$. Since $G \curvearrowright \partial E$ is topologically free, there is some infinite path $\lambda' \in v'E^\infty$ such that $[\lambda']$ has trivial isotropy. In particular, $G_{\lambda'} = 0$, and so writing $\lambda \in v\Gamma^\infty$ for the image of $\lambda'$ under the quotient map, Lemma~\ref{lemma:GBS topological freeness} shows that $\lambda$ satisfies the required condition.
	
	We now prove the backward direction. Fix $v' \in E^0$ and $g \in G\setminus\{1\}$; it is enough to show that there is some infinite path $\lambda' \in v'E^\infty$ such that $g$ does not fix $[\lambda']$. Write $v \in \Gamma^0$ for the image of $v'$ under the quotient map. By assumption, there is an infinite path $\lambda = e_1 e_2 \ldots \in v\Gamma^\infty$ satisfying $\limsup_{k\to\infty}\,\langle q(\lambda_{k-1}) / \omega_{e_k} \rangle = \infty$. If $g$ does not fix $[\lambda']$ for any lift $\lambda' \in v'E^\infty$ of $\lambda$, then we are done, so suppose that there is some lift $\lambda'' \in v'E^\infty$ of $\lambda$ such that $g \cdot [\lambda''] = [\lambda'']$. This means that there are $m,n \in \N$ such that $\sigma^m(g \cdot \lambda'') = \sigma^n(\lambda'')$.
	
	First we claim that $m \neq n$. Suppose for a contradiction that $m=n$. Then $g$ fixes $\sigma^n(\lambda'')$, so $\sigma^n(\lambda'')$ has non-trivial isotropy. This means that the stabiliser $G_{\sigma^n(\lambda'')}$ is some finite-index subgroup of $G_{r(\sigma^n(\lambda''))}$. Now by Corollary~\ref{coro:finite path nontrivial isotropy}, we have that $\lambda''_n$ also has non-trivial isotropy, and thus $G_{\lambda''_n}$ is also some finite-index subgroup of $G_{r(\sigma^n(\lambda''))} = G_{s(\lambda''_n)}$. Since $G_{r(\sigma^n(\lambda''))} \cong \Z$, the intersection of two finite-index subgroups is again a finite-index subgroup, and so is in particular not the trivial subgroup $\{0\}$. This means that $G_{\sigma^n(\lambda'')} \cap G_{\lambda''_n}$ is non-trivial. But an element in $G_{r(\sigma^n(\lambda''))}$ fixes $\lambda''$ if and only if it fixes both $\lambda''_n$ and $\sigma^n(\lambda'')$, so $G_{\lambda''} = G_{\sigma^n(\lambda'')} \cap G_{\lambda''_n}$. So $G_{\lambda''}$ is non-trivial, contradicting Lemma~\ref{lemma:GBS topological freeness} and the choice of $\lambda''$. Hence $m \neq n$.
	
	Now suppose that $m < n$. Writing $\mu'' = \sigma^n(\lambda'')$, we have that $g \cdot \mu'' = \sigma^{n-m}(\mu'')$. This implies that $\mu := \sigma^n(\lambda)$, which is the image of $\mu''$ under the quotient map, has the form $\eta\eta\cdots$ for some cycle $\eta$ in $\Gamma$ of length $n-m$. We claim that $\langle q(\eta) \rangle > 1$. Suppose for a contradiction that $\langle q(\eta) \rangle = 1$, so $q(\eta)$ is an integer. Write $a_k := q(\lambda_{k-1})/\omega_{e_k}$ for $k \geq 1$. Since $\sigma^n(\lambda)$ is periodic (with period $n-m$), we get that $a_{k + n-m} = q(\eta)a_k$ for all $k > n$. Since $q(\eta)$ is an integer, this implies that $\langle a_{k + n-m} \rangle \leq \langle a_k \rangle$ for all $k > n$, and so the sequence $\{\langle a_k \rangle\}_{k=1}^\infty = \{\langle q(\lambda_{k-1})/\omega_{e_k} \rangle\}_{k=1}^\infty$ is bounded, contradicting the choice of $\lambda$. Hence $\langle q(\eta) \rangle > 1$.
	
	Now we must have that $\abs{\omega_{e}} > 1$ for some edge $e$ in $\eta$; otherwise $q(\eta)$ would be an integer and hence $\langle q(\eta) \rangle = 1$, contradicting the above. By Lemma~\ref{lemma:path lifts}, this means that for every lift $w' \in E^0$ of $r(\eta)$, there is more than one lift of $\eta$ with range $w'$.
	
	We are now ready to construct an infinite path $\lambda' \in v'E^\infty$ such that $g \cdot [\lambda'] \neq [\lambda']$. Notice that $\mu''_{n-m}$ is a lift of $\eta$ with range $r(\mu'')$. By the above discussion, there is a distinct lift $\eta'$ of $\eta$ with the same range. Let $\xi'$ be a lift of $\eta\eta\cdots$ with range $s(\eta')$, and write $\lambda' = \lambda''_n \eta' \xi'$. We show that $g$ does not fix $[\eta'\xi'] = [\lambda']$. Note that $g$ sends $r(\eta') = r(\mu''_{n-m})$ to $s(\mu''_{n-m})$, since $g \cdot \mu'' = \sigma^{n-m}(\mu'')$. So $g \cdot \eta'\xi' \in s(\mu''_{n-m})E^\infty$.
	
	We claim that $Z_\partial(s(\eta')) \cap Z_\partial(s(\mu''_{n-m})) = \emptyset$; this would then imply that $\xi' \in s(\eta')E^\infty$ (and therefore $\lambda'$) is not shift-tail equivalent to $g \cdot \eta'\xi' \in s(\mu''_{n-m})E^\infty$ (and hence to $g \cdot \lambda'$). Suppose for a contradiction that $Z_\partial(s(\eta')) \cap Z_\partial(s(\mu''_{n-m})) \neq \emptyset$. Then Proposition~\ref{prop:multitree cylinder intersection} implies that $s(\eta')$ and $s(\mu''_{n-m})$ have some (minimal) common upper bound $u \in E^0$. By definition, this means that there are paths $\alpha'$ and $\alpha''$ from $u$ to $s(\eta')$ and $s(\mu''_{n-m})$ respectively. But then $\eta'\alpha'$ and $\mu''_{n-m}\alpha''$ would both be paths from $u$ to $r(\mu'') = r(\eta')$, and they are distinct since $\eta' \neq \mu''_{n-m}$ by construction and since $\eta'$ and $\mu''_{n-m}$ have the same length. This contradicts the fact that $E$ is a multitree, and hence $Z_\partial(s(\eta')) \cap Z_\partial(s(\mu''_{n-m})) = \emptyset$ as claimed. So $\lambda'$ and $g \cdot \lambda'$ are not shift-tail equivalent, meaning that $g$ does not fix $[\lambda'] \in Z_\partial(v')$, as required.
	
	Finally suppose that $m > n$. Note that
	\[\sigma^n(g^{-1} \cdot \lambda'') = g^{-1} \cdot \sigma^n(\lambda'') = g^{-1} \cdot \sigma^m(g \cdot \lambda'') = g^{-1}g \cdot \sigma^m(\lambda'') = \sigma^m(\lambda'').\]
	Since $n < m$, the argument for the $m<n$ case (with $m$ and $n$ swapped) gives the existence of an infinite path $\lambda' \in v'E^\infty$ such that $g^{-1} \cdot [\lambda'] \neq [\lambda']$. But acting by $g$ on both sides of the equation gives that $[\lambda'] \neq g \cdot [\lambda']$, which is what we needed.
\end{proof}

\subsection{Amenability} \label{subsec:amenability}

For actions on undirected trees, we have that the amenability of all vertex stabilisers ensures the amenability of the action $G \curvearrowright \partial T$ \cite[Theorem 5.29]{BMPST}. We would like something similar in the multitree context. It is unclear to the authors what the situation is in general, however we are able to prove Proposition~\ref{prop:directed tree amenability}, which is the corresponding result for actions on directed trees.

In this section we use the formulation of an amenable group action given in \cite[Definition 2.1]{AD2}, which is as follows. For any locally compact group $G$, write $\Prob(G)$ for the space of probability measures on $G$ equipped with the weak*-topology. Given $s, t \in G$, $f \in C_c(G)$ and $m \in \Prob(G)$, set
\[(s \cdot f)(t) = f(s^{-1} t) \quad \text{and} \quad (s \cdot m)(f) = m(s^{-1} \cdot f).\]
An action of $G$ on some locally compact Hausdorff space $X$ is \textit{amenable} if there is a net $(m_i)_{i \in I}$ of continuous maps $x \mapsto m_i^x$ from $X$ to $\Prob(G)$ such that
\[\lim_i \norm{s \cdot m_i^x - m_i^{s \cdot x}}_1 = 0\]
uniformly for $(x, s)$ in compact subsets of $X \times G$.

\begin{prop} \label{prop:directed tree amenability}
	Let $G$ be a discrete group acting on a directed tree $T_+$. If for all $v \in T^0$ the stabiliser $G_v \leq G$ is an amenable group, then the induced action $G \curvearrowright \partial T_+$ is amenable.
\end{prop}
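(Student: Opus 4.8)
The plan is to verify the Reiter-type condition in the definition of amenability recalled just above, by combining two ingredients: the amenability of the action on the \emph{vertex} set, which comes for free from the hypotheses, and a ``descending geodesic'' construction that transports this amenability to the boundary. Throughout I use the reformulation (standard for this kind of approximation definition) that $G \curvearrowright \partial T_+$ is amenable if and only if for every compact $K \subseteq \partial T_+$, every finite $F \subseteq G$ and every $\varepsilon > 0$ there is a continuous map $m \colon \partial T_+ \to \Prob(G)$ with $\norm{s \cdot m^x - m^{s\cdot x}}_1 < \varepsilon$ for all $x \in K$ and $s \in F$. Since $\{Z_\partial(v)\}$ is a basis of compact open sets, it suffices to treat $K = Z_\partial(v_0)$ for a single vertex $v_0 \in T^0$.

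First I would record that $G \curvearrowright T^0$ is an amenable action: by Remark~\ref{rem:amenabilityassumptions} (via \cite[Remarque~4.10]{AD}) this holds precisely because every $G_v$ is amenable, and it provides a net $(\xi_i)$ of continuous maps $w \mapsto \xi_i^w$ from $T^0$ to $\Prob(G)$ with $\norm{s \cdot \xi_i^w - \xi_i^{s\cdot w}}_1 \to 0$ uniformly for $(w,s)$ ranging over finite subsets of $T^0 \times G$. The geometric step is to build a continuous map $\mu_n \colon Z_\partial(v_0) \to \Prob(T^0)$ as follows. Every $x \in Z_\partial(v_0)$ has $v_0$ as an ancestor, so by the multitree property there is a \emph{unique} directed path from $v_0$ into the tail of $x$; writing $x_1 \ge x_2 \ge \cdots$ for its successive vertices, set $\mu_n^x := \tfrac1n \sum_{k=1}^n \delta_{x_k}$. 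Row-finiteness (and no sources) guarantees that these vertices lie in the finite ``descending ball'' $B^-(v_0,n)$ of vertices reachable from $v_0$ by a directed path of length $\le n$, so the supports of $\mu_n^x$ lie in a \emph{fixed finite set} as $x$ varies over $K$. Crucially, no upward local finiteness of the underlying undirected tree is needed here — this is exactly why the directed case must be handled separately from the undirected Brown--Ozawa argument, which confines geodesics to two-sided balls.

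I would then compose the two pieces: define $m^x := \int_{T^0} \xi_i^w \, d\mu_n^x(w) = \tfrac1n \sum_{k=1}^n \xi_i^{x_k} \in \Prob(G)$, extending $m$ continuously to all of $\partial T_+$ using consistently translated basepoints $s v_0$ on the charts $Z_\partial(s v_0)$ (gluing overlaps by a partition of unity). The equivariance error splits as usual into a ``geometric'' term and an ``amenability'' term. The amenability term is $\tfrac1n \sum_k \norm{s \cdot \xi_i^{x_k} - \xi_i^{s\cdot x_k}}_1$; since all the $x_k$ lie in the fixed finite set $B^-(v_0,n) \cup F\cdot B^-(v_0,n)$, this is made $< \varepsilon/2$ by choosing $i$ far enough along the net. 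The geometric term compares the descending geodesics into a common end issued from the different basepoints $v_0$ and $s v_0$ ($s \in F$); by the multitree (unique directed path) property and Lemma~\ref{lemma:multitree cylinder sets}/Proposition~\ref{prop:multitree cylinder intersection}, any two such geodesics merge onto the common tail after a prefix whose length is bounded by a constant $C(F,v_0)$ depending only on $F$ and $v_0$, not on $x$. Hence this term is at most $2C(F,v_0)/n$, which is $< \varepsilon/2$ once $n$ is large.

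The main obstacle, and the part requiring the most care, is the geometric step: the absence of a canonical basepoint or ``level'' for a directed end means the map $\mu_n$ is only naturally defined chart-by-chart, and the honest work is in gluing these while proving the uniform fellow-traveling estimate $C(F,v_0)$ — that two descending geodesics reaching the same end from finitely many distinct basepoints coincide after a bounded prefix. This is precisely where the multitree hypothesis (at most one directed path between vertices) does the heavy lifting, playing the role that two-sided local finiteness plays in the undirected setting. Once this uniform bound is in hand, letting $n \to \infty$ and then pushing $i$ along the net yields the required $m$, and the reformulated definition of amenability gives the conclusion.
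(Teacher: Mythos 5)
Your route is genuinely different from the paper's. The paper's proof is a short reduction: it passes to the underlying undirected tree $T$, quotes \cite[Theorem~5.29]{BMPST} to get amenability of $G \curvearrowright \partial T$ from amenability of the vertex stabilisers, and pulls this back through the continuous $G$-equivariant map $\varphi\colon \partial T_+ \to \partial T$, $[\mu]_+ \mapsto [\mu]$, using Lemma~\ref{lemma:subspace amenability}; essentially all the work goes into proving continuity of $\varphi$. You instead run a Brown--Ozawa-style construction directly on $\partial T_+$, transporting Reiter functions for $G \curvearrowright T^0$ along directed geodesics. The skeleton (unique directed representative of each point of $Z_\partial(v_0)$, local constancy of $x \mapsto x_k$, finiteness of descending balls from row-finiteness, splitting into an amenability term and a fellow-traveling term) is sound, and if completed it would have the genuine advantage of never invoking the undirected boundary, hence avoiding the question of whether the underlying undirected tree falls within the scope of \cite[Theorem~5.29]{BMPST}. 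But as written there is a gap at exactly the step you flag as the main obstacle, and the fellow-traveling bound alone does not close it.

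The gap is the gluing. First, the charts $Z_\partial(sv_0)$, $s \in G$, need not cover $\partial T_+$ (an end of $T_+$ need not lie below any translate of $v_0$), so ``extending $m$ continuously to all of $\partial T_+$'' over these charts is not possible as stated; the same unaddressed issue sits inside your opening claim that it suffices to treat $K = Z_\partial(v_0)$. Second, and more seriously, a partition-of-unity combination $m^x = \sum_v \phi_v(x)\, m_v^x$ does not inherit approximate equivariance: pairing $v \leftrightarrow sv$ in $s\cdot m^x - m^{s\cdot x}$ produces, besides your two terms, terms weighted by $\abs{\phi_v(x) - \phi_{sv}(s\cdot x)}$, and nothing makes the partition functions approximately equivariant --- controlling them is essentially the original problem. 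One can instead bound $\norm{s\cdot m^x - m^{s\cdot x}}_1$ by the maximum of $\norm{s\cdot m_v^x - m_w^{s\cdot x}}_1$ over charts in the two supports (convexity), but that requires \emph{all} relevant basepoints to be within bounded distance of each other, which fails if you allow all $s \in G$ (infinitely many translates can lie above a single end). The repair is to use only the finitely many basepoints $\{tv_0 : t \in F \cup \{1\}\}$: their cylinder sets are compact open and cover $K \cup F\cdot K$; disjointify these clopen sets, define $m$ on each piece by the corresponding basepoint, and set $m$ equal to a fixed measure on the clopen complement (continuity is then automatic, and the estimate is only required at $x \in K$, $s \in F$, whose relevant points stay in the covered region). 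For $x$ in the piece of $tv_0$ and $s\cdot x$ in the piece of $t'v_0$ one then has
\[
\norm{s\cdot m^x - m^{s\cdot x}}_1 \le \norm{s\cdot m_{tv_0}^x - m_{stv_0}^{s\cdot x}}_1 + \norm{m_{stv_0}^{s\cdot x} - m_{t'v_0}^{s\cdot x}}_1,
\]
where the first term is your amenability term (since $s$ carries the unique directed geodesic from $tv_0$ into $x$ to the one from $stv_0$ into $s\cdot x$), and the second is your fellow-traveling term, now between the basepoints $stv_0$ and $t'v_0$, whose distance is at most $3C(F,v_0)$. With this restriction and disjointification your argument goes through; without it, the construction as described does not produce a well-defined continuous $m$ with the required estimate.
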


We first prove the following lemma.

\begin{lemma} \label{lemma:subspace amenability}
	Let $G$ be a locally compact group, and let $X$ and $Y$ be locally compact spaces on which $G$ acts. Suppose that $G \curvearrowright X$ is amenable, and that there exists a continuous, $G$-equivariant map $\varphi\colon Y \to X$. Then the action $G \curvearrowright Y$ is amenable.
\end{lemma}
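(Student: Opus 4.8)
The plan is to transport the net witnessing amenability of $G \curvearrowright X$ back to $Y$ along $\varphi$. By hypothesis there is a net $(m_i)_{i \in I}$ of continuous maps $x \mapsto m_i^x$ from $X$ to $\Prob(G)$ with $\norm{s \cdot m_i^x - m_i^{s \cdot x}}_1 \to 0$ uniformly on compact subsets of $X \times G$. I would define maps $n_i \colon Y \to \Prob(G)$ by $n_i^y := m_i^{\varphi(y)}$ and claim that $(n_i)_{i \in I}$ witnesses amenability of $G \curvearrowright Y$.

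First I would check the two requirements. Continuity of each $n_i$ is immediate, since $n_i = m_i \circ \varphi$ is a composition of continuous maps. For the asymptotic invariance, the key observation is that $G$-equivariance of $\varphi$ gives $\varphi(s \cdot y) = s \cdot \varphi(y)$, so that
\[
\norm{s \cdot n_i^y - n_i^{s \cdot y}}_1 = \norm{s \cdot m_i^{\varphi(y)} - m_i^{\varphi(s \cdot y)}}_1 = \norm{s \cdot m_i^{\varphi(y)} - m_i^{s \cdot \varphi(y)}}_1,
\]
which is exactly the expression controlled by the hypothesis, evaluated at the point $x = \varphi(y) \in X$.

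It then remains to upgrade pointwise control to uniform control on compacta, and this is the only place where a little care is needed (though it is routine). Given a compact set $K \subseteq Y \times G$, its image $L := (\varphi \times \id_G)(K)$ is a compact subset of $X \times G$, as the continuous image of a compact set. Since every $(y,s) \in K$ satisfies $(\varphi(y), s) \in L$, the displayed equality gives
\[
\sup_{(y,s) \in K} \norm{s \cdot n_i^y - n_i^{s \cdot y}}_1 \le \sup_{(x,s) \in L} \norm{s \cdot m_i^x - m_i^{s \cdot x}}_1,
\]
and the right-hand side tends to $0$ by the assumed uniform convergence over the compact set $L$. Hence $(n_i)$ satisfies the defining condition for amenability of $G \curvearrowright Y$. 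I expect no substantive obstacle here: the statement is essentially a pullback (functoriality) property of the amenability condition, and the verification reduces entirely to equivariance together with the fact that $\varphi \times \id_G$ carries compact sets to compact sets.
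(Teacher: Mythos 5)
Your proof is correct and is essentially identical to the paper's: both pull the net back along $\varphi$ to get $n_i = m_i \circ \varphi$, use $G$-equivariance to reduce the invariance condition to the one on $X$, and obtain uniformity on compacta from the fact that $\varphi \times \id_G$ maps compact subsets of $Y \times G$ to compact subsets of $X \times G$. If anything, your explicit supremum inequality over $K$ and $L := (\varphi \times \id_G)(K)$ states the uniformity step slightly more carefully than the paper does.
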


\begin{proof}
	By definition of the amenability of $G \curvearrowright X$, there is a net $(m_i)_{i \in I}$ of continuous maps $x \mapsto m_i^x$ from $X$ to $\Prob(G)$ such that
	\begin{equation} \label{eq:amenability}
	  \lim_i \norm{s \cdot m_i^x - m_i^{s \cdot x}}_1 = 0
	\end{equation}
	uniformly for $(x, s)$ in compact subsets of $X \times G$. Now since $\varphi \colon Y \to X$ is assumed to be continuous, each $m_i \circ \varphi$ is a continuous map from $Y$ to $\Prob(G)$, where $y \mapsto m_i^{\varphi(y)}$. Now, using Equation~\eqref{eq:amenability} and the $G$-equivariance of $\varphi$, we get that
	\[\lim_i \norm{s \cdot (m_i \circ \varphi)^y - (m_i \circ \varphi)^{s \cdot y}}_1 = \lim_i \norm{s \cdot m_i^{\varphi(y)} - m_i^{\varphi(s \cdot y)}}_1 = \lim_i \norm{s \cdot m_i^{\varphi(y)} - m_i^{s \cdot \varphi(y)}}_1 = 0\]
	uniformly for $(\varphi(y), s)$ in compact subsets of $X \times G$. But $\varphi \times \id_G$ maps any compact set in $Y \times G$ to a compact subset of $X \times G$, since continuous images of compact sets are compact and $\varphi \times \id_G$ is continuous. Hence the limit is uniform for $(y, s)$ in compact subsets of $Y \times G$, and so $G \curvearrowright Y$ is amenable.
\end{proof}

\begin{proof}[Proof of Proposition \ref{prop:directed tree amenability}]
	Write $T$ for the underlying undirected tree of $T_+$. The undirected tree $T$ inherits an action of $G$ from $T_+$, with the same vertex stabilisers. In particular, \cite[Theorem 5.29]{BMPST} implies that the action $G \curvearrowright \partial T$ is amenable. We aim to apply Lemma \ref{lemma:subspace amenability} with $X = \partial T$ and $Y = \partial T_+$.
	
	For $\mu \in T_+^\infty$ we write $[\mu]_+$ for the shift-tail equivalence class of $\mu$ in $\partial T_+$ to distinguish it from the shift-tail equivalence class $[\mu]$ in $\partial T$. Define a map $\varphi\colon \partial T_+ \to \partial T$ by $\varphi([\mu]_+) = [\mu]$ for all $\mu \in T_+^\infty$. This map is well-defined since for any $\mu_1, \mu_2 \in T_+^\infty$, we have $[\mu_1]_+ = [\mu_2]_+ \implies [\mu_1] = [\mu_2]$. Moreover, $\varphi$ is $G$-equivariant since for all $g \in G$ and $\mu \in T_+^\infty$ we have
	\[\varphi(g \cdot [\mu]_+) = \varphi([g \cdot \mu]_+) = [g \cdot \mu] = g \cdot [\mu] = g \cdot \varphi([\mu]_+).\]
	
	We now show that $\varphi$ is continuous. Recall that $\partial T$ has a basis of (compact) open sets $\{Z_\partial(e) : e \in T^1\}$, so it is enough to show that $\varphi^{-1}(Z_\partial(e))$ is open in $\partial T_+$ for all $e \in T^1$. Fix $e \in T^1$. We have
	\begin{align*}
		\varphi^{-1}(Z_\partial(e)) &= \{[\mu]_+ : \mu \in T_+^\infty,\ [\mu] \in Z_\partial(e)\} \\
		&= \{[\mu]_+ : \mu \in T_+^\infty,\ \exists \lambda \in s(e)T^\infty_r\ \text{such that}\ [\mu] = [\lambda]\ \text{and}\ \lambda_1 \neq \overline{e}.\}
	\end{align*}
	Now recall that $[\mu] = [\lambda]$ means that there are some $m,n \in \N$ and edges $e_i \in T^1$ for $1 \leq i \leq n$ such that $\lambda = e_1 e_2 \dots e_n \sigma^m(\mu)$. Note that we can write $\sigma^m(\mu) = e_{n+1} \sigma^{m+1}(\mu)$ where $e_{n+1} \in T_+^1$, and so $\lambda$ has the form 
	\[
	\lambda = e_1 e_2 \dots e_n e_{n+1} \sigma^{m+1}(\mu). 
	\]
	In particular, relabelling $n+1$ as $n$, we can assume without loss of generality that $n \geq 1$ and that $e_n \in T_+^1$. Moreover, since $\lambda$ is a reduced infinite path, we get that $\alpha := e_1 e_2 \dots e_n$ is also reduced. Note also that $\mu' := \sigma^{m+1}(\mu)$ is an infinite directed path. So we can write
	\begin{align*}
		&\varphi^{-1}(Z_\partial(e))\\ 
		&\qquad= \{[\mu']_+ : \mu' \in T_+^\infty,\ \exists \alpha = e_1 e_2 \dots e_n \in s(e)T^*_r\ \text{such that}\\
        &\hspace{15em} n \geq 1, \ e_1 \neq \overline{e},\ e_n \in T_+^1\ \text{and}\ \mu' \in s(\alpha)T_+^\infty\} \\
		&\qquad= \bigcup_{\substack{\alpha = e_1 \dots e_n \in s(e)T^*_r, \\ n \geq 1,\ e_1 \neq \overline{e},\ e_n \in T_+^1}} Z_\partial(s(\alpha)),
	\end{align*}
	which implies that $\varphi^{-1}(Z_\partial(e))$ is open. Hence $\varphi$ is continuous, so we can apply Lemma \ref{lemma:subspace amenability} to conclude that the action $G \curvearrowright \partial T_+$ is amenable.
\end{proof}

\bibliographystyle{plain}
\bibliography{gogk}

\begin{thebibliography}{10}

\bibitem{mangroves}
Eric Allender and Klaus-J\"{o}rn Lange.
\newblock {${\rm StUSPACE}(\log n)\subseteq{\rm DSPACE}(\log^2n/\log\log n)$}.
\newblock In {\em Algorithms and computation ({O}saka, 1996)}, volume 1178 of
  {\em Lecture Notes in Comput. Sci.}, pages 193--202. Springer, Berlin, 1996.

\bibitem{ADR}
C.~Anantharaman-Delaroche and J.~Renault.
\newblock {\em Amenable groupoids}, volume~36 of {\em Monographies de
  L'Enseignement Math\'{e}matique [Monographs of L'Enseignement
  Math\'{e}matique]}.
\newblock L'Enseignement Math\'{e}matique, Geneva, 2000.

\bibitem{AD}
Claire Anantharaman-Delaroche.
\newblock Syst\`emes dynamiques non commutatifs et moyennabilit\'{e}.
\newblock {\em Math. Ann.}, 279(2):297--315, 1987.

\bibitem{AD2}
Claire Anantharaman-Delaroche.
\newblock Amenability and exactness for dynamical systems and their
  {$C^\ast$}-algebras.
\newblock {\em Trans. Amer. Math. Soc.}, 354(10):4153--4178, 2002.

\bibitem{Bass}
Hyman Bass.
\newblock Covering theory for graphs of groups.
\newblock {\em J. Pure Appl. Algebra}, 89(1-2):3--47, 1993.

\bibitem{BPRS}
Teresa Bates, David Pask, Iain Raeburn, and Wojciech Szyma\'{n}ski.
\newblock The {$C^*$}-algebras of row-finite graphs.
\newblock {\em New York J. Math.}, 6:307--324, 2000.

\bibitem{Brown-Ozawa}
Nathanial~Patrick Brown and Narutaka Ozawa.
\newblock {\em {$C^*$}-Algebras and Finite-Dimensional Approximations},
  volume~88.
\newblock American Mathematical Soc., 2008.

\bibitem{BMPST}
Nathan Brownlowe, Alexander Mundey, David Pask, Jack Spielberg, and Anne
  Thomas.
\newblock {$C^*$}-algebras associated to graphs of groups.
\newblock {\em Adv. Math.}, 316:114--186, 2017.

\bibitem{BEW}
Alcides Buss, Siegfried Echterhoff, and Rufus Willett.
\newblock Amenability and weak containment for actions of locally compact
  groups on {$C^*$}-algebras.
\newblock {\em Mem. Amer. Math. Soc.}, to appear.

\bibitem{CLM}
Gunther Cornelissen, Oliver Lorscheid, and Matilde Marcolli.
\newblock On the {$K$}-theory of graph {$C^*$}-algebras.
\newblock {\em Acta Appl. Math.}, 102(1):57--69, 2008.

\bibitem{CEL}
Joachim Cuntz, Siegfried Echterhoff, and Xin Li.
\newblock On the {$K$}-theory of crossed products by automorphic semigroup
  actions.
\newblock {\em Q. J. Math.}, 64(3):747--784, 2013.

\bibitem{dlHarpe-Preaux}
Pierre de~la Harpe and Jean-Philippe Pr\'{e}aux.
\newblock {$C^*$}-simple groups: amalgamated free products, {HNN} extensions,
  and fundamental groups of 3-manifolds.
\newblock {\em J. Topol. Anal.}, 3(4):451--489, 2011.

\bibitem{multitrees}
George~W Furnas and Jeff Zacks.
\newblock Multitrees: enriching and reusing hierarchical structure.
\newblock In {\em Proceedings of the SIGCHI conference on Human factors in
  computing systems}, pages 330--336, 1994.

\bibitem{Julg-Valette}
Pierre Julg and Alain Valette.
\newblock {$K$}-theoretic amenability for {${\rm SL}\sb{2}({\bf Q}\sb{p})$},
  and the action on the associated tree.
\newblock {\em J. Funct. Anal.}, 58(2):194--215, 1984.

\bibitem{Kumjian-Pask}
Alex Kumjian and David Pask.
\newblock {$C^*$}-algebras of directed graphs and group actions.
\newblock {\em Ergodic Theory Dynam. Systems}, 19(6):1503--1519, 1999.

\bibitem{Miller-PhD}
A~Miller.
\newblock {\em K-theory for {\'e}tale groupoid C*-algebras via groupoid
  correspondences and spectral sequences}.
\newblock PhD thesis, Queen Mary University of London, 2022.

\bibitem{Mundey-Rennie}
Alexander Mundey and Adam Rennie.
\newblock A {C}untz-{P}imsner model for the {$C^*$}-algebra of a graph of
  groups.
\newblock {\em J. Math. Anal. Appl.}, 496(2):Paper No. 124838, 36, 2021.

\bibitem{Okayasu}
Rui Okayasu.
\newblock {$C^*$}-algebras associated with the fundamental groups of graphs of
  groups.
\newblock {\em Math. Scand.}, 97(1):49--72, 2005.

\bibitem{Orfanos}
Stefanos Orfanos.
\newblock Generalized {B}unce-{D}eddens algebras.
\newblock {\em Proc. Amer. Math. Soc.}, 138(1):299--308, 2010.

\bibitem{Pimsner}
Mihai~V. Pimsner.
\newblock {$KK$}-groups of crossed products by groups acting on trees.
\newblock {\em Invent. Math.}, 86(3):603--634, 1986.

\bibitem{RaeSzy}
Iain Raeburn and Wojciech Szyma\'{n}ski.
\newblock Cuntz-{K}rieger algebras of infinite graphs and matrices.
\newblock {\em Trans. Amer. Math. Soc.}, 356(1):39--59, 2004.

\bibitem{Robertson}
Guyan Robertson.
\newblock Boundary operator algebras for free uniform tree lattices.
\newblock {\em Houston J. Math.}, 31(3):913--935, 2005.

\bibitem{Serre}
Jean-Pierre Serre.
\newblock {\em Trees}.
\newblock Springer-Verlag, Berlin--Heidelberg--New York, 1980.

\bibitem{Spielberg}
John Spielberg.
\newblock Free-product groups, {C}untz-{K}rieger algebras, and covariant maps.
\newblock {\em Internat. J. Math.}, 2(4):457--476, 1991.

\end{thebibliography}

\end{document}